\newcommand \al{\alpha}
\newcommand \Aut{\operatorname{Aut}}
\newcommand \bs{\backslash}
\newcommand \C{{\mathbb C}}
\newcommand \CC{\mathcal C}
\newcommand \CE{\mathcal E}
\newcommand \CH{\mathcal H}
\newcommand \CK{\mathcal K}
\newcommand \CO{\mathcal O}
\newcommand \CT{\mathcal T}
\newcommand \Cone{\operatorname{Cone}}
\newcommand \e{\emph}
\newcommand \End{\operatorname{End}}
\newcommand \eps{\varepsilon}
\newcommand \fin{\mathrm{fin}}
\newcommand \Ga{\Gamma}
\newcommand \ga{\gamma}
\newcommand \gen{\mathrm{gen}}
\newcommand \h{\mathfrak h}
\newcommand \Hom{\operatorname{Hom}}
\newcommand \hyp{\mathrm{hyp}}
\newcommand \ind{\operatorname{ind}}
\newcommand \la{\lambda}
\newcommand \La{\Lambda}
\newcommand \lab{\operatorname{lab}}
\newcommand \Min{\operatorname{Min}}
\newcommand \mqed{\tag*\qedhere}
\newcommand \N{{\mathbb N}}
\newcommand \ol{\overline}
\newcommand \om{\omega}
\newcommand \Per{\operatorname{Per}}
\newcommand \Q{{\mathbb Q}}
\newcommand \R{{\mathbb R}}
\newcommand \SL{\operatorname{SL}}
\newcommand \sm{\smallsetminus}
\newcommand \tors{\mathrm{tors}}
\newcommand \tr{\operatorname{tr}}
\newcommand \trans{\mathrm{trans}}
\newcommand \vol{\operatorname{vol}}
\newcommand \Z{{\mathbb Z}}
\renewcommand \1{{\bf 1}}
\renewcommand \a{\mathfrak a}
\renewcommand \b{\mathfrak b}
\renewcommand \({\left(}
\renewcommand \){\right)}
\newcommand{\tto}[1]{\stackrel{#1}{\longrightarrow}} 
\renewcommand{\sp}[1]{\left\langle #1\right\rangle}
\newtheorem{theorem}{Theorem}[subsection]
\newtheorem{lemma}[theorem]{Lemma}
\newtheorem{corollary}[theorem]{Corollary}
\newtheorem{proposition}[theorem]{Proposition}
\theoremstyle{definition}
\newtheorem{example}[theorem]{Example}
\newtheorem{remark}[theorem]{Remark}
\newtheorem{definition}[theorem]{Definition}
\newtheorem{examples}[theorem]{Examples}
\begin{document}

\pagestyle{myheadings} \markright{BUILDING LATTICES}

\title{Building lattices and zeta functions}
\author{Anton Deitmar, Ming-Hsuan Kang \& Rupert McCallum\thanks{Funded by DFG grant DE 436/10-1}}
\date{}
\maketitle

{\bf Abstract:}  We give a Lefschetz formula for tree lattices and apply it to geometric zeta functions. We further generalize Bass's approach to Ihara zeta functions to the higher dimensional case of a building.

$ $

{\bf MSC: 51E24}, 11M36, 20E42, 20F65, 22D05, 22E40

$$ $$

\tableofcontents

\section*{Introduction}

In the nineteensixties, Yasutaka Ihara defined an analog of the Selberg zeta function for $p$-adic groups of split rank one. Later Jean-Pierre Serre observed that this zeta function can be defined for arbitrary finite graphs. 
It was an open question, whether the theory of the Ihara zeta function could be generalized to higher rank groups.
This question has been answered affirmatively for $p$-adic groups in 2014 in the paper \cite{HRpadgeom}.
The corresponding generalization to arbitrary buildings is given in the current paper.

Ihara provided in \cite{Ihara1} the only known link between geometric and arithmetic zeta functions by showing that 
the Ihara zeta function for a finite arithmetic quotient of a Bruhat-Tits tree equals the Hasse-Weil zeta function of the corresponding Shimura curve. 
Generally, the Bruhat-Tits building of a $p$-adic group is the analogue of the symmetric space of a semi-simple Lie group.
In the latter case a Lefschetz formula has been developed \cites{torus, geom, Juhl, Anan, HRLefschetz, Kord}, which expresses geometrical data of the geodesic flow and its monodromy in terms of Lie algebra cohomology, or more general, foliation cohomology.
This has been transferred to the case of $p$-adic groups in \cites{padgeom} and applied in \cite{HRpadgeom}.
The presentation in both papers is focused on the cohomological approach using the theory of reductive linear algebraic groups.

In the present paper, we give a much simpler approach which entirely works in geometric terms and doesn't use algebraic groups at all.
Significantly, it is formulated with the automorphism group of a building instead.
This makes the paper easier to read, but leaves us in the curious situation of a Lefschetz formula without cohomology.
We decided to call it a Lefschetz formula nevertheless because of its genealogy.
This formula is also more general than its predecessor since it allows lattices which are not of \emph{Lie-type}.
A lattice  $\Ga$ acting on a building $X$ is said to be of Lie-type if the building is the Bruhat-Tits building of a $p$-adic linear group $G$ and $\Ga'\subset G$ for a finite index subgroup $\Ga'$ of $\Ga$.
In general, the existence of non-Lie-type-lattices in this case is an open question.

In the first part of the paper we prepare the necessary theory of affine buildings and their automorphism groups, starting from geometry and moving towards group theory, as may be seen from the titles of the subsections.
In the second part we develop the Lefschetz formula and give an application to a several variable zeta function which we define by an infinite sum over geometric terms and use the Lefschetz formula to show that it actually is a rational function.
We get precise information on its singularities in terms of spectral data. This is the higher rank generalization of the celebrated Ihara zeta function \cites{Ihara1, Ihara2, Serre}.

The third part is concerned with a different approach to the zeta function which does not depend on the Lefschetz formula, but rather works like Bass's approach to the Ihara zeta function in the rank one case \cite{Bass}.
We clarify its relation with the Lefschetz formula zeta function of the previous section and  the Poincar\'e series.
Finally, we formulate a conjecture concerning the rationality of the zeta function in case of a non-cocompact lattice.
This problem has, in the rank one case, been solved in \cite{treelattice}. 

The results of this paper are applied in \cite{PGTB} to obtain prime geodesic theorems and, in their wake, results on class number asymptotics.
For similar results in real or $p$-adic settings, see \cites{HR,Sarnak,class,classNC}.
In order to illustrate what kind of class number asymptotics we mean, we give an example from the paper \cite{treelattice}:

{\bf Theorem.}(Class number asymptotics)
{\it 
Let $\CC$ be a smooth projective curve with field of constants $k$ of $q$ elements, fix a closed point $\infty$ of $\CC$ and let $A$ be the coordinate ring of the affine curve $\CC\sm\{\infty\}$.
Then there exist $\Delta\in\N$, $\eps>0$ such that
$$
 \sum_{\La: R(\La)=m}h(\La)
=
\Delta\1_{\Delta\Z}(m)q^m+O\((q-\eps)^m\)
$$
where the sum runs over all quadratic $A$-orders $\La$ and $h(\La)$ is the class number of $\La$.}

\section{Affine buildings}
In this section we fix notations and cite results from other sources needed here.
Main references are \cite{Bldgs} and \cite{Brid}.

\subsection{The automorphism group}
Let $X$ be a locally finite affine building.
For the purpose of this paper, the most general definition of a building will do.
So by a building we understand a polysimplicial complex which is the union of a given family of affine Coxeter complexes, called apartments, such that any two chambers (=cells of maximal dimension, which is fixed) are contained in a common apartment and for any two apartments $\a,\b$ containing chambers $C,D$ there is a unique isomorphism $\a\to\b$ fixing $C$ and $D$ point-wise.
A chamber is called \e{thin} if at every wall it has a unique neighbor chamber, it is called \e{thick}, if at each wall it neighbors at least two other chambers. 
The building is called thin or thick if all its chambers are.

Note that our definition includes buildings which are not Bruhat-Tits.
In higher dimensions, buildings tend to be of Bruhat-Tits type \cite{BruhatTits}.
For buildings of dimension at most two the situation is drastically different.
Indeed, 
Ballmann and Brin proved that every 2-dimensional simplicial complex in which the links of vertices are isomorphic to the flag complex of a finite projective plane has the structure of a building \cite{BallmannBrin}.

When speaking of ``points'' in $X$, we identify the complex $X$ with its geometric realization.
Note that the latter carries a topology as a CW-complex.
In this topology, a set is compact if and only if it is closed and contained in a finite union of chambers.
Note that an affine building is always contractible, see Section 14.4 of \cite{Garrett}.

\begin{definition}
Generally, there are different families of apartments which make $X$ a building, but there is a unique maximal family (Theorem 4.54 of \cite{Bldgs}).
In this paper, we will always choose the maximal family.
Let $\Aut(X)$ be the automorphism group of the building $X$, that is, the set of all automorphisms $g:X\to X$ of the complex $X$ which map apartments to apartments.
In the geometric realisation these are cellular maps which are affine on each cell. 
\end{definition}

\begin{example}
The Bruhat-Tits buliding of the group $\SL_3(\Q_2)$ is a union of 2-dimensional apartments. These consist of  chambers which in this case are equilateral triangles.
\begin{center}
\begin{tikzpicture}
\useasboundingbox(-5,-4)--(5,-4)--(5,4)--(-5,4);
\foreach \i in {-4,-3,-2,-1,0,1,2,3,4}
	\draw(-5,\i*.8)--(5,\i*.8);
\foreach \i in {-7,...,7}
	\draw(-2.5+\i,-4)--(2.5+\i,4);
\foreach \i in {-7,...,7}
	\draw(-2.5+\i,4)--(2.5+\i,-4);
\draw[fill,color=white](5,-4)--(12,-4)--(12,4)--(5,4);
\draw[fill,color=white](-5,-4)--(-12,-4)--(-12,4)--(-5,4);
\end{tikzpicture}
\end{center}
At each of these lines, one can switch to another apartment.
We refer to a building with this kind of apartments as a building of  type $A_2$.
\end{example}

\begin{definition}({\it $\Aut(X)$ as a topological group.})
The group $\Aut(X)$ will be equipped with the compact-open topology.
This is the topology generated by all sets of the form
$$
L(K,U)=\big\{ g\in \Aut(X): g(K)\subset U\big\},
$$
where $K$ is any compact subset of $X$ and $U$ any open subset.

In the case of a building, this topology can also be characterised by saying that a sequence $(g_n)$ in $\Aut(X)$ converges to $g\in \Aut(X)$ if and only if for every point $x\in X$ there exists an integer $n(x)$ such that for all $n\ge n(x)$ one has $g_n x=gx$.
Using this characterisation, it is easy to see that $\Aut(X)$ becomes a topological group, i.e., the composition $\Aut(X)\times \Aut(X)\to \Aut(X)$ is continuous.
\end{definition}

\begin{lemma}\label{lem1.1.4}
The topological group $\Aut(X)$ is a locally compact group.
This means that it is a Hausdorff space in which every point has a compact neighborhood.
A basis of the unit-neighborhoods is given by the family of compact open subgroups
$$
K_E=\big\{ g\in \Aut(X): ge=e\ \forall_{e\in E}\big\},
$$
where $E\subset X$ is any finite set.
\end{lemma}

\begin{proof}
For the length of this proof we write $G=\Aut(X)$.
Let $g_1,g_2\in G$ be two different elements.
Then there exists $x\in X$ with $g_1x\ne g_2x$.
Then the sets $U=\{g\in G: gx=g_1x\}$ and $V=\{g\in G: gx=g_2x\}$ are open, satisfy $U\cap V=\emptyset$ and $g_1\in U$, $g_2\in V$. So $G$ is a Hausdorff space.

We show that for $E\ne \emptyset$ the group $K_E$ is compact and open. Then for each $g\in G$ the coset $gK_E$ is a compact neighborhood of $g$, so $G$ is locally compact.
To see that $K_E$ is open, let $U_e$ be an open neighborhood of $e$ for $e\in E$.
Then the set of all $g\in G$ with $ge\in U_e$ is open.
As every $g\in G$ maps cells to cells and is an affine map on each cell, for each $e\in E$ there exists an open neighborhood $U_e$ of $e$ such that $ge\in U_e$ implies $ge=e$.
With that choice it follows that $K_E$ is open.
To see that it is also compact, let $(g_j)$ be a sequence in $K_E$.
There are only finitely many vertices $x$ of $X$ in distance $\le 1$ to the set $E$.
As every $g_j$ permutes these vertices, there is a subsequence $g^{(1)}_j$ of $g_j$ such that $g^{(1)}_jx=gx$ for each vertex $x$ in distance $\le 1$ and a fixed permutation $g$ of the set of these vertices.
Next there are only finitely many vertices in distance $\le 2$ to $E$ and so there is a subsequence $g^{(2)}_j$ of $g^{(1)}_j$ such that $g_j^{(2)}x=gx$ for all of those vertices and a fixed permutation $g$.
Iterating we get a permutation $g$ of the set of all vertices and a descending sequence of subsequences $g^{(k)}$ such that $g_j^{(k)}x=gx$ for every vertex $x$ with $d(x,E)\le k$.
Since $g$ locally agrees with suitable automorphisms $g_j^{(k)}$ it extends itself to an automorphism $g\in G$.
The diagonal subsequence $h_k=g_k^{(k)}$ converges point-wise to $g$. Hence $K_E$ is compact.
Any open neighborhood of the unit contains one of the $K_E$.
\end{proof}

\begin{remark}
By the lemma, $\Aut(X)$ is a locally compact group. So the methods of Harmonic Analysis as in \cite{HA2} apply to $\Aut(H)$. There is a unique up to scaling left $G$-invariant Radon measure $\mu$, called a \e{Haar measure}.
Further, the trace formula, as in Chapter 9 of \cite{HA2} applies to $\Aut(X)$.
\end{remark}

\begin{lemma}\label{lem1.1.3}
A closed subgroup $H\subset\Aut(X)$ is compact if and only if there exists $x\in X$ with $hx=x$ for every $h\in H$.
\end{lemma}

\begin{proof}
Let $H$ be compact and let $x$ be a vertex in $X$.
Then the orbit $Hx$ is compact. By the Bruhat-Tits fixed point theorem (Theorem 11.23 of \cite{Bldgs}) it follows that $H$ has a fixed point.
The converse direction follows from the compactness of the groups $K_E$ as shown in Lemma \ref{lem1.1.4}.
\end{proof}

Throughout the paper we will fix an open subgroup
$$
G\subset\Aut(X)
$$
of finite index.
For instance, $G$ could be the group of all orientation-preserving automorphisms or the group of all type preserving automorphisms, see Definition \ref{def2.4.2}.
Note that an open subgroup of a topological group is automatically closed (as its complement is a union of open left-cosets and hence open). So $G$ is a locally compact group as well.

A neighborhood basis of the unity in $G$ is given by a sequence $(K_{E_j})$ where $E_j$ is a sequence of finite sets of vertices, exhausting all of the vertices of $X$. Therefore the topological group $G$ is first countable.
Throughout, we choose a fixed  Haar measure on $G$.

Every compact subgroup $K\subset G$ fixes a point $x\in X$.
So every maximal compact subgroup is of the form $K_x$ for some $x$.

As $ X$ is affine, every apartment $\a$ carries the structure of a real affine space of fixed dimension $d=\dim X$.
It further carries a euclidean metric (induced by an inner product after fixing an origin), which is invariant under the action of its automorphism group $\Aut(\a)$.
This metric extends to a metric on the whole of $X$ which is euclidean on each apartment and is invariant under the automorphism group $\Aut(X)$.
With this metric, the space $X$ is a CAT(0) space (Theorem 11.16 of \cite{Bldgs}).
For more on CAT(0) spaces see also \cite{Brid}.

Let $G_\a$ be the stabilizer in $G$ of an apartment $\a$ and let $K_\a$ be its point-wise stabilizer.
Then $K_\a$ is compact but not open.
The group $G_\a$ acts on $\a$ through $\Aut(\a)$, so we get an exact sequence
$$
1\to K_\a\to G_\a\to \Aut(\a)\to E_\a\to 1.
$$
The group $E_\a$ can be finite or not, it can even coincide with $\Aut(\a)$.
Examples for these instances can be seen in dimension 1 already, when the building $X$ is a tree.
If $X$ is regular, i.e., each vertex has the same valency, then $E_\a$ is trivial.
On the other hand, here is an example with $E_\a=\Aut(\a)$: Let $X$ be the universal covering of the following finite graph:
\begin{center}
\begin{tikzpicture}
\draw(0,-1)--(0,1)--(2,0)--(0,-1)--(-2,0)--(0,1);
\draw(0,-1)node{$\bullet$};
\draw(0,1)node{$\bullet$};
\draw(2,0)node{$\bullet$};
\draw(-2,0)node{$\bullet$};
\end{tikzpicture}
\end{center}
Now in $X$, each vertex of valency 3 has one neighbor of valency 3 and two neighbors of valency 2, hence one can find an apartment, that is, an infinite line $\a=(\dots,n_{-1},n_0,n_1,\dots)$ in $X$, where each vertex $n_j$ is adjacent to $n_{j+1}$ such that the sequence of valencies $(\mathrm{val}(n_k))_{k\in\Z}$ is neither periodic nor symmetric at any point.
Then no nontrivial automorphism of the apartment extends to the building, hence $E_\a=\Aut(\a)$ in this case.

As part of the data of a building, we get a subgroup $W_\a$ of $\Aut(\a)$, the \e{Weyl group} which is a Coxeter group and makes $\a$ a Coxeter complex.
The group $W_\a$ is generated by finitely many euclidean reflections along the walls of a compact fundamental domain, a Weyl chamber.
The translations in $W_\a$ form a normal subgroup $T_\a$ which has finite index in $W_\a$.
The group $W_\a$ has finite index in $\Aut(\a)$ and forms a normal subgroup. The quotient group $\Aut(\a)/W_\a$ is isomorphic to the symmetry group $\Aut(C)$ of any Weyl chamber.
As an example consider the building of type $\tilde {\rm A}2$ where a chamber is an equilateral triangle, which has the group $\Per(3)$ of permutations in three letters as symmetry group.

\subsection{The boundary}\label{SecBound}
\begin{definition}
The building $X$ carries a metric $d$ which is euclidean on each apartment (Section 11.2 of \cite{Bldgs}).
We define a \e{geodesic curve} to be a map $\ga$ from an interval $I\subset\R$ to $X$ which satisfies $d(\ga(s),\ga(t))=|s-t|$ for all $s,t\in I$.
If $I=\R$ we speak of a \e{geodesic line}.
For every geodesic curve $\ga$ there exists an apartment $\a$ containing the image of $\ga$ and $\ga$ is a straight line in the affine space $\a$.
A \e{ray} in $ X$ is a geodesic curve $r$ defined on the interval $[0,\infty)$.

We call two geodesic curves $c,c'$ defined on intervals $I,I'$ \e{equivalent}, if there exists $t_0\in\R$ such that $I'=I+t_0$ and $c'(t+t_0)=c(t)$ holds for all $t\in I$. 
This clearly is an equivalence relation and a \e{geodesic} by definition is an equivalence class of geodesic curves.

Note that $X$ is a \e{unique geodesic space}, i.e., any two points $x,y\in X$ are joined by a unique geodesic from $x$ to $y$.
\end{definition}

\begin{definition}
Two rays $r,r'$ are \e{parallel}, if the distance $d(r(t),r'(t))$ remains bounded as $t\to\infty$.
Parallelity is an equivalence relation.
The \e{boundary} or \e{visibility boundary} $\partial  X$ of $ X$ is the set of all parallelity classes of rays in $ X$.
The elements of the boundary are also called \e{cusps}.
Fix a point $x_0$ in $ X$. Then any parallelity class contains a unique ray starting at $x_0$, so the boundary may as well be identified with the set of rays starting at $x_0$.
This justifies the term ``visibility boundary'', as it describes what you see from one point $x_0$.
For details see Section 11.8 of \cite{Bldgs}.
\end{definition}

\begin{definition}\label{Def1.2.1}
The boundary $\partial X$ can be equipped with the structure of a spherical building \cite{Bldgs}, Section 11.8.
In this way it also gets a topology, called the \e{building topology} on $\partial X$.
This topology, as the  structure of a spherical building on $\partial X$, are preserved by the action of $G$.
Note that $\partial X$ is not compact in this topology.
Note also, that there are actually various building structures on the boundary $\partial X$, all contained in a finest one, which usually is chosen.
For the purposes of this paper, however, we need to examine the situation a bit more closely.

Any vertex of the building is the intersection of at least $d$ different reflection hyperplanes in a given apartment, where $d$ is the dimension of the building.
We call a vertex a \e{special vertex}, if the set of reflection hyperplanes containing  it, meets every parallelity class of reflection hyperplanes of the ambient apartment.

In the picture, which depicts a building of the type $\tilde B_2$, the vertex $S$ is a special vertex, $P$ isn't.

\begin{center}
\begin{tikzpicture}
\foreach \x in {-2,-1,...,2} 
	\draw(-2,\x)--(2,\x);
\foreach \x in {-2,...,2} 
	\draw(\x,-2)--(\x,2);
\foreach \x in {-2,...,2}
	\draw(\x,-2)--(2,-\x);
\foreach \x in {-1,...,1}
	\draw(-2,\x)--(-\x,2);
\foreach \x in {-1,...,2}
	\draw(\x,-2)--(-2,\x);
\foreach \x in {-1,...,2}
	\draw(\x,2)--(2,\x);
\draw(-.15,-.4)node{$S$};
\draw(.5,.23)node{$P$};
\draw(0,0)node{$\bullet$};
\draw(.5,.5)node{$\bullet$};
\end{tikzpicture}
\end{center}

The choice of a vertex $v_0$ induces the structure of a spherical building on $\partial X$ in a way that for any chamber $C$, containing $v_0$ the set of rays from $v_0$ which run through $C$, form a spherical chamber $SC$.
\begin{center}
\begin{tikzpicture}[scale=.7]
\draw(-.68,1)--(.68,1);
\draw(0,.6)node{$C$};
\draw[thick](2,3)arc(60:120:4);
\draw(0,5)node{$SC$};
\draw(0,0)--(1.8,2.7);
\draw(-1.8,2.7)--(0,0);
\draw(0,-.4)node{$v_0$};
\draw(0,0)node{$\bullet$};
\end{tikzpicture}
\end{center}
This spherical structure on $\partial X$ equals the finest such structure if and only if the vertex $v_0$ is special.
\end{definition}

\begin{definition}
There is another topology on $\partial X$, which is coarser and makes $\partial X$ a compact Hausdorff space.
Again fix a base point $x_0$ and identify $\partial X$ with the set of rays emanating from $x_0$.
For $R>0$ let $S_R$ be the set of all $x\in X$ with $d(x,x_0)=R$, so it is the metric sphere of radius $R$ around $x_0$.
For an open set $U\subset S_R$ let $\partial U$ denote its \e{projection to the boundary}, i.e., the set of all rays passing through $U$.
We define the \e{cone topology} on $\partial X$ to be the topology generated by all these sets $\partial U$ for varying $R$.
For a treatment in the context of general CAT(0) spaces, see Chapter II.8 of \cite{Brid}.
\end{definition}

\begin{lemma}
The cone topology on $\partial X$ is Hausdorff, second countable and compact.
The cone topology is preserved by the action of $G$.
\end{lemma}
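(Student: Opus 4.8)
The plan is to identify $(\partial X,\text{cone topology})$ with an inverse limit of metric spheres, read off the three topological properties from that description, and then deduce $G$-invariance from the facts that $G$ acts by isometries and that the cone topology does not depend on the base point.

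First I would record that $X$ is a \e{proper} metric space: being assembled from finitely many isometry types of cells in a locally finite fashion, $X$ is complete and locally compact, and it is a geodesic space, so the Hopf--Rinow theorem applies and closed bounded subsets of $X$ are compact. In particular each metric sphere $S_R=S_R(x_0)$ (a closed, bounded set) is compact, and being a metric space it is second countable. For $R'\ge R$ define $\pi_{R',R}\colon S_{R'}\to S_R$ by sending $x$ to the point at distance $R$ from $x_0$ on the geodesic segment $[x_0,x]$; since in a CAT(0) space the geodesic joining two points varies continuously with the endpoints, $\pi_{R',R}$ is continuous, and clearly $\pi_{R'',R}=\pi_{R',R}\circ\pi_{R'',R'}$. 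Sending a ray $r$ emanating from $x_0$ to the family $(r(R))_{R>0}$ then gives a bijection $\partial X\to\varprojlim_R S_R$: it is injective since two distinct rays from $x_0$ differ on some sphere, and a compatible family $(x_R)_R$ is realized by the ray $\bigcup_R[x_0,x_R]$, which is an isometric copy of $[0,\infty)$ precisely because the segments $[x_0,x_R]$ are nested; completeness of $X$ guarantees this ray lies in $X$.

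Next I would match the two topologies. A ray $r$ from $x_0$ passes through an open set $U\subseteq S_R$ exactly when $r(R)\in U$, so the subbasic set $\partial U$ corresponds under the above bijection to $\pi_R^{-1}(U)$, where $\pi_R$ is the canonical projection of the inverse limit onto $S_R$; as $\pi_R^{-1}(U)=\pi_{R'}^{-1}(\pi_{R',R}^{-1}(U))$ for $R\le R'$, these sets already form a basis, namely the standard basis of the inverse limit topology. Since $\N$ is cofinal in $(0,\infty)$, one has $\varprojlim_R S_R=\varprojlim_{n\in\N}S_n$ as topological spaces, and the latter is the subset of $\prod_{n\in\N}S_n$ cut out by the conditions $\pi_{n+1,n}(x_{n+1})=x_n$, each of which is closed because $S_n$ is Hausdorff. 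A countable product of compact Hausdorff second countable spaces is compact Hausdorff and second countable, and all three properties are inherited by the closed subspace $\partial X$. This proves the first assertion.

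For the second assertion, $G$ acts on $X$ by isometries since the metric was built to be $G$-invariant; hence every $g\in G$ permutes geodesic rays, carries $S_R(x_0)$ isometrically onto $S_R(gx_0)$, and intertwines projections to the boundary, so $g$ is a homeomorphism from $(\partial X,\text{cone topology w.r.t.\ }x_0)$ onto $(\partial X,\text{cone topology w.r.t.\ }gx_0)$. Because the cone topology is independent of the choice of base point --- which is \cite{Brid}*{Chapter II.8} for general CAT(0) spaces --- these two topologies coincide, so $g$ is a self-homeomorphism of $(\partial X,\text{cone topology})$.

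The two points that I expect to require genuine care are the claim that $\partial X$ is \emph{all} of the inverse limit --- i.e.\ that every compatible family of sphere points comes from an honest geodesic ray --- which rests on properness and completeness of $X$, and the base-point independence invoked in the last step; the remainder is formal point-set topology.
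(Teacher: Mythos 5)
Your proof is correct and reaches the same goal by a closely related, though more structured, route. The paper proves compactness by a sequential diagonal argument: one observes that each sphere $S_N$ is compact, extracts subsequences converging on each $S_N$, and diagonalizes; combined with the second countability established first, this gives compactness. You repackage the same geometric content as the assertion that $(\partial X,\text{cone topology})$ is $\varprojlim_{n} S_n$, a closed subspace of $\prod_n S_n$, so that compactness comes from Tychonoff, and second countability and the Hausdorff property come for free from the countable product. This is cleaner, and it also makes explicit two points the paper leaves implicit: that $S_R$ is compact (you justify this via properness and Hopf--Rinow, which indeed holds here because a locally finite building is a complete, locally compact geodesic space) and that every compatible system of sphere points comes from an actual ray, which is where properness really enters. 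You also supply an argument for $G$-invariance (isometries plus base-point independence of the cone topology), which the paper's proof simply omits; your argument is the standard one and is correct. One small remark: your appeal to completeness when asserting $\bigcup_R[x_0,x_R]$ lies in $X$ is unnecessary --- once the segments are nested, their union is by construction a subset of $X$ isometric to $[0,\infty)$, with no limit point to worry about; completeness already did its work in Hopf--Rinow.
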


\begin{proof}
The cone topology is generated by $\partial U$ for $U\subset S_N$ belonging to a countable generating set of the topology on $S_N$, where $N$ varies in $\N$, so is second countable.
The Hausdorff property is clear, so compactness remains to be shown.
Let $(b_n)$ be a sequence in $\partial X$.
For each $N$, there is a subsequence of $(b_n)$ which is convergent in $S_N$.
By a diagonal argument one finds a subsequence which converges in every $S_N$, therefore it converges in $\partial X$.
\end{proof}

The group $G$ acts on the boundary $\partial X$.
A \e{minimal parabolic subgroup} $P=P_\CC$ of $G$ is by definition the point-wise stabilizer of a chamber $\CC$ in the spherical building structure of the boundary of $X$.
Likewise, one can define the parabolic $P_\CC$ to be the stabilizer of any cusp $b\in \CC$ in general position, i.e., $b\in \CC$ must not be a fixed point of a nontrivial automorphism of $\CC$.

\subsection{Cuspidal flow and horospheres}\label{sechoro}
\begin{definition}
Let $b\in\partial X$ be a cusp. For each $x\in X$ there exists exactly one ray $r_{b,x}:[0,\infty)\to X$ in the class $b$ which starts at $x$, i.e., $r_{b,x}(0)=x$.
We define a map $\phi_t:X\to X$ by
$$
\phi_t(x)=r_{b,x}(t).
$$
Then $\phi$ constitutes an action of the monoid $[0,\infty)$, called the \e{cuspidal flow}.
This means that we have $\phi_0=\mathrm{Id}$ and $\phi_{s+t}=\phi_s\phi_t$.
\end{definition}

\begin{lemma}
The flow $\phi$ is contracting, i.e.,
$$
d(\phi_tx,\phi_ty)\le d(x,y)
$$
holds for all $x,y\in X$ and all $t\ge 0$.

For any given $x,y\in X$ there exists $t_0=t_0(x,y)\ge 0$ such that $\phi_{t_0}x$ and $\phi_{t_0}y$ both lie in an apartment $\a$ with $b\in\partial\a$.
For every $t\ge t_0$ one then has
$$
d(\phi_tx,\phi_ty)=d(\phi_{t_0}x,\phi_{t_0}y).
$$
\end{lemma}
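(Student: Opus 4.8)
The plan is to handle the two assertions separately: the contraction estimate rests only on the CAT(0) geometry of $X$, while the eventual-isometry statement uses the apartment structure of the building.

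\emph{(Contraction.)} Fix $x,y\in X$ and put $f(t)=d(\phi_tx,\phi_ty)=d(r_{b,x}(t),r_{b,y}(t))$ for $t\ge0$. The curves $r_{b,x}$ and $r_{b,y}$ are unit-speed geodesic rays in the CAT(0) space $X$, so by the convexity of the metric in CAT(0) spaces — applied to the restrictions of the two rays to an arbitrary compact subinterval, reparametrised affinely — the function $f$ is convex on $[0,\infty)$. Moreover $r_{b,x}$ and $r_{b,y}$ both represent the cusp $b$, hence are parallel, so $f$ is bounded. A convex bounded function on $[0,\infty)$ is non-increasing: if $f(t_1)<f(t_2)$ for some $t_1<t_2$, convexity forces $f(t)\to\infty$ as $t\to\infty$, a contradiction. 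Hence $f(t)\le f(0)=d(x,y)$ for all $t\ge0$, which is the first claim.

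\emph{(Eventual isometry.)} The rays $r_{b,x}$ and $r_{b,y}$ are asymptotic, both running toward the same point $b$ of the spherical building $\partial X$. By the structure theory of affine buildings — two asymptotic rays (or, in general position, sectors pointing to the same chamber at infinity) contain subrays lying in a common apartment — there is $t_0=t_0(x,y)\ge0$ and an apartment $\a$ with $r_{b,x}\big([t_0,\infty)\big)\cup r_{b,y}\big([t_0,\infty)\big)\subset\a$. In particular $\a$ contains a subray of $r_{b,x}$, which represents $b$, so $b\in\partial\a$, and $\phi_{t_0}x=r_{b,x}(t_0)$ and $\phi_{t_0}y=r_{b,y}(t_0)$ both lie in $\a$. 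Next I would note that if a point $z$ lies in an apartment $\a$ with $b\in\partial\a$ then the whole ray $r_{b,z}$ stays in $\a$: in the Euclidean affine space $\a$ the cusp $b$ is represented by a direction $v$, the ray issued from $z$ in direction $v$ is a geodesic ray of $X$ parallel to the representatives of $b$ inside $\a$, hence lies in the class $b$, and by uniqueness of that ray it equals $r_{b,z}$. Applying this to $z=\phi_{t_0}x$ and $z=\phi_{t_0}y$ and using $\phi_t=\phi_{t-t_0}\circ\phi_{t_0}$, we obtain, for every $t\ge t_0$, that $\phi_tx=\phi_{t_0}x+(t-t_0)v$ and $\phi_ty=\phi_{t_0}y+(t-t_0)v$ both lie in $\a$; these two points differ by the vector $\phi_{t_0}x-\phi_{t_0}y$, which is independent of $t$, so $d(\phi_tx,\phi_ty)=d(\phi_{t_0}x,\phi_{t_0}y)$.

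The routine steps are the CAT(0) convexity estimate and the translation computation inside a single apartment. The main obstacle is the production of the common apartment for the two subrays: this is the point where the building axioms, rather than mere CAT(0) geometry, are needed, and some care is required since $X$ need not be of Bruhat-Tits type and the cusp $b$ need not be in general position — one should then run the argument with the sector-faces pointing to the (possibly lower-dimensional) simplex of $\partial X$ having $b$ in its interior, and verify that $r_{b,x}$ and $r_{b,y}$ really do enter the chosen subsector-faces.
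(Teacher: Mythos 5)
Your proof is correct, but both halves take a genuinely different route from the paper. For the contraction estimate, the paper proceeds by induction on the combinatorial distance from $y$ to a fixed apartment through $x$ with $b$ in its boundary, whereas you invoke the CAT(0) convexity of $t\mapsto d(c_1(t),c_2(t))$ together with the boundedness coming from parallelity; your argument is cleaner, more conceptual, and works verbatim in any CAT(0) space without reference to the cellular structure, though it does quietly cite the convexity theorem (Bridson--Haefliger II.2.2) that the paper avoids. For the eventual-isometry statement, the paper gives a self-contained combinatorial argument: it defines $Y$ to be the set of points whose $b$-ray eventually enters the union $A$ of apartments through $x$ containing $b$ at infinity, observes that $Y$ is a subcomplex in which every chamber has full valency, and concludes $Y=X$ by connectivity; you instead outsource the key step to the standard structural fact that two asymptotic rays in an affine building have subrays lying in a common apartment. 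Your route is shorter and highlights exactly where building geometry (as opposed to CAT(0) geometry) enters, and you correctly flag the subtlety when $b$ is not in general position; but it presupposes a nontrivial external result, whereas the paper's subcomplex argument establishes the needed common-apartment statement from scratch, which is presumably why the authors chose it given that they explicitly allow non-Bruhat-Tits buildings. Both arguments are sound; be sure, if you keep your version, to give a precise citation for the common-apartment lemma in the generality you need (lower-dimensional faces at infinity, not just chambers).
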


\begin{proof}
We show the second assertion first.
Fix a point $x\in X$ and let $A$ be the union of all  apartments $\a$ containing $x$ with $b\in\partial\a$.
Let $Y\subset X$ be the set of all $y\in X$, whose unique ray $r_{b,y}$ eventually enter $A$, i.e., a given point $y\in X$ lies in $Y$ if and only if there exists $t_0$ such that $r_{b,y}(t)\in A$ for all $t\ge t_0$.
One notes that $Y$ is a sub-complex of $X$ and that each chamber in $Y$ neighbors the same number of chambers in $Y$ as in $X$.
Therefore $Y=X$ and the second claim follows.

The first part follows similarly by an induction on the distance of $y$ to the fixed apartment $\a$.
If $y\in\a$, then the rays of $x$ and $y$ are parallel in the apartment. If $y$ lies in a chamber adjacent to a chamber which satisfies the claim, it also follows for $y$.
\end{proof}

\begin{definition}
Fix a cusp $b\in\partial X$.
For each point $x\in X$, there exists exactly one ray $r_{b,x}\in b$ emanating at $x$.
We define a relation on $X$ by
$$
x\sim y\quad\Leftrightarrow\quad 
\left\{\begin{array}{c}
\text{there exists }t>0 \text{ such that }\\
\phi_t(x)=
\phi_t(y)\text{ or}\\\text{the line }\ol{\phi_t(x),\phi_t(y)}\text{ is perpendicular to } r_{b,\phi_t(x)}\end{array}\right\}.
$$
We claim that this constitutes an equivalence relation.
For this we note that orthogonality is preserved under the flow $\phi_t$ and since also the rays are preserved, it follows that $x\sim y$ is equivalent to $\phi_tx\sim\phi_ty$ for any $t\ge 0$.

As the line $\ol{x,y}$ equals $\ol{y,x}$, one sees that $x\sim y$ implies $y\sim x$.
Next if $x\sim y$ and $y\sim z$, then either all three lie in one apartment in which case it easily follows that $x\sim z$ or they don't lie in the same apartment.
So by the last lemma it follows that $\sim$ is an equivalence relation if it is so on a single apartment, which is clear.
\end{definition}

\begin{definition}
An equivalence class $[x]_b$ is called a \e{$b$-horosphere}.
If $b$ lies in some cell $\CC\subset \partial X$ and an element $p\in P_\CC$ preserves one horosphere, it preserves every horosphere.
Let $H_b\subset P_\CC$ be the subgroup of those elements which preserve horospheres.
See \cite{Brid} for a definition of horospheres for general CAT(0) spaces.
\end{definition}

\begin{definition}
Let $\a$ be an apartment containing a ray which belongs to $b$, or in other words, assume that $b$ lies in the boundary of $\a$.
Let $H_\a$ be the stabilizer of the apartment $\a$ in $H_b$.
Then $H_\a$ acts on $\a$.
Depending on the position of the cusp, it may happen that $H_\a$ acts trivially on $\a$.
This happens, if any ray in $b$ which lies in $\a$, has an irrational angle with every translation vector in $T_\a$.
We call the cusp $b\in\partial X$ an \e{irrational cusp} if $b$ is in general position in a cell $\CC$ and $H_\a$ acts trivially on $\a$.
\end{definition}

\begin{lemma}\label{lem1.3.4}
\begin{enumerate}[\rm (a)]
\item A spherical cell $\CC\subset\partial X$ contains irrational cusps if and only if it is a chamber.
\item
Let $P=P_\CC$ and let $b$ be a cusp in $\CC$.
Every element of $P$ maps $b$-horospheres to $b$-horospheres.
\item
If $b$ is irrational, the group $H_b$ does not depend on the choice of $b$ inside the chamber $\CC$, so we denote it by $H_\CC$ instead.
In this case, $H_\CC$ is a countable union of compact subgroups, which are relatively open in $H_\CC$.
\end{enumerate}
\end{lemma}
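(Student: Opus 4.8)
The plan is to prove the three assertions separately; the common tools are the contraction lemma of \S\ref{sechoro} and the fact that on an apartment containing a ray in the class of a cusp $c$ the Busemann function $\beta_c$ is affine with gradient $-\hat c$, so that finitely many $\beta_{c}$ with $c$ ranging over a chamber of $\partial X$ separate the points of that apartment. For (a), fix an apartment $\a$ with $\CC\subset\partial\a$ and identify $\partial\a$ with the unit sphere $S$ of the model euclidean space $V$, so that a cusp of $\CC$ is a unit vector in the open spherical simplex $\CC\subset S$. If $\CC$ is a chamber, then $\mathrm{span}\,\CC=V$, and a cusp $b\in\CC$ can fail to be irrational only if $\hat b$ is fixed by a nontrivial element of the finite group $\Aut(\CC)$, or $\hat b\perp v$ for some nonzero $v\in T_\a$ --- in the second case the translation $t_v$ fixes $\CC$ pointwise at infinity, preserves every $b$-horosphere, yet acts nontrivially on $\a$, so $H_\a\neq 1$. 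The bad cusps thus lie in a countable union of proper great subspheres of $S$, which cannot exhaust the open set $\CC$; hence $\CC$ contains irrational cusps. If instead $\CC$ is a proper face it lies in a wall, $\mathrm{span}\,\CC$ is a proper rational subspace, $T_\a\cap(\mathrm{span}\,\CC)^\perp$ is a full-rank lattice of the nonzero space $(\mathrm{span}\,\CC)^\perp$, and any nonzero $v$ there is orthogonal to every direction of $\CC$, so $t_v\in H_\a$ acts nontrivially on $\a$; hence no cusp of $\CC$ is irrational.

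Part (b) is immediate: an element $p\in P_\CC$ fixes $\CC$, hence fixes the cusp $b$, hence carries the unique ray $r_{b,x}$ in the class $b$ from $x$ to the unique ray in the class $b$ from $px$, so $p\,r_{b,x}=r_{b,px}$ and $p$ commutes with the flow $\phi$; being an isometry, $p$ also preserves perpendicularity of segments to these rays, hence preserves the equivalence relation defining horospheres, so $p[x]_b=[px]_b$. For the independence statement in (c), put $s_c(p)=\beta_c(px)-\beta_c(x)$ for $c\in\CC$; since $\beta_c(\phi_t(px))-\beta_c(\phi_t(x))$ is independent of $t\ge 0$ and, for $t$ large, is evaluated in one apartment with $\CC$ in its boundary where $\beta_c$ is affine, $s_c(p)$ does not depend on $x$, and $s_c\colon P_\CC\to\R$ is a homomorphism with kernel $H_c$. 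The $s_c$ assemble into a homomorphism $\lambda\colon P_\CC\to V$ with $s_c=-\langle\lambda(\,\cdot\,),\hat c\rangle$ and $H_b=\lambda^{-1}(\hat b^\perp)$; pushing a vertex far towards $\CC$ by the flow and applying $p$ realizes $\lambda(p)$ as a difference of two vertices of an apartment, so $\lambda(P_\CC)$ is discrete. For $b$ irrational $\hat b$ is orthogonal to no nonzero vector of $\lambda(P_\CC)$, whence $H_b=\lambda^{-1}(0)=\ker\lambda$, a group that no longer refers to $b$; denote it $H_\CC$.

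To finish (c): as $\ker\lambda$ --- equivalently, as the stabilizer of any fixed horosphere --- $H_\CC$ is closed in $G$, hence totally disconnected, locally compact and second countable. For each of the countably many vertices $v$ of $X$, the subgroup $H_\CC\cap K_v$ is compact, being a closed subgroup of the maximal compact $K_v$, and open in $H_\CC$ because $K_v$ is open in $G$; so it remains to show these subgroups cover $H_\CC$, i.e.\ that every $p\in H_\CC$ fixes a vertex. For this I would show $p$ fixes pointwise an entire sub-sector towards $\CC$. Choose $\a$ with $\CC\subset\partial\a$; then $\CC\subset\partial(p\a)$ as well, so $\a$ and $p\a$ contain a common sub-sector $Q$ pointing to $\CC$. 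The retraction $\rho$ of $X$ onto $\a$ centered at $\CC$ fixes each $\beta_c$ ($c\in\CC$) and restricts to an isomorphism of $p\a$ onto $\a$; for $q\in Q$ one then has $\beta_c(\rho(pq))=\beta_c(pq)=\beta_c(q)$ for all $c\in\CC$ --- the last equality because $p\in\ker\lambda$ --- so $\rho(pq)=q=\rho(q)$ since the $\beta_c$ separate the points of $\a$, and $pq=q$ by injectivity of $\rho$ on $p\a$. Hence $\langle p\rangle$ fixes a vertex of $Q$, so $\overline{\langle p\rangle}$ is compact and $p$ lies in some $K_v$. (Incidentally this identifies $H_\CC$ with the subgroup of $P_\CC$ consisting of those elements that fix some sub-sector towards $\CC$ pointwise, a description in which $b$ plays no role.)

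The genuinely delicate step is the last one: that an element of $H_\CC$ fixes a whole sub-sector towards $\CC$. It combines three standard facts from building theory --- that two apartments asymptotic to the same chamber at infinity share a sub-sector, that the retraction centered at a cusp fixes the relevant Busemann functions and is an isomorphism on asymptotic apartments, and that (via the contraction lemma) everything can be pushed into a single apartment --- and these must be assembled with care; together they are the geometric substitute for the Levi decomposition $P_\CC=M_\CC A_\CC U_\CC$ of the reductive-group approach, with $H_\CC$ in the part of $M_\CC U_\CC$. The genericity count in (a) and the construction of $\lambda$ in (c) are, by comparison, routine.
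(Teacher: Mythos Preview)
Your arguments for (a) and (b) are essentially the paper's. For (c) you take a genuinely different route. The paper fixes a ray $r$ in the class $b$, sets $H_b(j)=\mathrm{Stab}_{H_b}(r(j))$, and shows directly that $H_b=\bigcup_j H_b(j)$: given $h\in H_b$, one uses the flow to push $r(0)$ and $h(r(0))$ into a common apartment, observes that $h$ acts there as a translation followed by an orthogonal map, and uses irrationality of $b$ to kill the translation part, forcing $h(r(j))=r(j)$ for large $j$; independence of $b$ then follows because the residual linear part fixes the chamber $\CC$ at infinity pointwise, hence is trivial. You instead package the Busemann shifts $s_c$ into a homomorphism $\lambda\colon P_\CC\to V$ with discrete image, identify $H_b=\lambda^{-1}(\hat b^\perp)$, and read off $H_b=\ker\lambda$ from irrationality --- this is a vector-valued refinement of the scalar homomorphism $t\colon P\to\R$ the paper builds later in Lemma~\ref{lem1.16}. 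Your route buys a clean $b$-free description of $H_\CC$ (as $\ker\lambda$, equivalently as the pointwise fixer of a sub-sector towards $\CC$) and makes the link to the Levi picture transparent; the paper's route is more hands-on, staying with the flow and avoiding retractions and Busemann machinery. One point to tighten: your discreteness claim for $\lambda(P_\CC)$ (``realizes $\lambda(p)$ as a difference of two vertices'') is really the statement $\lambda(p)=\rho(pv_0)-v_0$ for $\rho$ the retraction onto $\a$ centred at $\CC$ and $v_0$ a vertex of $\a$, so that $\lambda(p)$ lies in the discrete set of vertex differences of $\a$; the cuspidal flow itself need not carry vertices to vertices, so the retraction, which you already invoke in the final step, is the right tool here too.
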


\begin{proof}
(a) If $\CC$ is a chamber, then it contains irrational cusps, since then only countably many cusps in $\CC$ can be rational.
On the other hand, if $\CC$ is not a chamber, it lies in the boundary of some hyper-surface $\h\subset\a$ the reflection along which lies in the Weyl group $W_\a$. 
As the Weyl group is affine, it also contains a reflection along some hyper-surface $\h'$ parallel to $\h$ and the composition along those two reflections is a translation $T$ which is perpendicular to the cusp $b$, which therefore is rational.

(b) As each element of the group $P$ preserves the cusp $b$, it maps horospheres to horospheres.
For (c) suppose that $b$ is irrational and let $r:[0,\infty)\to X$ be the parametrization of a ray in $b$.
For $j\in\N$ let $H_b(j)$ be the stabilizer of the point $r(j)$ in $H$.
Then $H_b(j)\subset H_b(j+1)$, each $H_b(j)$ is compact and open in $H_b$ and we claim that
\begin{align*}
H_b=\bigcup_{j=1}^\infty H_b(j).
\end{align*}
To see this, let $h\in H_b$. The horocyclic flow pulls $r(0)$ and $h(r(0))$ eventually into the apartment $\a$.
As the action of $h$ commutes with the horocyclic flow, it follows that there exists $j\in\N$ such that $\phi_j(h(r(0)))=h(r(j))$ lies in $\a$. 
After increasing $j$ if necessary, a small neighborhood of $r(j)$ in $\a$ is mapped to a small neighborhood of $h(r(j))$ and the map $h$ on this neighborhood is a translation followed by an orthogonal map. As such, it extends to the intersection $\a\cap [r(j)]_b$ of $\a$ with the horosphere through $r(j)$. However, no such translation will preserve the horosphere, as $b$ is irrational.
Therefore, the translation is trivial and hence $h(r(j))=r(j)$.
This implies the last claim. Finally, for the independence on $b$, note that $h\in H_b$ with $h(r(j))=r(j)$ must acts linearly on $\a\cap [r(j)]_b$ and this linear action is projected to the boundary $\partial\a$, but, as $P$ is the point-wise stabilizer of $\CC$, which is a chamber, this action is trivial.
\end{proof}

\subsection{Hyperbolic elements}
\begin{definition}
Let $g\in G$. Recall the \e{displacement function} 
$$
d_g:X\to [0,\infty),\quad x\mapsto d_g(x)=d(gx,x).
$$
Recall from Chapter II.6 of \cite{Brid}, that $g$ is called \e{hyperbolic}, if $d_g$ attains a strictly positive minimum.
This definition is taken from the context of general CAT(0) spaces.
For a building, there is the extra information that $g$ preserves the cellular structure of $X$, which implies that $g$ is hyperbolic if and only if it is fixed-point-free, as we show in the following lemma.
\end{definition}

\begin{lemma}
An element $g\in G$ is hyperbolic if and only if $gx\ne x$ holds for every point $x\in X$.
\end{lemma}

\begin{proof}
The ``only if'' direction is clear.
For the ``if'' direction let $g\in G$ be without fixed point in $X$. We need to show that $g$ is hyperbolic.

Let $C$ be any cell of $X$.
As $C$ is compact and convex, the image $d_g(C)\subset\R$ is a compact interval. 
Let $T>0$ and let $\CC(T)$ be the set of all cells $C$ such that $d_g(C)\cap [0,T]\ne\emptyset$.
For $C\in\CC(T)$, the cells $C$ and $gC$ lie in a common apartment and they lie in distance $\le T$.
Up to isomorphy, there are only finitely many pairs of cells $C,D$ in distance $\le T$.
The map $g:C\to gC$, $x\mapsto gx$ is affine and maps the vertices of $C$ bijectively to the vertices of $gC$.
For given $C$, there are only finitely many maps of that type.
This implies that for given $g\in G$, the set $d_g(X)\cap [0,T]$ is a union of finitely many compact intervals.
Therefore the infimum $\inf_{x\in X}d_g(x)$ is indeed a minimum.
It immediately follows that this minimum is non-zero, as $g$ has no fixed points.
\end{proof}

\begin{definition}
Let $G_\hyp$ denote the set of all hyperbolic elements of $G$.
An element $g\in G$ which is not equal to the unit element but does fix a point in $X$, is called an \e{elliptic} element.
In any case we define the \e{length} of $g$ as 
$$
l(g)=\inf_{x\in X}d_g(x)=\inf_{x\in X} d(x,gx).
$$
Let $\Min(g)$ denote the set of all $x\in X$ at which the infimum is attained.
\end{definition}

\begin{proposition}\label{prop1.4.4}
If $g\in G_\hyp$, then the \e{distance minimizing set} $\Min(g)\subset X$ is a non-empty convex subset of $X$, which is the union of parallel infinite lines and $g$ acts of $\Min(g)$ by translating along these lines by the amount $l(g)$.
\end{proposition}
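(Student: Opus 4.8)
The statement is the classical structure theorem for hyperbolic isometries of a \(\mathrm{CAT}(0)\) space, so the plan is to adapt the argument of \cite{Brid}*{Chapter II.6}, using only that \(X\) is \(\mathrm{CAT}(0)\) and that, for a building, \(g\in G_\hyp\) means \(gx\ne x\) for every \(x\in X\).

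I would begin with the soft parts. Non-emptiness of \(\Min(g)\) is immediate from the definition of hyperbolicity. For convexity, I would first record that the displacement function \(d_g\) is convex: for a geodesic \(\sigma\colon[0,1]\to X\) the path \(g\circ\sigma\) is again a geodesic (as \(g\) is an isometry), and in a \(\mathrm{CAT}(0)\) space \(t\mapsto d\big(\sigma(t),g\sigma(t)\big)\) is convex. Since \(l(g)=\inf d_g\), the set \(\Min(g)=\{x:d_g(x)\le l(g)\}\) is a sublevel set of a convex function, hence convex and closed. Finally \(d_g(gx)=d(g^2x,gx)=d(gx,x)=d_g(x)\), so \(\Min(g)\) is \(g\)-invariant; thus for \(x\in\Min(g)\) all orbit points \(g^nx\), \(n\in\Z\), lie in \(\Min(g)\), and by convexity so do the geodesic segments \([g^nx,g^{n+1}x]\).

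The crucial step — and the one I expect to be the main obstacle — is to show that for \(x\in\Min(g)\) the broken path \(c_x\colon\R\to X\) obtained by concatenating the segments \([g^nx,g^{n+1}x]\) (parametrised by arclength with \(c_x(n\,l(g))=g^nx\)) is an honest geodesic line. I would do this by a first–variation argument. Because \(x\) minimises the convex function \(d_g\), the one–sided derivative of \(d_g\) along any geodesic starting at \(x\) is non-negative. Apply this to the geodesic \(\sigma\) starting at \(x\) in the direction of \(g^{-1}x\); then \(g\circ\sigma\) starts at \(gx\) in the direction of \(x\), because \(g\) carries \([x,g^{-1}x]\) onto \([gx,x]\), and \(d_g(\sigma(t))=d(\sigma(t),g\sigma(t))\). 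Writing \(\angle_x(\cdot,\cdot)\) for the Alexandrov angle at \(x\) between the indicated geodesics, the first–variation formula in the \(\mathrm{CAT}(0)\) space \(X\) gives
\[
\left.\frac{d}{dt}\right|_{t=0^+}d_g(\sigma(t))\;=\;-\cos\angle_x(g^{-1}x,gx)\;-\;1,
\]
the second term being \(-\cos 0=-1\) since \(g\circ\sigma\) leaves \(gx\) straight towards \(x\). Non-negativity of the left side forces \(\angle_x(g^{-1}x,gx)=\pi\), and since the Alexandrov angle is bounded above by the Euclidean comparison angle this yields \(d(g^{-1}x,gx)=d(g^{-1}x,x)+d(x,gx)=2\,l(g)\), i.e.\ the geodesic \([g^{-1}x,gx]\) runs through \(x\). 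Conjugating by powers of \(g\), the geodesic \([g^{n-1}x,g^{n+1}x]\) runs through \(g^nx\) for every \(n\); hence \(c_x\) coincides with a geodesic near each of its break points, so it is a local geodesic, and local geodesics in a \(\mathrm{CAT}(0)\) space are geodesics \cite{Brid}. Thus \(c_x\) is a geodesic line, and \(c_x\subset\Min(g)\) by the previous paragraph.

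The conclusion then assembles as follows. Since \(g\) carries \([g^nx,g^{n+1}x]\) isometrically onto \([g^{n+1}x,g^{n+2}x]\) and \(g^nx\) to \(g^{n+1}x\), it maps \(c_x\) onto itself by the shift \(c_x(t)\mapsto c_x(t+l(g))\), i.e.\ \(g\) translates along \(c_x\) by \(l(g)\). Every \(y\in\Min(g)\) lies on the line \(c_y\subset\Min(g)\), so \(\Min(g)=\bigcup_{x\in\Min(g)}c_x\). Finally, for \(x,y\in\Min(g)\) the function \(t\mapsto d(c_x(t),c_y(t))\) is convex and, by the equivariance just noted, invariant under \(t\mapsto t+l(g)\); a convex periodic function on \(\R\) is constant, so \(c_x\) and \(c_y\) stay at constant distance and are in particular parallel. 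Hence \(\Min(g)\) is a non-empty convex union of pairwise parallel lines on which \(g\) acts by translation by \(l(g)\), as claimed.
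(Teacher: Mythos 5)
Your argument is correct, and establishes exactly the statement claimed. Note, though, that the paper's proof of Proposition~\ref{prop1.4.4} is a one-line citation of Theorem~II.6.8 in \cite{Brid}: the proposition is nothing more than the standard structure theorem for hyperbolic (``axial'') isometries of a complete $\mathrm{CAT}(0)$ space, already specialised to the remark that for cellular automorphisms hyperbolicity is the same as acting without fixed points. What you have done is supply a self-contained proof of the cited theorem. Your route to the key step (that the concatenation of the segments $[g^nx,g^{n+1}x]$ is a global geodesic) goes through the first-variation formula for the Alexandrov angle, forcing $\angle_x(g^{-1}x,gx)=\pi$ and hence $d(g^{-1}x,gx)=2\,l(g)$; the proof in \cite{Brid} reaches the same conclusion by a comparison-triangle/midpoint computation (if $m$ is the midpoint of $[x,gx]$ then $d(m,gm)\le\tfrac12 d(x,g^2x)\le l(g)$, with equality forced since $m\in X$), which is a touch more elementary since it avoids invoking the first-variation formula. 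Both are standard $\mathrm{CAT}(0)$ techniques and both yield the convexity of $\Min(g)$, the $g$-translated geodesic axes, and (via the ``convex periodic implies constant'' observation, or equivalently the flat strip theorem) the parallelism of those axes, so your proof is a valid replacement for the citation.
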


\begin{proof}
This follows from Theorem II.6.8 in \cite{Brid}.
\end{proof}

\begin{definition}
An element $g\in G_\hyp$ is called \e{generic}, if for $x\in \Min(g)$ the translation vector $gx-x$ is not parallel to any wall of any chamber.
We write $G^\gen$ for the set of generic elements.
\end{definition}

\begin{proposition}
If $g\in G_\hyp$ is generic, then the set $\Min(g)$ consists of exactly one apartment.
\end{proposition}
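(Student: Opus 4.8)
The plan is to identify one apartment $\a$ that must equal $\Min(g)$ --- namely the apartment containing an axis of $g$, which genericity pins down uniquely --- and then to check that $g$ acts on it by a pure translation. So I would fix $x\in\Min(g)$, set $v=gx-x$, and note by Proposition 1.4.4 that the straight line $\ell=\{x+tv:t\in\R\}$ lies in $\Min(g)$ and that $g$ translates $\ell$ by the nonzero vector $v$, with $|v|=l(g)$. Since $g$ is generic, $v$ is parallel to no wall, so $\ell$ lies inside no wall and meets chamber interiors; being a geodesic line, $\ell$ is contained in an apartment. It lies in a \emph{unique} one, call it $\a$: if $\a$ and $\a'$ both contained $\ell$, then $\a\cap\a'$ would be a convex subcomplex of $\a$ containing $\ell$, hence an intersection of half-apartments, but a half-apartment $\{f\ge 0\}$ containing $\ell$ forces the affine function $f$ to be bounded below, hence constant, along $\ell$, i.e. forces its bounding wall to be parallel to $v$ --- contradicting genericity; so $\a=\a'$.

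Next I would show $g$ stabilises $\a$: since $g$ maps apartments to apartments and $g\ell=\ell$, the apartment $g\a$ contains $\ell$, whence $g\a=\a$. Therefore $\phi:=g|_\a\in\Aut(\a)$ is a Euclidean isometry of $\a$ that translates $\ell$ by $v$; it fixes no point of $\a$ (a fixed point would be equidistant from all points of $\ell$), so it is a hyperbolic isometry of the Euclidean space $\a$, whose distance-minimising set is a coset of the fixed space of its linear part $A$, and $\ell\subseteq\Min(\phi)$ forces $Av=v$. Here genericity does the real work: $v$ being parallel to no wall says exactly that $v$ is a regular vector, and a regular vector has trivial stabiliser in the finite reflection group that is the linear part of $\Aut(\a)$, so $A=\mathrm{id}$. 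Thus $\phi$ is the translation $z\mapsto z+v$, the displacement $d_g$ is constant equal to $l(g)$ on all of $\a$, and $\a\subseteq\Min(g)$.

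For the reverse inclusion, take $y\in\Min(g)$; the line $\ell_y$ through $y$ in direction $v$ lies in $\Min(g)$ by Proposition 1.4.4 and is a generic line parallel to $\ell$, so $\ell$ and $\ell_y$ share their pair of endpoints in $\partial X$, which by genericity are interior points of opposite chambers of the spherical building $\partial X$. Since a pair of opposite chambers of a spherical building lies in a unique apartment of it, and an affine apartment is recovered from its boundary at infinity \cite{Bldgs}, the unique apartment through $\ell_y$ is again $\a$; hence $y\in\a$, and $\Min(g)\subseteq\a$. Combining the two inclusions gives $\Min(g)=\a$. The step I expect to be the main obstacle is the identification $A=\mathrm{id}$: it is immediate once $A$ is known to lie in the finite Weyl group --- for instance, for type-preserving automorphisms --- but controlling how a general automorphism of $X$ can restrict to an apartment is more delicate, and this is exactly the point at which the genericity hypothesis is needed.
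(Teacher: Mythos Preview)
Your argument shares the paper's opening move: the uniqueness of the apartment $\a$ containing the axis $\ell$ is established by the same wall-parallelism contradiction (if $\a\cap\a'$ were bounded by a wall-hyperplane, $\ell$ would have to be parallel to it). For the inclusion $\a\subseteq\Min(g)$ the paper is much terser than you --- it writes only ``It also follows that $\a\subset\Min(g)$'' --- and then for the reverse inclusion argues locally: a chamber $C'$ adjacent to $\a$ but outside it has $gC'$ adjacent to $gC=C+v_g$, so the geodesic from an interior point $y\in C'$ to $gy$ must pass through $\a$ and is therefore strictly longer than $|v_g|$; convexity of $\Min(g)$ then finishes. Your route for $\Min(g)\subseteq\a$ is genuinely different: you pass to the visual boundary, using that every axis $\ell_y\subset\Min(g)$ shares its endpoints with $\ell$, that these are interior points of opposite spherical chambers, and that opposite spherical chambers determine a unique affine apartment. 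Both arguments are short; the paper's is more hands-on and avoids the boundary machinery, while yours is cleaner once that machinery is in place --- but note that the paper's step already uses $gC=C+v_g$, i.e.\ that $g$ acts on $\a$ by translation, the very thing you are trying to justify.

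The concern you flag is real and is exactly the point the paper glosses over. The linear part $A$ of $g|_\a$ lies in the point group of $\Aut(\a)$, which in general properly contains the finite Weyl group by the factor $\Aut(\a)/W_\a\cong\Aut(C)$ described earlier in the paper; a vector not parallel to any wall has trivial stabiliser in the Weyl group, but need not have trivial stabiliser in this larger group. In type $\tilde A_2$, for instance, any root direction is ``generic'' in the paper's sense (it is parallel to no wall) yet is fixed by a reflection of $\Aut(\a)$ that is not in $W_\a$, so a glide reflection along that axis would be generic with $\Min$ equal to a line. Thus the step $A=\mathrm{id}$ is not justified by genericity alone for arbitrary building automorphisms; it goes through cleanly once $g$ is type-preserving (then $A$ does lie in the finite Weyl group and regularity of $v$ forces $A=\mathrm{id}$), and that is presumably the intended context here and in the paper's subsequent Proposition~\ref{prop1.4.6}, whose proof invokes the same principle.
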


\begin{proof}
Let $g$ be generic and let $x\in \Min(g)$.
Let $\a$ be an apartment containing the line $\ell$ which is the convex hull of $(\dots,g^{-1}x,x,gx,\dots)$.
Let $\a'$ be another apartment containing $\ell$. We show that $\a=\a'$.
We have that $\a\cap\a'$ is a convex set in $\a$ which is bounded by hyperplanes $\h$ which contain walls of some chambers.
Now $g$ being generic, the line $\ell$ is not parallel to any such hyperplane, so $\ell$ intersects any of these hyperplanes, contradicting the fact that $\ell$ must always be on one side of each of the hyperplanes bounding $\a\cap\a'$. This means that $\a\cap\a'$ is not bounded by any hyperplane at all, so $\a\cap\a'$ must be equal to $\a$.
It also follows that $\a\subset \Min(g)$.
Let now $C'$ be a chamber neighboring a chamber $C$ of $\a$ but not contained in $\a$.

\begin{center}
\begin{tikzpicture}
\draw(0,-3)--(0,3);
\draw[very thick](0,-1)--(1,0);
\draw[very thick](0,1)--(1,2);
\draw[very thick](0,-2)--(0,-1);
\draw(-.5,-3)node{$\a$};
\draw(-.5,-1.5)node{$C$};
\draw[very thick](0,0)--(0,1);
\draw(-.5,.5)node{$gC$};
\draw(.7,-.8)node{$C'$};
\draw(.7,1.2)node{$gC'$};
\end{tikzpicture}
\end{center}

As $gC'$ neighbors $gC=C+v_g$, any apartment containing $C'$ and $gC'$ has a non-trivial intersection with $\a$ and so the geodesic  from any interior point $y$ of $C'$ to $gy\in gC'$ must run through $\a$, hence be longer than $v_g$, which means that $C'$ is not in $\Min(g)$.
As $\Min(g)$ is convex, it follows $\Min(g)=\a$.
\end{proof}

\begin{definition}\label{defrelpos}
Let $C\subset X$ be a chamber.
Two sets $A,B\subset X$ are on the same \e{relative position} to $C$ if there exist two apartments $\a\supset A$ and $\b\supset B$, both containing $C$, and an isomorphism $\a\to \b$ mapping $A$ bijectively to $B$ and fixing $C$ point-wise.
Likewise, for two points $x,y\in X$ we say that they are in the same relative position to $C$ if the sets $\{ x\}$ and $\{ y\}$ are.
\end{definition}

\begin{center}
\begin{tikzpicture}
\draw[very thick](0,0)--(2,-1)--(2,1)--(0,0);
\draw[very thick](2,-1)--(4,0)--(2,1)--(4,2)--(4,0);
\draw[thick,dotted](2,1)--(3.5,-1);
\draw[very thick](3.09,-.45)--(3.5,-1)--(2,-1);
\draw[very thick](3.5,-1)--(4.5,.2)--(4,.35);
\draw[thick,dotted](4.5,.2)--(2,1);
\draw(1,0)node{$C$};
\draw(3.5,1.3)node{$A$};
\draw(3.5,-.5)node{$B$};
\end{tikzpicture}

The chambers $A$ and $B$ are in the same relative position to the chamber $C$.
\end{center}

\begin{proposition}
\begin{enumerate}[\quad\rm (a)]
\item The set $G^\gen$ of generic elements is open in $G$.
More precisely, let $g\in G^\gen$ and let $C$ be a chamber in $\Min(g)$.
Then every $h\in gK_C$ is generic with $l(h)=l(g)$. 
Here $K_C$ is the point-wise stabilizer in $G$ of the chamber $C$.
\item
Moreover, fix a compact subset $U\subset X$. Then the set $G_U^\gen$ of all $g\in G^\gen$ with $U\subset \Min(g)$ is an open subset of $G$.
\end{enumerate}\end{proposition}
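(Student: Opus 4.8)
The strategy is to establish the second, more precise assertion of (a) by hand; the openness of $G^\gen$ then follows immediately, since $gK_C$ is an open neighbourhood of $g$ (the subgroup $K_C$ being compact open) contained in $G^\gen$. So fix $g\in G^\gen$, a chamber $C\subseteq\Min(g)$, an element $k\in K_C$, and put $h=gk$. By the preceding proposition $\Min(g)$ is a single apartment $\a$, on which $g$ acts as translation by a vector $v_g$ with $|v_g|=l(g)$ and $v_g$ not parallel to any wall. Choose $x_0$ in the interior of $C$. As the interior of $C$ is open in $X$, the subgroup $K_C$ fixes a neighbourhood of $x_0$, so $h$ agrees with $g$ near $x_0$; in particular $hx_0=gx_0$ and $d(x_0,hx_0)=l(g)$, which already gives $l(h)\le l(g)$.

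The core of the proof is to produce an axis for $h$. Let $\ell\colon\R\to\a$ be the straight line $\ell(t)=x_0+t\,v_g/|v_g|$, so $\ell(l(g))=gx_0$, and let $\sigma=\ell|_{[0,l(g)]}$, the geodesic segment from $x_0$ to $gx_0=hx_0$. I claim $\ell':=\bigcup_{n\in\Z}h^n\sigma$ is a geodesic line which $h$ translates along itself by $l(g)$. The consecutive pieces join up, since $h^n\sigma$ runs from $h^nx_0$ to $h^{n+1}x_0$, so it suffices to see that the concatenation is locally geodesic at a single break point, say at $gx_0$ between $\sigma$ and $h\sigma$, and then to transport this by the powers of the isometry $h$. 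This is exactly where the hypothesis $k\in K_C$ is used: since $h$ coincides with $g$ near $x_0$ and $g$ translates $\ell$ inside $\a$, the segment $h\sigma$ issues from $gx_0$ along the forward prolongation of $\ell$, so near $gx_0$ the concatenation $\sigma\ast h\sigma$ is an arc of the geodesic line $\ell$ and is in particular locally geodesic there. A locally geodesic line in the CAT(0) space $X$ is a geodesic line \cite{Brid}, and $h\ell'(t)=\ell'(t+l(g))$ by construction, so the CAT(0) axis theory recalled for Proposition~\ref{prop1.4.4} shows that $h$ is hyperbolic, that $l(h)=l(g)$, and that $\ell'\subseteq\Min(h)$. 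Finally $h$ is generic: its axis $\ell'$ passes through $x_0\in\Min(h)$ with $hx_0-x_0=v_g$ (the segment from $x_0$ to $hx_0=gx_0$ being $\sigma\subset\a$), and $v_g$ is not parallel to any wall. This proves (a).

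For (b), fix a compact set $U\subset X$ and some $g\in G_U^\gen$, so $U\subseteq\Min(g)=\a$. By compactness $U$ is contained in a finite union $C_1\cup\dots\cup C_r$ of closed chambers of $\a$; let $E$ be the (finite) set of all their vertices, so that $K_E$ fixes $C_1\cup\dots\cup C_r$ pointwise and $K_E\subseteq K_{C_1}$. For $h=gk$ with $k\in K_E$, part (a) gives $h\in G^\gen$ with $l(h)=l(g)$, while $hy=gy$ for every $y\in C_1\cup\dots\cup C_r\subseteq\Min(g)$ yields $d(y,hy)=l(g)=l(h)$, hence $y\in\Min(h)$; therefore $U\subseteq\Min(h)$. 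Thus $gK_E$ is an open neighbourhood of $g$ inside $G_U^\gen$, proving (b).

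The step I expect to be the real obstacle is the middle paragraph: choosing the right line $\ell'$ and verifying the local geodesy (angle $\pi$) condition at the break points, i.e. that $h\sigma$ prolongs $\ell$ past $gx_0$. Once the axis $\ell'$ has been identified, the statements about $l(h)$, the genericity of $h$, and part (b) are routine.
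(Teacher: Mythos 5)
Your proof is correct, but it runs along a genuinely different track from the paper's. For part (a) the paper argues combinatorially via relative positions: the relative-position class of a point $y$ with respect to $C$ is determined by the distance function $d(\cdot,y)|_C$, hence is preserved by $K_C$; the lower bound $l(h)\ge l(g)$ then comes from the (asserted) inequality $d(y,z)\ge l(g)$ whenever $z$ is in the same relative position to $gC$ as $gy$, and the upper bound from $hx=gx$ for $x\in C$, exactly as in your first paragraph. You instead build an explicit axis $\ell'=\bigcup_n h^n\sigma$, check local geodesy at the unique essential break point $gx_0$ (where $h\sigma$ prolongs $\ell$ because $k$ fixes $C$ pointwise, so $h=g$ on an initial piece of $\sigma$), and invoke the CAT(0) local-to-global principle together with the axis characterization of hyperbolic isometries (Theorem II.6.8 of \cite{Brid}, which the paper already relies on for Proposition \ref{prop1.4.4}). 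Your route has the advantage of being self-contained modulo standard CAT(0) facts — it does not need the paper's unproved distance inequality for relative-position classes — and it makes the genericity of $h$ transparent by exhibiting a point of $\Min(h)$ with translation vector $v_g$; the paper's route yields the slightly stronger pointwise statement $d(y,hy)\ge l(g)$ for every $y\in X$. For part (b) the two arguments are essentially the same (the paper uses $gK_U$ after reducing to the case that $U$ contains a chamber, you use $gK_E$ for the vertex set $E$ of finitely many chambers covering $U$), and both are fine.
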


\begin{proof}
(a)
Let $C\subset X$ be a chamber. As $X$ is locally finite, for any given $y$ there are only finitely many $y'$ in the same relative position with respect to $C$.
Sharing the same relative position is an equivalence relation and the class $[y]_C$ of a given $y$ is uniquely determined by the map $C\to [0,\infty)$, $c\mapsto d(c,y)$. 
Therefore, relative position classes are preserved by $K_C$ and for given $y$, every apartment $\a$ containing $C$, contains exactly one $y'\sim_Cy$.

Let now $g$ be generic, $\a=\Min(g)$ and $C$ a chamber in $\a$.
If $y\in X$ and $z\sim_{gC} gy$, then it follows that $d(y,z)\ge l(g)$. This applies in particular to $z=gy'$ for any $y'\sim_Cy$. 
Let $k\in K_C$ and $h=gk$, then for $x\in C$ one has $hx=gx$ and therefore $d(x,hx)=l(g)$, so $l(h)\le l(g)$.
On the other hand, let $y\in X$, then $y'=ky\sim_Cy$, so $hy=gy'\sim_{gC}gy$ and hence $d(y,hy)\ge l(g)$, so $l(h)\ge l(g)$ as claimed.

For part (b) we may assume $U\ne \emptyset$.
Let $K_U$ be the point-wise stabilizer of $U$ in $G$.
Then $K_U$ is an open subgroup of $G$.
We first show (b) under the condition that $U$ contains a chamber $C$.
Let $g\in G^\gen_U$. Then $C\subset \Min(g)$.
By (a) we have $gK_U\subset gK_C\subset G^\gen$.
The open set $gK_U$ is contained in $G^\gen_U$, so the set
$G_U^\gen$ equals $\bigcup_{g\in G_U^\gen}gK_U$, a union of open sets, hence $G_U^\gen$ is open.

Finally, for general $U$ let $u\in U$ and let $\{C_1,\dots,C_n\}$ be the set of all chambers containing $u$.
For given $g\in G^\gen$ the set $\Min(g)$ is a union of chambers, hence if $u\in\Min(g)$, then one of the $C_j$ is contained in $\Min(g)$.
Therefore,
$$
G_U^\gen=\bigcup_{j=1}^n G_{U\cup C_j}^\gen.
$$
This is a union of open sets, hence open.
\end{proof}

\begin{proposition}\label{prop1.4.6}
Let $a\in G$ be generic and let $\a$ be the apartment $\Min(a)$.
The centralizer $G_a$ of $a$ in $G$ preserves $\a$ and acts on $\a$ by translations.
If $h\in G_a$ and $h$ is also generic, then $\Min(h)=\a=\Min(a)$.
The point-wise stabilizer $K_\a\cap G_a$ in $G_a$ of the apartment $\a$ is a compact open subgroup of $G_a$ with quotient 
$$
G_a^\a=G_a/\(K_\a\cap G_a\)\cong \Z^r
$$ 
for some $1\le r\le d$.
\end{proposition}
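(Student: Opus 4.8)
The plan is to show each assertion in turn, using the earlier results on generic elements and on the structure of $\Min(a)=\a$.

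First I would show that $G_a$ preserves $\a$. Since $a$ is generic, Proposition~1.4.? tells us $\Min(a)$ is exactly the apartment $\a$, and $a$ acts on $\a$ by translating along the lines parallel to the translation vector $v_a=ax-x$. If $h\in G_a$, then $h$ commutes with $a$, and the displacement function satisfies $d_a(hx)=d(a h x, h x)=d(h a x, h x)=d(ax,x)=d_a(x)$, so $h$ preserves the set $\Min(a)=\a$ where $d_a$ attains its minimum. Thus $G_a$ acts on $\a$, and this action is via $\Aut(\a)$. Next I would argue that $G_a$ acts on $\a$ by translations: an element $h\in G_a$ induces an affine isometry $\ol h$ of $\a$ commuting with the translation $\tau$ by $v_a$. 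Writing $\ol h(z)=Lz+w$ with $L$ an orthogonal linear map, commutation with $\tau$ forces $L v_a=v_a$, so $L$ fixes the line $\R v_a$ pointwise; but $v_a$ is in general position (not parallel to any wall), and the only element of the finite point group of the Coxeter complex fixing a generic vector is the identity, hence $L=\mathrm{id}$ and $\ol h$ is the translation by $w$. This also gives the claim that a generic $h\in G_a$ has $\Min(h)=\a$: the translation vector of $h$ on $\a$ is again generic, so $\a\subset\Min(h)$, and by the proposition characterizing $\Min$ of a generic element we get $\Min(h)=\a$.

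Then I would identify $K_\a\cap G_a$ and the quotient. The kernel of the action $G_a\to\Aut(\a)$ is exactly $K_\a$, so $K_\a\cap G_a$ is the pointwise stabilizer of $\a$ inside $G_a$; since $K_\a$ is closed and the stabilizer $K_C$ of a chamber $C\subset\a$ is compact open in $G$ and contained in $G_a$ (by part (a) of the preceding proposition, $gK_C\subset G^\gen$ with constant length, and commuting with $a$ is a closed condition — more carefully, I would take $K_C\cap G_a$, which is compact open in $G_a$, and note $K_\a\cap G_a$ has finite index in it since $\Aut(\a)/W_\a$ is finite and the translations of $\a$ realized in the image form a subgroup), $K_\a\cap G_a$ is compact and open in $G_a$. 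Hence $G_a^\a:=G_a/(K_\a\cap G_a)$ is a discrete subgroup of the translation group of $\a$, i.e. a discrete subgroup of $\R^d$, so it is free abelian of some rank $r$ with $0\le r\le d$; since it contains the image of $a$, which is a nontrivial translation, $r\ge 1$.

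The main obstacle I expect is the step showing $K_\a\cap G_a$ is \emph{open} in $G_a$ — equivalently, that the image $G_a^\a$ is discrete in the translation group of $\a$. The cleanest route is: fix a chamber $C\subset\a$; the subgroup $K_C\cap G_a$ is compact open in $G_a$, and any $h\in K_C\cap G_a$ has $hx=x$ for $x\in C$ while acting as a translation on $\a$, which forces that translation to be trivial, so $K_C\cap G_a\subset K_\a\cap G_a$; thus $K_\a\cap G_a$ is open, and being a closed subgroup of the compact $K_\a$ (intersected appropriately) it is compact. I would then finish by invoking that a discrete subgroup of $\R^d$ is isomorphic to $\Z^r$ for some $r\le d$, and $r\ge1$ because $a$ projects to a nonzero element.
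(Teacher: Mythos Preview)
Your argument is correct and follows the same line as the paper's: invariance of $\Min(a)$ under the centralizer gives $h\a=\a$, commutation with the translation by $v_a$ plus genericity of $v_a$ forces the linear part of $\ol h$ to be the identity, and the observation that an element of $G_a$ fixing one point (or chamber) of $\a$ while acting by translation must fix $\a$ pointwise yields $K_C\cap G_a=K_\a\cap G_a$, hence compact-openness and the identification of the quotient with a subgroup of the lattice of cellular translations of $\a$. Your middle paragraph's first pass at compact-openness (the parenthetical about $K_C\subset G_a$ and finite index) is tangled and should simply be replaced by the clean argument you give in the final paragraph, which is exactly what the paper does with a point $x_0$ in place of your chamber $C$.
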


\begin{proof}
Because of  $\Min({h^{-1}ah})=h\Min(a)$, the centralizer $G_a$ preserves $\a=\Min(a)$.
Now let $h\in G_a$, then the action of $h$ on $\a$ commutes with the action of $a$ on $\a$ which is a translation.
Therefore, $h$ acts through a linear motion which preserves the translation vector of $a$, followed by a translation.
The linear motion is cellular, i.e., it acts on the spherical building at infinity. But as the translation vector is generic, no such motion except for the identity, will preserve the translation vector.
Hence $G_a$ acts on $\a$ by translations only.
Let $x_0\in \a$ be a point. The stabilizer group $K_{x_0}$ is compact open in $G$, therefore $K_{x_0}\cap G_a$ is compact and open in $G_a$.
As $G_a$ acts by translations only, the latter group also equals $K_\a\cap G_a$.
The quotient group $G_a^\a$ equals the group of translations through which $G_a$ acts, whence the isomorphism.
\end{proof}

\subsection{Levi components}\label{Levi}
\begin{definition}
Let $\CC\subset\partial X$ be a spherical chamber and let $P=P_\CC$ be the corresponding parabolic subgroup.
Then $\CC\subset\partial\a$ for some apartment $\a$. The boundary $\partial\a$ of $\a$ is a sphere and there exists a unique chamber $\ol \CC\subset \partial\a$ opposite to $\CC$.
Let $\ol P$ be the corresponding parabolic subgroup.
Set
$$
L=P\cap\ol P.
$$
We call $L$ a \e{Levi component} of $P$.
\end{definition}

\begin{lemma}
The Levi component $L$ stabilizes the apartment $\a$ and acts on $\a$ via translations only.

So the group $L$ coincides with the subgroup $(TG)_\a$ of $G_\a$ of all $g\in G_\a$ which act by translations on $\a$. 
We also have
$$
L=P_\a,
$$
where $P_\a$ denotes the stabilizer of the apartment $\a$ in $P$.
\end{lemma}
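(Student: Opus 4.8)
The plan is to establish the three asserted identities—$L$ fixes $\a$ pointwise and acts by translations, $L = (TG)_\a$, and $L = P_\a$—by exploiting the geometry of the two opposite chambers $\CC, \ol\CC$ at infinity. The key fact I would use is that $\a$ is the union of all geodesic lines whose two endpoints lie in $\CC$ and $\ol\CC$ respectively (more precisely, whose forward ray represents a cusp in the interior of $\CC$ and whose backward ray represents the opposite cusp in $\ol\CC$); this is the standard ``opposite chambers determine an apartment'' statement for spherical buildings at infinity applied to $\partial\a$.

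First I would show $L$ stabilizes $\a$ as a set. Take $g \in L = P \cap \ol P$. By definition $g$ fixes $\CC$ pointwise and fixes $\ol\CC$ pointwise, hence it fixes both endpoints of every line of the above type. Since $g$ is an isometry of the CAT(0) space $X$ and $X$ is a unique geodesic space, $g$ maps each such line to the geodesic line with the same pair of endpoints, i.e.\ to itself; taking the union over all such lines gives $g\a = \a$. Thus $L \subset G_\a$. Next, $g$ acting on $\a$ is an element of $\Aut(\a)$ which fixes the chamber $\CC \subset \partial\a$; since $\partial\a$ is a sphere and $g$ acts on it as an isometry fixing a full chamber, $g$ fixes $\partial\a$ pointwise. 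A euclidean isometry of $\a$ inducing the identity on the boundary sphere $\partial\a$ must be a translation. Hence $L$ acts on $\a$ by translations only, which is the first claim; in particular $L \subset (TG)_\a$.

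For the reverse inclusion $(TG)_\a \subset L$: if $g \in G_\a$ acts on $\a$ by a translation, then its induced action on $\partial\a$ is the identity, so $g$ fixes $\CC$ and $\ol\CC$ pointwise, whence $g \in P \cap \ol P = L$. This gives $L = (TG)_\a$. Finally, for $L = P_\a$: the inclusion $L \subset P_\a$ is immediate since $L \subset P$ and $L \subset G_\a$. Conversely, if $g \in P_\a$, then $g$ stabilizes $\a$ and fixes $\CC$; as above, an automorphism of $\a$ fixing a chamber of the boundary sphere $\partial\a$ must fix all of $\partial\a$ pointwise, in particular it fixes $\ol\CC$, so $g \in \ol P$ as well, giving $g \in L$.

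The main obstacle, and the step deserving the most care, is the claim that an automorphism of the apartment $\a$ fixing the single spherical chamber $\CC \subset \partial\a$ pointwise must act trivially on all of $\partial\a$ and hence be a translation of $\a$. This is where one uses that $\partial\a$ is a \emph{thin} spherical building (a Coxeter sphere): an isometry of the sphere fixing a chamber pointwise fixes every wall of that chamber, and by the standard gallery-connectedness argument for Coxeter complexes, fixing one chamber pointwise forces fixing every adjacent chamber, and inductively all of $\partial\a$. Once the boundary action is trivial, the euclidean isometry of $\a$ is a translation because its linear part fixes every direction. I would also take a moment to confirm that the line-through-opposite-cusps description of $\a$ is literally the one available from Section~\ref{SecBound} and the cited \cite{Bldgs}, so that the ``$g$ fixes both ideal endpoints $\Rightarrow$ $g$ fixes the line'' step is rigorous in the present generality of (possibly non-Bruhat–Tits) buildings.
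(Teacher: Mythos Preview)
Your proof is correct and follows essentially the same route as the paper: fix $\CC$ and $\ol\CC$ to pin down $\a$, then argue that fixing a chamber of the thin sphere $\partial\a$ forces the identity on $\partial\a$, hence a translation. The only cosmetic difference is that you show $L$ stabilizes $\a$ via the description of $\a$ as the union of geodesic lines joining $\CC$ to $\ol\CC$, whereas the paper invokes directly the uniqueness of the apartment with $\CC,\ol\CC\subset\partial\a$; these are two faces of the same fact. You also supply the reverse inclusions $ (TG)_\a\subset L$ and $P_\a\subset L$, which the paper's proof leaves implicit.
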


\begin{proof}
There is only one apartment $\a$ containing $\CC$ and $\ol \CC$ in its boundary, so the group $L$ stabilizes this apartment.
As $L$ stabilizes an open subset of $\partial\a$ point-wise, it stabilizes the whole of $\partial\a$ point-wise, i.e., it acts trivially on the boundary $\partial\a$.
Therefore it can only act through translations on $\a$.
\end{proof}

\begin{definition}
The action of $L$ on $\a$ induces a group homomorphism $\phi:L\to T_\a$, where $T_\a$ is the translation group of the apartment $\a$.
Let $M$ denote its kernel, so $M$ is the point-wise stabilizer of the apartment $\a$ in $L$.
\end{definition}

\begin{proposition}
The group $M$ is normal in $L$, the quotient 
$$
A=L/M
$$ 
is a free abelian group of rank $\le d=\dim X$.
\end{proposition}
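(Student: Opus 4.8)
The normality of $M$ in $L$ will require no work: $M$ is by construction the kernel of the homomorphism $\phi\colon L\to T_\a$, so it is automatically normal, and $A=L/M$ is isomorphic to the image $\phi(L)$. The whole statement therefore reduces to showing that $\phi(L)$ is free abelian of rank at most $d$, which is what I would concentrate on.

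The plan is as follows. By the preceding lemma, $L$ acts on the apartment $\a$ through translations only, so, identifying $\a$ with Euclidean $d$-space, $\phi(L)$ is a subgroup of the translation group $\R^d$ of $\a$; in particular it is abelian. The crucial claim is that $\phi(L)$ is \emph{discrete} in $\R^d$. To see this I would use that $L\subset G=\Aut(X)$ acts by automorphisms of the polysimplicial complex $X$, hence permutes the vertices of $X$ lying in $\a$; thus $\phi(L)$ acts by translations on the vertex set $V$ of $\a$, which is a discrete subset of $\R^d$ because $X$ is locally finite. Fixing $v_0\in V$, the orbit $\phi(L)\cdot v_0=v_0+\phi(L)$ is contained in $V$, hence discrete, and translating back shows $\phi(L)$ itself is discrete. (If one reads $T_\a$ literally as the Weyl translation lattice introduced earlier, then $\phi(L)\subset T_\a\cong\Z^d$ and this step is immediate.) Once discreteness is established, I would invoke the standard structure theorem: a discrete subgroup of $\R^d$ is free abelian of rank at most $d$, and hence so is every subgroup of it; applied to $\phi(L)$ this gives $A\cong\phi(L)\cong\Z^r$ with $r\le d$.

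The only genuinely delicate point is the discreteness of $\phi(L)$: a priori $L$ could act on $\a$ by translations whose vectors form a dense subgroup of $\R^d$, and what rules this out is precisely that these translations must be compatible with the locally finite cell structure of $X$, i.e.\ must carry the vertex set $V$ into itself. Everything else --- the normality of $M$, the abelianness, and the rank bound --- is formal.
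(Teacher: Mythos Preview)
Your proposal is correct, and the paper's proof is precisely your parenthetical remark: in the paper $T_\a$ denotes the group of translations in the affine Weyl group $W_\a$ (introduced in Section~1.1), hence $T_\a\cong\Z^d$, and since $A=L/M\cong\phi(L)\subset T_\a$ one is done by the structure theorem for subgroups of free abelian groups. Your longer discreteness argument via the vertex set is a valid alternative that does not rely on identifying $T_\a$ as the Weyl translation lattice, but it is unnecessary here given the paper's conventions.
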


\begin{proof}
$A$ is isomorphic to a subgroup of $T_\a$ which is a free abelian group of rank $d$. Therefore $A$ is a free abelian group of rank $\le d$.
\end{proof}

\begin{lemma}\label{lem1.16}
Let $\CC$ be a spherical chamber, $P=P_\CC$ and $H=H_\CC$.
The group $H$ is normal in $P$ with abelian quotient $P/H$.
We have $M\subset H$ and the map $A=L/M\to P/H$ is injective.
In particular, $LH=HL$ is normal in $P$.
\end{lemma}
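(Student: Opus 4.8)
The plan is to realize $H$ as the kernel of a homomorphism $P\to(\R,+)$ built from the Busemann function of the cusp $b$; the first two assertions then fall out immediately, and $M\subset H$ together with the injectivity of $A\to P/H$ will follow by feeding in the irrationality of $b$ and the structure of the Levi component $L$.

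First I would fix notation. Since $\CC$ is a chamber, Lemma~\ref{lem1.3.4}(a) lets us choose the cusp $b\in\CC$ to be irrational, hence in general position, so that $P=P_\CC$ is the stabilizer of $b$ and, by Lemma~\ref{lem1.3.4}(c), $H=H_\CC=H_b$. Let $\beta=\beta_b\colon X\to\R$ be a Busemann function associated to $b$ (see \cite{Brid}*{Chapter II.8}). I would check that the $b$-horospheres $[x]_b$ of Section~\ref{sechoro} are precisely the level sets of $\beta$: both inclusions come from using the contracting-flow lemma of Section~\ref{sechoro} to push two given points into a common apartment $\a$ with $b\in\partial\a$, on which $\beta$ is an affine function whose level sets are the hyperplanes perpendicular to the direction of $b$, so that lying at the same $\beta$-level and being joined by a segment perpendicular to $r_{b,\cdot}$ coincide, while $\beta(\phi_tz)=\beta(z)-t$ carries this statement back along the flow. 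Since every $p\in P$ fixes $b$, the standard Busemann identity gives a constant $c(p)\in\R$ with $\beta\circ p=\beta+c(p)$, and $p\mapsto c(p)$ is a homomorphism $c\colon P\to(\R,+)$. Now $p$ lies in $H$ if and only if it preserves every horosphere, equivalently every level set of $\beta$, equivalently $c(p)=0$; hence $H=\ker c$ is normal in $P$ and $P/H\cong c(P)\subseteq\R$ is abelian.

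For the inclusion $M\subset H$: by definition $M$ is the pointwise stabilizer in $L$ of the apartment $\a$ with $\CC,\ol\CC\subset\partial\a$, so for $m\in M$ and any $x\in\a$ we get $\beta(mx)=\beta(x)$, whence $c(m)=0$ and $m\in H$. For the injectivity of $A=L/M\to P/H$ it is enough to show $L\cap H=M$, since $L\cap H$ is the kernel of $L\hookrightarrow P\to P/H$ and already contains $M$. So let $g\in L\cap H$. By the Levi lemma $L$ stabilizes $\a$, hence so does $g$; combined with $g\in H=H_b$ this says $g\in H_\a$, the stabilizer of $\a$ in $H_b$. Since $b$ is irrational, $H_\a$ acts trivially on $\a$, so $g$ fixes $\a$ pointwise, and as $g\in L$ this is precisely the statement $g\in M$. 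Hence $L\cap H=M$, and $A\to P/H$ is injective.

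Finally, since $H$ is normal in $P$ the set $LH=HL$ is a subgroup, and its image in the abelian group $P/H$ is a subgroup, hence normal there, so the preimage $LH$ is normal in $P$. The one genuinely technical point, and the place where I expect the main obstacle, is the identification of the horospheres of Section~\ref{sechoro} with the level sets of $\beta$ and the resulting description $H=\ker c$; once that is in hand, everything else is formal.
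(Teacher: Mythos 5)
Your proof is correct and takes essentially the same route as the paper: both proofs realize $H$ as the kernel of a group homomorphism from $P$ to $(\R,+)$, which immediately gives normality of $H$ and the abelian quotient. The paper builds this homomorphism directly by parametrizing the horospheres as $h_t = \phi_t(h_0)$ (resp.\ $\phi_{|t|}^{-1}(h_0)$ for $t<0$) along the cuspidal flow and setting $p(h_t)=h_{t+t(p)}$, while you use the Busemann function $\beta_b$ and the cocycle $c(p)$ with $\beta\circ p = \beta + c(p)$; these are equivalent packagings once one checks, as you do, that the $\sim$-classes of Section~\ref{sechoro} are exactly the level sets of $\beta_b$. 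Where your write-up goes beyond the paper is that it supplies the verifications that the paper leaves implicit: the paper simply asserts $M\subset H$ and ``As $b$ is irrational, $A\to P/H$ is injective,'' whereas you give the clean argument $L\cap H = M$ via $L\cap H \subset H_\a$ and the irrationality hypothesis (which forces $H_\a$ to fix $\a$ pointwise), plus the Busemann computation showing $c|_M = 0$. This is a genuine expository improvement over the paper's terse proof, with no change in the underlying idea.
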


\begin{proof}
Let $p\in P$ and let $b\in \CC$ be irrational. Then $p$ maps horospheres to horospheres.
Fix $x_0\in X$ and denote the horosphere through $x_0$ by $h_0$.
For $t>0$ let $h_t$ be the horosphere in distance $t$ in the direction of $b$, so $h_t=\phi_t(h_0)$.
For $t<0$, let $h_t$ be the horosphere in distance $|t|$ to $h_0$ in the direction away from $b$, so $h_t=\phi_{|t|}^{-1}(h_0)$.
Then for each $p\in P$ there exists a unique $t(p)\in\R$ such that $p(h_t)=h_{t+t(p)}$.
The map $t:P\to\R$ is a group homomorphism with kernel $H$.

Now we have $LH=HL$ since $H$ is normal.
Further, as $P/H$ is abelian, the group $HL$ is normal in $P$.
As $b$ is irrational, $A\to P/H$ is injective.
\end{proof}

%: 
\section{The Lefschetz formula}
In this section we formulate and prove the Lefschetz formula Theorem \ref{5.1}, 
which relates the trace of a geometrically induced representation with local geometrical data.
This means that it is a local-global formula for the action of a lattice on the building.

\subsection{Periodic and weakly symmetric buildings}

\begin{definition}
The building $X$ is called \e{periodic}, if there exists a covering map $ X\to F$, where $F$ is a finite complex.
\end{definition}

\begin{definition}
Let $G\bs X$ denote the set of all $G$-orbits in $X$.
This set carries the topology induced by the projection $X\to G\bs X$, $x\mapsto Gx$.
As the group $G$ acts cellularly on $X$, the quotient space $G\bs X$ is a CW-complex. 
\end{definition}

\begin{definition}
A \e{lattice} $\Ga$ in a locally compact group $G$ is a discrete subgroup $\Ga\subset G$ such that the quotient $G/\Ga$ carries a $G$-invariant Radon measure of finite volume.
A discrete subgroup $\Ga\subset G$ is called \e{cocompact} if the quotient $G/\Ga$ is compact.
A cocompact subgroup is always a lattice \cite{HA2}. 
\end{definition}

\begin{lemma}
\begin{enumerate}[\rm (a)]
\item If $X$ is periodic, then $G\bs X$ is a finite complex.
\item Suppose that $G\bs X$ is finite.
Then $X$ is periodic if and only if the locally compact group $G$ contains a torsion-free cocompact lattice $\Ga$.
\end{enumerate}
\end{lemma}

\begin{proof}
(a) Let $\pi:X\to F$ be a covering where $ F$ is a finite complex. As $X$ is contractible (see Section 14.4 of \cite{Garrett}), it is the universal covering of $ F$ and the group $\Ga\subset \Aut(X)$ of deck transformations satisfies $ F=\Ga\bs X$.
Then $G\bs X$ is a quotient of $F=\Ga\bs X$, therefore finite.

(b) As $G$ is open of finite index in $\Aut(X)$, every cocompact lattice in $G$ is a cocompact lattice in $\Aut(X)$.
Conversely, if $\Sigma$ is a cocompact lattice in $\Aut(X)$, we show that $\Ga=\Sigma\cap G$ has finite index in $\Sigma$ and is thus a cocompact lattice in $\Aut(X)$ and hence in $G$.
To see the finite index assertion, note that $N=\ker(\Aut(X)\to\Per(\Aut(X)/G))$ is a finite index normal open subgroup of $\Aut(X)$ contained in $G$.
Then $\Sigma\cap N$ is the kernel of the map $\Sigma\to\Per(\Aut(X)/G)$ and therefore has finite index in $\Sigma$ and so does $\Ga$, as it contains $\Sigma\cap N$.
We have shown that in part (b) of the lemma we may assume $G=\Aut(X)$.

Let $X$ be periodic and $\pi:X\to F$ a covering of a finite complex $F$.
Then the group $\Ga\subset\Aut(X)$ of deck transformations acts freely on $X$, hence is torsion-free, for any torsion-subgroup would have a fixed pointy by Lemma \ref{lem1.1.3}. 
Also, freeness of the $\Ga$-action shows that $\Ga$ is discrete, for if $x\in X$, then the stabiliser $G_x$ is an open subgroup of $G$ and $\Ga_x\cap \Ga=\{1\}$, so $\Ga$ is discrete.

We show that $\Ga\bs G$ is compact.
For this let $\Ga g_j$ be a sequence and let $x$ be a vertex in $X$.
As there are only finitely many vertices in $\Ga\bs X$ we may replace $(g_j)$ with a subsequence such that $g_jx=\ga_jy$ for $\ga_j\in \Ga$ and a fixed vertex $y$.
This implies that $\ga_j^{-1}g_j$ lies in the compact set of all $g\in G$ with $gx=y$. Hence it has a subsequence convergent in $G$, which means that $(\Ga g_j)$ has a convergent subsequence in $\Ga\bs G$.

For the converse direction let $\Ga\subset G$ be a cocompact torsion-free lattice.
For any $x\in X$ let $K_x$ denote its stabiliser in $G$, then $K_x\cap\Ga$ is discrete and compact, hence finite.
But as $\Ga$ is torsion-free, we get $K_x\cap\Ga=\{1\}$.
Hence $\Ga$ acts freely on $X$.
Therefore, $\Ga$ is the fundamental group of the complex $F=\Ga\bs X$, which has a covering from $X$.
We need to show that $F$ is finite.
For this it suffices to show that the cellular map $F=\Ga\bs X\to G\bs X$ has finite fibers.
For $x\in X$ the fiber over $Gx$ is parametrized by $\Ga\bs G/K_x$, which is finite as $K_x$ is open and $\Ga$ is cocompact.
\end{proof}

\begin{definition}
An apartment $\a$ is called \e{periodic}, if $G_\a\bs \a$ is compact.
This is equivalent to saying that the group $E_\a$ of the exact sequence
$$
1\to K_\a\to G_\a\to \Aut(\a)\to E_\a\to 1
$$
is finite.
\end{definition}

\begin{definition}
A building $X$ is called \e{weakly symmetric}, if  for any chamber $C$ the point-wise stabilizer $K_C$ of $C$ in $G$ acts transitively on the set of all chambers $D$ which are in the same relative position to $C$. (See Definition \ref{defrelpos})
\end{definition}

\begin{examples}
\begin{itemize}
\item A regular tree is weakly symmetric. 
\item If the group $G^0$ of type preserving automorphisms in $G$ (Definition \ref{def2.4.2}) acts \e{strongly transitively} on $X$ (see Chapter 6 of \cite{Bldgs}), then $X$ is weakly symmetric.
\item As a consequence of the last, all buildings that arise from BN-pairs in $G_0$ are weakly symmetric and in particular, Bruhat-Tits buildings of simply connected  $p$-adic groups are weakly symmetric (Theorem 6.56 of \cite{Bldgs}).
\end{itemize}
\end{examples}

\subsection{Orbital integrals}

\begin{lemma}\label{lem2.3.1}
Let $G$ be a locally compact group which admits a compact normal subgroup $N$ with abelian quotient $G/N$.
Then $G$ is unimodular.
\end{lemma}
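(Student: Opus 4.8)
The plan is to show that the modular function $\Delta_G$ of $G$ is trivial. Recall that the modular function is a continuous homomorphism $\Delta_G:G\to\R_{>0}$, where $\R_{>0}$ is viewed as a multiplicative group. The key observation is that $\R_{>0}$ has no nontrivial compact subgroups and no nontrivial finite subgroups, and moreover it is torsion-free and divisible, hence any homomorphism from a group with strong finiteness or commutativity constraints into it must be severely restricted.

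First I would note that the restriction $\Delta_G|_N$ is a continuous homomorphism from the compact group $N$ into $\R_{>0}$; since the image of a compact group under a continuous homomorphism is compact, and the only compact subgroup of $\R_{>0}$ is $\{1\}$, we get $\Delta_G|_N\equiv 1$. Therefore $\Delta_G$ factors through the quotient $G/N$, giving a continuous homomorphism $\ol\Delta:G/N\to\R_{>0}$. Now $G/N$ is abelian by hypothesis, but that alone does not force $\ol\Delta$ to be trivial, so I need to use compactness of $N$ more cleverly — the point is that $N$ being compact and normal with $G/N$ abelian still leaves room for $G$ to be non-unimodular in general unless we invoke something else. The cleanest route: since $N$ is compact, the modular function $\Delta_G$ restricted to $N$ is trivial (already shown), and also $\Delta_{G/N}$ is trivial because $G/N$ is abelian hence unimodular. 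There is a standard formula relating $\Delta_G$, $\Delta_N$, and $\Delta_{G/N}$ for a closed normal subgroup $N$: for $g\in G$, $\Delta_G(g)=\Delta_{G/N}(gN)\cdot \delta(g)$, where $\delta(g)$ measures how conjugation by $g$ scales Haar measure on $N$, i.e. $\delta(g)$ is the modulus of the automorphism $\mathrm{int}(g)$ of $N$. But $N$ is compact, so its Haar measure is finite and conjugation, being a topological automorphism of the compact group $N$, preserves the (unique, up to scalar) Haar measure; hence $\delta(g)=1$ for all $g$. Combined with $\Delta_{G/N}\equiv 1$, this gives $\Delta_G\equiv 1$.

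Concretely, the steps in order are: (1) recall that $\R_{>0}$ has no nontrivial compact subgroups, so $\Delta_G(N)=\{1\}$; (2) recall that any automorphism of a compact group preserves Haar measure, so conjugation by any $g\in G$ acts on $N$ with modulus $1$; (3) recall that an abelian locally compact group is unimodular, so $\Delta_{G/N}\equiv 1$; (4) invoke the product formula $\Delta_G(g) = \Delta_{G/N}(gN)\,\mathrm{mod}(\mathrm{int}(g)|_N)$ for a closed normal subgroup to conclude $\Delta_G\equiv 1$. Step (4) is the one requiring care, as one must cite or verify the decomposition of Haar measure $\int_G f\,dg=\int_{G/N}\int_N f(gn)\,dn\,d(gN)$ and track how right translation interacts with it; this is where I expect the main (though routine) work to lie, and it can be found in standard references such as Hewitt–Ross or the paper's own \cite{HA2}. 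An alternative, more self-contained argument avoiding the product formula: take a compact open subgroup is not available in general here, but one can argue directly that for any $g\in G$, the inner automorphism $\mathrm{int}(g)$ maps the compact set $N$ to itself and hence, together with the fact that $\Delta_G$ is a homomorphism trivial on $N$ and on commutators (as $G/N$ is abelian, all commutators lie in $N$), one sees $\Delta_G$ kills the closed subgroup generated by $N$ and $[G,G]$; but $[G,G]\subset N$ already, so $\Delta_G$ kills $N$, and $G/N$ being abelian does not immediately finish — hence the measure-theoretic input of step (2) genuinely is needed.

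The main obstacle, then, is not conceptual but bookkeeping: one must be careful that "$G/N$ abelian $\Rightarrow$ unimodular" and "conjugation by $g$ on compact $N$ preserves Haar measure" are combined through the correct disintegration formula, so that no stray modulus factor survives. Once that formula is in hand the conclusion is immediate.
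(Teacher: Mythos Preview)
Your proposal is correct and follows essentially the same approach as the paper: the paper observes that conjugation by any $y\in G$ preserves the Haar measure on the compact group $N$, then cites Bourbaki (Integration VII, \S 2, Cor.\ to Proposition 11) for exactly the product formula $\Delta_G(g)=\Delta_{G/N}(gN)\cdot\mathrm{mod}(\mathrm{int}(g)|_N)$ that you spell out in step (4). Your write-up simply unpacks what the Bourbaki citation contains; the exposition is somewhat discursive (step (1) is in fact redundant once you have steps (2)--(4)), but the mathematics is the same.
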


\begin{proof} 
Note that, since $N$ is compact, for any $y\in G$ the group homomorphism $n\mapsto yny^{-1}$ on $N$ does not change the Haar measure on $N$. The claim now follows from VII, \S 2, Cor. to Proposition 11 of \cite{BourbInt}.
\end{proof}

\begin{definition}
Let $C_c^\infty(G)$ denote the space of locally constant functions of compact support on $G$.
For $a\in G^\gen$ let $\a=\Min(a)$.
Normalize the Haar measure on the centralizer $G_a$ such that the compact open subgroup $K_\a\cap G_a$ has volume one.
Together with the given Haar measure on $G$ this induces a $G$-invariant measure on $G/G_a$.
\end{definition}

\begin{proposition}
Suppose that the building $X$ is periodic.
\begin{enumerate}[\quad\rm (a)]
\item The group $G$ is unimodular and for every $a\in G^\gen$ the centralizer $G_a$ is unimodular.
\item Let $a\in G^\gen$ and suppose that for every  $f\in C_c^\infty(G)$ the \e{orbital integral}
$$
\CO_a(f)=\int_{G/G_a}f(hah^{-1})\,dh
$$
exists.
Then the map $\pi_a:G/G_a\to G$, $x\mapsto xax^{-1}$ is proper.
In particular, the conjugation orbit $\big\{ xax^{-1}:x\in G\big\}$ of $a$ is a closed subset of $G$.
\end{enumerate}
\end{proposition}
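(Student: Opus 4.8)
The plan is to derive (a) from the structural results already in place and then prove (b) by contraposition, using the invariant measure on $G/G_a$. For (a): since $X$ is periodic, $G\bs X$ is finite and the lemma relating periodicity to lattices supplies a torsion-free cocompact lattice $\Ga\subset G$. The modular function $\Delta_G\colon G\to\R_{>0}$ is a continuous homomorphism; since $G/\Ga$ carries a finite $G$-invariant measure, $\Delta_G$ is trivial on $\Ga$, hence descends to a continuous function on the compact space $\Ga\bs G$, so its image $\Delta_G(G)$ is a compact subgroup of $\R_{>0}$ and therefore trivial; thus $G$ is unimodular. For $a\in G^\gen$, it was shown above that $K_\a\cap G_a$ is a compact open subgroup of $G_a$, namely the kernel of the translation homomorphism $G_a\to T_\a$, hence normal, with abelian quotient $G_a/(K_\a\cap G_a)\cong\Z^r$; Lemma~\ref{lem2.3.1} then gives that $G_a$ is unimodular. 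These two facts are precisely what is needed for the $G$-invariant measure $d\dot h$ on $G/G_a$ entering the definition of $\CO_a$ to exist.

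For (b), suppose for contradiction that $\pi_a$ is not proper, so there is a compact $C\subset G$ with $\pi_a^{-1}(C)$ non-compact in $G/G_a$. Since $X$ is countable, $G$ and hence $G/G_a$ is $\sigma$-compact and locally compact Hausdorff, so this closed non-compact set contains a sequence $y_n=x_nG_a$ leaving every compact subset of $G/G_a$; by construction $x_nax_n^{-1}\in C$ for all $n$. Fix a compact open subgroup $U\subset G$, e.g.\ some $K_E$. The $U$-orbits $Uy_n$ are compact and any finite union of them is compact, so since the $y_n$ escape to infinity infinitely many of the $Uy_n$ are distinct; passing to a subsequence, the $Uy_n$ are pairwise disjoint. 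Moreover $\pi_a(uy_n)=u(x_nax_n^{-1})u^{-1}\in UCU$ for $u\in U$, and $UCU$ is compact, so $\bigsqcup_n Uy_n\subset\pi_a^{-1}(UCU)$.

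The heart of the matter is a \emph{uniform} bound $\mu(Uy_n)\ge\vol_G(U)>0$ for $\mu=d\dot h$. With $V_n=x_n^{-1}Ux_n$, $G$-invariance gives $\mu(Uy_n)=\mu(V_nG_a/G_a)$, and Weil's integration formula applied to the indicator of $V_n$ yields $\vol_G(V_n)=\vol_{G_a}(V_n\cap G_a)\cdot\mu(V_nG_a/G_a)$. Unimodularity of $G$ gives $\vol_G(V_n)=\vol_G(U)$, while $V_n\cap G_a$, being a compact subgroup of $G_a$, lies in $K_\a\cap G_a$ — the unique maximal compact subgroup of $G_a$, since $G_a/(K_\a\cap G_a)\cong\Z^r$ is torsion-free — so $\vol_{G_a}(V_n\cap G_a)\le\vol_{G_a}(K_\a\cap G_a)=1$. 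Hence $\mu(Uy_n)=\vol_G(U)/\vol_{G_a}(V_n\cap G_a)\ge\vol_G(U)$. Summing over the disjoint orbits, $\mu\big(\pi_a^{-1}(UCU)\big)=\infty$. Choosing a compact open $W$ with $UCU\subset W$ (which exists in the totally disconnected group $G$) and setting $f=\1_W\in C_c^\infty(G)$ then forces $\CO_a(f)\ge\mu\big(\pi_a^{-1}(UCU)\big)=\infty$, contradicting the hypothesis that $\CO_a(f)$ exists. Therefore $\pi_a$ is proper, and since a proper continuous map into a locally compact Hausdorff space is closed, its image, the conjugacy orbit of $a$, is closed in $G$.

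The main obstacle is the uniform positivity of $\mu(Uy_n)$: without it, infinitely many points of $G/G_a$ mapping into a fixed compact set would not force infinite measure on $\pi_a^{-1}(UCU)$. It is exactly here that genericity of $a$ enters, through the fact that $G_a$ is compact-by-$\Z^r$ with torsion-free quotient, so that $K_\a\cap G_a$ is the unique maximal compact subgroup and has volume $1$, bounding $\vol_{G_a}(V_n\cap G_a)\le 1$ from above. A secondary care point is keeping the normalizations of the Haar measures on $G$ and $G_a$ and of $d\dot h$ matched throughout the Weil-formula computation.
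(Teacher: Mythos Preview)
Your proof is correct. Part (a) matches the paper's argument closely: the paper simply cites a reference for ``lattice implies unimodular'' where you spell out the modular-function-on-compact-quotient argument, and both of you invoke Lemma~\ref{lem2.3.1} together with Proposition~\ref{prop1.4.6} for $G_a$.

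For part (b) the two arguments differ in packaging but share the same core. The paper argues directly: for compact open $U$ one has $\CO_a(\1_U)=\mu\bigl(\pi_a^{-1}(U)\bigr)<\infty$; choosing a compact open $K$ with $U$ bi-$K$-invariant, the set $V=\pi_a^{-1}(U)$ is $K$-stable, and from $\vol(K)\,|K\bs V|\le\vol(V)<\infty$ one concludes $V$ is a finite union of compact $K$-orbits, hence compact. You run the contrapositive via an escaping sequence and reach the same contradiction. The decisive step in both is the uniform lower bound on the $\mu$-volume of a $K$-orbit in $G/G_a$; you make this explicit through the Weil formula and the observation that $K_\a\cap G_a$ is the \emph{unique} maximal compact subgroup of $G_a$ (since $G_a/(K_\a\cap G_a)\cong\Z^r$ is torsion-free), forcing $\vol_{G_a}(x^{-1}Kx\cap G_a)\le 1$. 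The paper's line $\vol(K)\,|K\bs V|=\vol(V)$ tacitly relies on the same fact (and is literally correct only as an inequality $\le$), so your treatment is in fact the more careful one on precisely this point. The paper's direct version is shorter; your version has the virtue of isolating exactly where genericity of $a$ enters.
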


Note that the orbital integrals do exist if 
$a\in\Ga$ for a cocompact lattice $\Ga$ by trace formula arguments (see \cite{HA2}).

\begin{proof}
(a) As $G$ possesses a lattice, it is unimodular by Theorem 9.1.6 of \cite{HA2}.
Now let $a\in G^\gen$ and let $\a=\Min(a)$.
Then the point-wise stabilizer $K_\a$ is normal in $G_\a\supset G_a$ and the quotient $G_a/(K_\a\cap G_a)$ is abelian by Proposition \ref{prop1.4.6}.
Therefore $G_a$ is unimodular by Lemma \ref{lem2.3.1}.

(b) Let $U\subset G$ be open and compact.
and let $f=\1_U\in C_c^\infty(G)$.
Then $\infty>\CO_a(f)=\vol(\pi_a^{-1}(U))$.
Now let $K\subset G$ be a compact open subgroup such that $f$ factors over $K\bs G/K$, then $V=\pi_a^{-1}(U)$ is stable under $K$ and we have
$$
\vol(K)\left|K\bs V\right|=\vol(V)<\infty.
$$
This means that $V$ can be covered by finitely many compact sets of the form $KxG_a/G_a$ and hence is compact, so $\pi_a$ is proper.
\end{proof}

\subsection{Statement and proof of the Lefschetz formula}

Let $\Ga\subset G$ be a torsion-free cocompact lattice and let $\ga\in \Ga^\gen=\Ga\cap G^\gen$.
Let $\a=\Min(\ga)$ and let $G^\a$ be the image of $G_\a$ in $\Aut(\a)$. 
Let $\Ga_\ga^\a\subset G_\ga^\a\subset G^\a$ denote the images of the groups $\Ga_\ga$ and $G_\ga$ in $G^\a$.

\begin{lemma}\label{lem3.1.1}
Let $\ga\in\Ga^\gen$ and $\a=\Min(\ga)$.
Then the set $\Ga_{\ga,\tors}$ of  elements of finite order in $\Ga_\ga$ is a finite normal subgroup of $\Ga_\ga$ and  we have an exact sequence $1\to\Ga_{\ga,\tors}\to \Ga_\ga\to G^\a$, so that $\Ga_\ga/\Ga_{\ga,\tors}\cong\Ga_\ga^\a\cong\Z^r$ for some $1\le r\le d$.
The group $\Ga_\ga^\a$ has finite index in $G_\ga^\a$. 
Normalizing the Haar measure on $G_\ga$ in a way that $\vol(K\cap G_\ga)=\vol(K_\a\cap G_\ga)=1$ we get
$$
\vol(\Ga_\ga\bs G_\ga)=\left|G_\ga^\a/\Ga_\ga^\a\right|.
$$
This is the order of the finite set $G_\ga^\a/\Ga_\ga^\a$ so it is a natural number.
\end{lemma}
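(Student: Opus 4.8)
The plan is to deduce everything from Proposition~\ref{prop1.4.6}: since $\ga$ is generic with $\a=\Min(\ga)$, the centralizer $G_\ga$ preserves $\a$, acts on it by translations only, and the homomorphism $G_\ga\to\Aut(\a)$ has compact open kernel $K_\a\cap G_\ga$ and image $G_\ga^\a\cong\Z^r$ with $1\le r\le d$. Restricting this homomorphism to $\Ga_\ga$ yields kernel $\Ga_\ga\cap K_\a$ and, by definition, image $\Ga_\ga^\a$. So the two real points are that $\Ga_\ga\cap K_\a$ coincides with $\Ga_{\ga,\tors}$, and that $\Ga_\ga^\a$ has finite index in $G_\ga^\a$; the latter is the only place where the uniform-lattice hypothesis enters.

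For the first point I would argue that $g\in\Ga_\ga$ has finite order if and only if $g\in K_\a$: any finite-order element of $G_\ga$ acts on $\a$ by a translation of finite order, hence by the identity, so it lies in $K_\a$; conversely $\Ga_\ga\cap K_\a=\Ga\cap K_\a$ is the intersection of a discrete (hence closed) subgroup with a compact subgroup, so it is finite and all its elements have finite order. Thus $\Ga_{\ga,\tors}=\Ga_\ga\cap K_\a$ is a finite group, normal in $\Ga_\ga$ because $K_\a$ is normal in $G_\a\supset\Ga_\ga$, and the exact sequence $1\to\Ga_{\ga,\tors}\to\Ga_\ga\to G^\a$ together with $\Ga_\ga/\Ga_{\ga,\tors}\cong\Ga_\ga^\a$ follows formally. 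As $\Ga_\ga^\a$ is a subgroup of $G_\ga^\a\cong\Z^r$ it is free abelian of rank $\le r\le d$, and it is nontrivial because $\ga\in\Ga_\ga$ acts on $\a$ by the nonzero translation of length $l(\ga)>0$ (Proposition~\ref{prop1.4.4}). (Here $\Ga_{\ga,\tors}=\{1\}$ since $\Ga$ is torsion free, but it costs nothing to keep the argument general.)

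The substantive step is that $\Ga_\ga$ is cocompact in $G_\ga$. I would fix a compact set $\Omega\subset G$ with $G=\Ga\Omega$ and, given $h\in G_\ga$, write $h=\delta k$ with $\delta\in\Ga$, $k\in\Omega$. Then $\delta^{-1}\ga\delta=k\ga k^{-1}$ lies in $\Ga$, in the compact set $\Omega\ga\Omega^{-1}$, and in the $\Ga$-conjugacy class of $\ga$. The set $\Ga\cap\Omega\ga\Omega^{-1}$ is finite (a discrete subset of a compact set, $\Ga$ being closed); picking, for each of its members that is $\Ga$-conjugate to $\ga$, a fixed conjugator $\delta_i\in\Ga$ with $\delta_i^{-1}\ga\delta_i$ equal to that member, one obtains $\delta\delta_i^{-1}\in\Ga$ and $(\delta\delta_i^{-1})\ga(\delta\delta_i^{-1})^{-1}=\ga$, so $\mu:=\delta\delta_i^{-1}\in\Ga_\ga$. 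Then $\mu^{-1}h=\delta_i k$ lies in $\bigcup_i\delta_i\Omega$ and in $G_\ga$, hence $h\in\Ga_\ga\,C$ with $C=G_\ga\cap\bigcup_i\delta_i\Omega$ compact. Thus $G_\ga=\Ga_\ga C$, and since $\Ga_\ga=\Ga\cap G_\ga$ is discrete in $G_\ga$, it is a uniform lattice there. I expect this to be the main obstacle; the remaining assertions are bookkeeping with the translation action.

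Granting cocompactness, the open surjection $G_\ga\to G_\ga^\a$ maps $\Ga_\ga\bs G_\ga$ onto $\Ga_\ga^\a\bs G_\ga^\a$, which is therefore compact; being also discrete (as $G_\ga^\a\cong\Z^r$), it is finite, so $\Ga_\ga^\a$ has finite index in $G_\ga^\a$ and is itself isomorphic to $\Z^r$. For the volume I would choose coset representatives of $\Ga_\ga^\a$ in $G_\ga^\a$, say $N=|G_\ga^\a/\Ga_\ga^\a|$ of them, lift them to elements $g_1,\dots,g_N\in G_\ga$, and check that $\bigcup_{j=1}^N g_j(K_\a\cap G_\ga)$ is a strict fundamental domain for $\Ga_\ga$ acting on $G_\ga$: surjectivity of $\Ga_\ga\to\Ga_\ga^\a$ gives that its $\Ga_\ga$-translates cover $G_\ga$, while $\Ga_\ga\cap K_\a=\{1\}$ (torsion freeness) together with the distinctness of the chosen representatives gives disjointness. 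Since $G_\ga$ is unimodular (shown above) and $\vol(K_\a\cap G_\ga)=1$, each of the $N$ pieces has volume $1$, whence $\vol(\Ga_\ga\bs G_\ga)=N=|G_\ga^\a/\Ga_\ga^\a|$, a natural number.
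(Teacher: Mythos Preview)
Your argument is correct and, for the torsion and volume computations, follows essentially the same path as the paper: identify $\Ga_{\ga,\tors}$ with the kernel $\Ga_\ga\cap K_\a$ of the translation map (finite because discrete $\cap$ compact), and read off the volume from the identification $G_\ga/(K_\a\cap G_\ga)\cong G_\ga^\a\cong\Z^r$. One small remark: the equality $\Ga_\ga\cap K_\a=\Ga\cap K_\a$ you write in passing is not needed and is not obvious in general; all you use is $\Ga_\ga\cap K_\a\subset\Ga\cap K_\a$, which is immediate, and both sides are trivial here anyway since $\Ga$ is torsion-free.

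The one genuine difference is how you obtain finiteness of $[G_\ga^\a:\Ga_\ga^\a]$. The paper simply invokes the trace formula for the uniform lattice $\Ga$ to conclude $\vol(\Ga_\ga\bs G_\ga)<\infty$, and then passes to the discrete quotient $G_\ga^\a$ to deduce finite index. You instead give the classical direct proof that $\Ga_\ga$ is cocompact in $G_\ga$ (compact $\Omega$ with $G=\Ga\Omega$, finitely many $\Ga$-conjugates of $\ga$ in $\Omega\ga\Omega^{-1}$, etc.). Your route is more self-contained and yields the slightly stronger statement of cocompactness rather than just finite covolume; the paper's route is shorter but imports the convergence of orbital integrals from the trace formula as a black box. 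Both lead to the same conclusion, and your explicit fundamental domain $\bigcup_j g_j(K_\a\cap G_\ga)$ makes the volume identity $\vol(\Ga_\ga\bs G_\ga)=|G_\ga^\a/\Ga_\ga^\a|$ transparent, whereas the paper just writes $\vol(\Ga_\ga\bs G_\ga)=\vol(\Ga_\ga\bs G_\ga/(K_\a\cap G_\ga))$ and appeals to Proposition~\ref{prop1.4.6}.
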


\begin{proof}
The kernel of the map $\Ga_\ga\to G^\a$ lies in the compact group $K_\a$, and so is discrete and compact, hence finite.
The other way round, let $g\in G_{\ga,\tors}$, then $g\a=g\Min(\ga)=\Min(\ga)=\a$. Then $g$ acts on $\a$ by a Weyl-group element of finite order. But as  $\ga$ is generic, its translation vector is not fixed by any non-trivial Weyl group element of finite order, it follows that $g\in K_\a$, so in particular $K\cap G_\ga=K_\a\cap G_\ga$.
By the trace formula we know that $\Ga_\ga\bs G_\ga$ has finite volume for every $\ga\in\Ga$.
With the given normalization of Haar measure we get $\vol(\Ga_\ga\bs G_\ga)=\vol(\Ga_\ga\bs G_\ga/K_\a\cap G_\ga)$.
As $G_\ga^\a=G_\ga/K_\a\cap G_\ga$, the claim follows from Proposition \ref{prop1.4.6}.
\end{proof}

\begin{definition}
By a \e{Hecke function} on $G$ we mean a function $f:G\to\C$ which is locally constant and of compact support.
\end{definition}

\begin{proposition}\label{prop2.3.2}
Let $R$ denote the right translation representation of $G$ on the Hilbert space $L^2(\Ga\bs G)$.
Assume that the Hecke function $f$ is supported in the open set $G^\gen$. Then the operator $R(f)$ is trace class and its trace can be written as
$$
\tr R(f)= \sum_{[\ga]\subset\Ga^\gen} \left|G_\ga^\a/\Ga_\ga^\a\right|\,
\CO_\ga(f).
$$
\end{proposition}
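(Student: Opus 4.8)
The plan is to run the standard Selberg-type computation: expand $R(f)$ as an integral operator on $L^2(\Ga\bs G)$, write down its kernel, take the trace by integrating the kernel over the diagonal, and reorganize the resulting sum over $\Ga$ into a sum over conjugacy classes, with the volume factor $|G_\ga^\a/\Ga_\ga^\a|$ coming out of Lemma \ref{lem3.1.1}. First I would recall that $R(f)$ acts by $(R(f)\varphi)(x)=\int_G f(y)\varphi(xy)\,dy=\int_G f(x^{-1}y)\varphi(y)\,dy$, which after unfolding over $\Ga$ becomes an integral operator with kernel $k_f(x,y)=\sum_{\ga\in\Ga} f(x^{-1}\ga y)$ on $\Ga\bs G\times\Ga\bs G$. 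Since $f$ is a Hecke function supported in $G^\gen$, for fixed $x,y$ in a compact set only finitely many $\ga$ contribute, so $k_f$ is well-defined and continuous; because $\Ga$ is cocompact and torsion-free, $\Ga\bs G$ is compact and the operator is Hilbert--Schmidt, in fact trace class (the kernel is locally constant of the right support, so one can dominate it by a finite sum of rank-one operators; alternatively invoke the standard fact for cocompact lattices, \cite{HA2}). Then $\tr R(f)=\int_{\Ga\bs G} k_f(x,x)\,dx=\int_{\Ga\bs G}\sum_{\ga\in\Ga} f(x^{-1}\ga x)\,dx$.

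Next I would group the sum over $\ga$ into $\Ga$-conjugacy classes. Writing $[\ga]$ for the $\Ga$-conjugacy class of $\ga$ and $\Ga_\ga$ for its centralizer in $\Ga$, the inner sum becomes $\sum_{[\ga]}\sum_{\delta\in\Ga_\ga\bs\Ga} f(x^{-1}\delta^{-1}\ga\delta x)$, and the usual unfolding $\int_{\Ga\bs G}\sum_{\delta\in\Ga_\ga\bs\Ga}\Phi(\delta x)\,dx=\int_{\Ga_\ga\bs G}\Phi(x)\,dx$ turns the $\ga$-term into $\int_{\Ga_\ga\bs G} f(x^{-1}\ga x)\,dx$. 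Only classes with $[\ga]\subset\Ga^\gen$ contribute, since $f$ is supported in $G^\gen$ and the property of being generic is conjugation-invariant; note also that a nontrivial generic element has no fixed point, so it is never $\Ga$-conjugate to the identity and the "identity term" that usually needs separate treatment simply does not appear here. I would then factor $\int_{\Ga_\ga\bs G}=\int_{G_\ga\bs G}\int_{\Ga_\ga\bs G_\ga}$, which is legitimate because both $G_\ga$ (Proposition \ref{prop1.4.6}, via Lemma \ref{lem2.3.1}) and $\Ga_\ga\cong\Z^r$ (Lemma \ref{lem3.1.1}) are unimodular, so $G_\ga$ carries a $\Ga_\ga$-invariant measure and the double coset space decomposes. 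Since $f(x^{-1}\ga x)$ is constant in the $\Ga_\ga\bs G_\ga$ variable (that factor centralizes $\ga$), that inner integral contributes exactly $\vol(\Ga_\ga\bs G_\ga)=|G_\ga^\a/\Ga_\ga^\a|$ by Lemma \ref{lem3.1.1}, with the Haar measure on $G_\ga$ normalized as in that lemma (so that $\vol(K_\a\cap G_\ga)=1$, matching the normalization used to define $\CO_\ga(f)$). What remains is $\int_{G_\ga\bs G} f(x^{-1}\ga x)\,dx=\CO_\ga(f)$, and collecting everything gives the stated formula.

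The main point requiring care — and the step I expect to be the principal obstacle — is convergence: one must check that the sum over conjugacy classes is finite (equivalently, that only finitely many classes $[\ga]$ meet the support of $f$ up to conjugacy) and that each orbital integral $\CO_\ga(f)$ converges, so that the interchange of summation and integration in the unfolding is justified. For a cocompact $\Ga$ this is the classical finiteness argument: the support of $f$ is compact, $G/\Ga$ is compact, and the intersection of the (closed, by part (b) of the Proposition on orbital integrals) conjugacy orbits with a fixed compact set meets only finitely many $\Ga$-orbits; here the hypothesis that $X$ is periodic (hence $\Ga$ cocompact) is exactly what is needed, and the convergence of $\CO_\ga(f)$ for $\ga$ in a uniform lattice is precisely the fact cited in the remark after the orbital-integral proposition. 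I would also double-check that the properness statement of that proposition is what makes each $\CO_\ga(f)$ a finite number (the preimage of the compact support of $f$ under $\pi_\ga$ is compact, and $f$ is bounded), closing the estimate. The rest is bookkeeping with Haar measures, all of which is pinned down by the normalization in Lemma \ref{lem3.1.1}.
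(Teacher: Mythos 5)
Your argument is correct and is essentially the paper's proof written out in full: the paper simply cites the trace formula for cocompact lattices (\cite{DiscLef}, \cite{HA2}) together with Lemma \ref{lem3.1.1}, and your kernel computation, unfolding over conjugacy classes, and identification of $\vol(\Ga_\ga\bs G_\ga)=\left|G_\ga^\a/\Ga_\ga^\a\right|$ via Lemma \ref{lem3.1.1} is exactly what those references supply. (One small simplification available to you: since $f$ is bi-invariant under some compact open $K'$ and $L^2(\Ga\bs G)^{K'}$ is finite-dimensional for cocompact $\Ga$, the operator $R(f)$ has finite rank, so trace-class is immediate.)
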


\begin{proof} This follows from the trace formula (see \cites{DiscLef,HA2}) and Lemma \ref{lem3.1.1}.
\end{proof}

\begin{definition}
Fix a spherical chamber $\CC\subset\partial X$ an opposite $\ol \CC$ and the corresponding groups $P,L,M,H$ and $A=L/M$ as before (see Lemma \ref{lem1.16}).
Let $A^-$ denote the open cone in $A$ of elements translating away from $\CC$ and towards $\ol \CC$ and let $L^-$ be its pre-image in $L$.
Let $\Ga\subset G$ be a discrete and cocompact subgroup, then  the representation $R$ of $G$ on the space $L^2(\Ga\bs G)$ is admissible.
\end{definition}

Let $K=K_C$ the point-wise stabilizer of a chamber $C$ in $\a$.
Note that since $M\subset K$ the double quotient $KaK$ is well-defined for $a\in A$.

\begin{lemma}\label{lem2.3.4}
Suppose that $X$ is weakly symmetric.
Let $\a$ be the unique apartment with $\CC,\ol\CC\in\partial\a$.
Fix a chamber $C$ in $\a$ and let $K=K_C$ the point-wise stabilizer of $C$ in $G$.
Normalize the Haar measure on $G$ so that $\vol(K)=1$.
Then for $a,b\in A^-$ one has
$$
\1_{KaK}*\1_{KbK}=\1_{KabK}.
$$
\end{lemma}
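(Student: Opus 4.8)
The plan is to reduce the stated Hecke identity to a purely combinatorial statement about the building and then use the hypothesis $a,b\in A^-$ to establish that statement geometrically. As a first step I would unwind the convolution. Both $\1_{KaK}$ and $\1_{KbK}$ are bi-$K$-invariant, and choosing finite left coset decompositions $KaK=\bigsqcup_i\al_iK$ and $KbK=\bigsqcup_j\beta_jK$ (finite, since $X$ is locally finite and $K$ is open), one computes, using the left $K$-invariance of $\1_{KbK}$ and $\vol(K)=1$, that
$$
(\1_{KaK}*\1_{KbK})(g)=\#\{(i,j):\al_i\beta_jK=gK\}.
$$
Hence it suffices to prove (i) $KaK\cdot KbK\subseteq KabK$, and (ii) the product map $(\al_iK,\beta_jK)\mapsto\al_i\beta_jK$ is injective and $|KaK/K|\cdot|KbK/K|=|KabK/K|$. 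Indeed, by (i) every coset $\al_i\beta_jK$ lies in $KabK$, and by (ii) these cosets are $|KabK/K|$ in number and pairwise distinct, hence exhaust $KabK/K$; so the displayed count is $1$ when $gK\subseteq KabK$ and $0$ otherwise, which is exactly $\1_{KaK}*\1_{KbK}=\1_{KabK}$.

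The second step is the geometric dictionary furnished by weak symmetry. Writing $[D]_C$ for the relative position class of a chamber $D$ with respect to $C$ (Definition \ref{defrelpos}), weak symmetry says $K$ acts transitively on $[D]_C$; a short argument (lifting isomorphisms of apartments by elements of $K$, which fix $C$ pointwise and hence preserve relative position classes) then shows that $gC$ lies in the class of $aC$ if and only if $g\in KaK$, and that $gK\mapsto gC$ is a bijection $KaK/K\to[aC]_C$, and likewise for $b$ and $ab$. Under this dictionary a coset $\al_iK$ corresponds to the far end $D_i$ of a minimal gallery issuing from $C$ whose combinatorial type is that of the minimal gallery from $C$ to $aC$ inside the apartment $\a$; applying $\al_i$, the chamber $\al_i\beta_jC$ is the far end of a minimal gallery issuing from $D_i$ of the type of $C\rightsquigarrow bC$. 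Thus (i) amounts to the assertion that concatenating an ``$a$-type'' gallery $C\rightsquigarrow D_i$ with a ``$b$-type'' gallery $D_i\rightsquigarrow\al_i\beta_jC$ yields a minimal gallery from $C$ of the composite ``$ab$-type'', and (ii) amounts to the assertion that every ``$ab$-type'' gallery from $C$ arises from a unique such concatenation, i.e.\ that the intermediate chamber $D_i$ is determined by the endpoint.

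The crux — and the step I expect to be the main obstacle — is this no-folding statement, and here the hypothesis $a,b\in A^-$ is indispensable: because $a$ and $b$ translate $\a$ \emph{consistently} away from the cusp $\CC$ and towards the opposite cusp $\ol\CC$, the galleries $C\rightsquigarrow aC$ and $C\rightsquigarrow bC$ move monotonically in the $\ol\CC$-direction, so at the junction chamber $D_i$ the incoming segment arrives from the $\CC$-ward side and the outgoing segment leaves on the $\ol\CC$-ward side, and the concatenation cannot fold back on itself. To turn this picture into a proof in the present generality (an arbitrary, not necessarily Bruhat--Tits, affine building) I would use the retraction $\rho=\rho_{\a,\ol\CC}$ of $X$ onto $\a$ centered at the chamber $\ol\CC$ at infinity (see \cite{Bldgs}): it is type-preserving and non-expanding on galleries, it restricts to an isomorphism on every apartment containing a subsector of $\a$ pointing to $\ol\CC$, and one checks that it collapses any ``$a$-type'' gallery from $C$ onto the segment $C\rightsquigarrow aC$ of $\a$. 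This yields (i) together with the surjective half of (ii), and it identifies the intermediate chamber canonically as the unique chamber along the gallery mapping under $\rho$ to $aC$, giving injectivity and hence (ii). Equivalently, the whole argument can be run group-theoretically from an Iwahori-type factorization $K=(K\cap\ol H)\,M\,(K\cap H)$, with $\ol H=H_{\ol\CC}$, together with the contraction estimates $a(K\cap H)a^{-1}\subseteq K\cap H$ and $a^{-1}(K\cap\ol H)a\subseteq K\cap\ol H$ for $a\in A^-$, which are precisely the group-theoretic form of the no-folding fact; deriving this factorization and these estimates from the bare geometry of \S\ref{sechoro}--\S\ref{Levi} is the delicate point.
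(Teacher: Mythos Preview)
Your proposal is a self-contained geometric argument, whereas the paper's proof consists of a single sentence: it cites \cite{Parkinson}*{Corollary 3.6} and nothing more. So the two are entirely different in presentation, though not in substance: Parkinson's result (and its proof) is precisely the Hecke-algebra identity $e_ve_w=e_{vw}$ when lengths add, established by the same gallery / retraction machinery you sketch. In effect you are reconstructing the content of the citation.

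Your reduction of the convolution to the counting problem is correct, and identifying the ``no-folding'' step as the crux is exactly right. The retraction $\rho_{\a,\ol\CC}$ centered at the chamber at infinity is the standard tool here and does what you claim: it sends any gallery of the $a$-type issuing from $C$ onto the segment $C\rightsquigarrow aC$ in $\a$, which yields both the containment $KaK\cdot KbK\subseteq KabK$ and the uniqueness of the intermediate chamber. One point that deserves a line of care is the dictionary step $gK\mapsto gC$ giving a bijection $KaK/K\to[aC]_C$: since $K=K_C$ is the \emph{pointwise} stabilizer, you need that elements of $KaK$ are type-preserving on $C$ (which holds here because $a$ acts by translation on $\a$), so that $gC=hC$ as chambers forces $h^{-1}g$ to fix $C$ pointwise. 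The alternative route via an Iwahori-type factorization $K=(K\cap\ol H)M(K\cap H)$ and the contraction inclusions is also viable, but, as you note, extracting it from the bare geometry of \S\ref{sechoro}--\S\ref{Levi} in this generality is the part that would need genuine work; the retraction argument is cleaner.

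What each approach buys: the paper's citation is short and defers the combinatorics to a source that handles general (not necessarily Bruhat--Tits) locally finite affine buildings; your argument is self-contained and makes transparent exactly where the hypotheses ``weakly symmetric'' and ``$a,b\in A^-$'' enter.
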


\begin{proof}
This follows from Corollary 3.6 of \cite{Parkinson}.
\end{proof}

\begin{definition}
The lemma implies that for any given admissible representation $(\pi,V_\pi)$ we get a finite dimensional representation $\pi_K$ of the semi-group $A^-$ on the space $V_\pi^K$ defined by
$$
\pi_K(a)=\pi\(\1_{KaK}\).
$$
\end{definition}

\begin{definition}
For any measurable subset $M\subset G$ we consider the index
$$
[\ga:M]=\vol\(\{x\in G/G_\ga:x\ga x^{-1}\in M \}\).
$$
This is of particular interest in the case $M=KaK$ for a compact open subgroup $K$, in which case the index is an integer multiple of $\vol_{G/G_\ga}(KG_\ga/G_\ga)$.
\end{definition}

\begin{theorem}[Lefschetz Formula]\label{5.1}
Let $X$ be weakly symmetric, let $K=K_C$ and normalize the Haar measure on $G$ such that $\vol(K)=1$.
Let $\Ga\subset G$ be a discrete cocompact torsion-free subgroup of $G$.
For $\ga\in\Ga$ normalize the measure on $G_\ga$ by $\vol(K\cap G_\ga)=1$. Then for every $a\in A^-$ the index $[\ga:KaK]$ is an integer and  one has
$$
\tr R_K(a)=\sum_{[\ga]}\left|G_\ga^\a/\Ga_\ga^\a\right|\,[\ga:KaK],
$$
where the sum on the right runs over all conjugacy classes $[\ga]$ in $\Ga$.
In particular it follows that the number $\tr R_K(a)$ is a positive integer.
\end{theorem}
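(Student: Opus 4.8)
The plan is to deduce the Lefschetz formula from the trace formula of Proposition \ref{prop2.3.2} by choosing the Hecke function $f$ to be $\1_{KaK}$ and then verifying that all the hypotheses and identifications line up. First I would check that $\1_{KaK}$ is supported in $G^\gen$: since $a\in A^-$ translates along $\a$ in a direction that is not parallel to any wall (this is exactly what being in the open cone $A^-$ inside $A=L/M$ buys us, after noting that $M\subset K$ so $KaK$ makes sense), every element of $KaK$ has $\Min$ equal to an apartment, and in fact $KaK\subset G^\gen$. Here I would lean on the relative-position description of $K$-double cosets used in the proof of Proposition 1.4.5(a): $gK_C$ consists of generic elements of length $l(g)$ when $C\subset\Min(g)$, and the same relative-position argument handles $KaK=K_Ca K_C$. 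Once the support condition holds, Proposition \ref{prop2.3.2} applies and gives
$$
\tr R(\1_{KaK})=\sum_{[\ga]\subset\Ga^\gen}\left|G_\ga^\a/\Ga_\ga^\a\right|\,\CO_\ga(\1_{KaK}).
$$

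Next I would identify the two sides with the quantities in the theorem. On the spectral side, $R(\1_{KaK})$ is, by definition of $\pi_K$ applied to the representation $R$ on $L^2(\Ga\bs G)$, precisely $R_K(a)$ acting on the $K$-fixed vectors; here Lemma \ref{lem2.3.4} (weak symmetry, so the Hecke algebra relation $\1_{KaK}*\1_{KbK}=\1_{KabK}$ holds) guarantees that $a\mapsto R_K(a)$ really is a semigroup representation of $A^-$, and admissibility of $R$ (noted in the definition preceding Lemma \ref{lem2.3.4}) makes $V_R^K$ finite dimensional, so the trace is finite and equals $\tr R_K(a)$. On the geometric side, $\CO_\ga(\1_{KaK})=\int_{G/G_\ga}\1_{KaK}(x\ga x^{-1})\,dx=\vol(\{x\in G/G_\ga: x\ga x^{-1}\in KaK\})=[\ga:KaK]$, which is exactly the index from the definition just before the theorem; the cited remark there (or the computation in Proposition 2.3.1(b)) shows this is a finite integer multiple of $\vol_{G/G_\ga}(KG_\ga/G_\ga)$, and with the normalization $\vol(K\cap G_\ga)=1$ together with Lemma \ref{lem3.1.1} one sees the basic volume unit is $1$, so $[\ga:KaK]\in\Z_{\ge0}$. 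I should also note that only conjugacy classes meeting $\Ga^\gen$ contribute, but for $\ga$ not generic the index $[\ga:KaK]$ vanishes automatically since $KaK\subset G^\gen$ and conjugacy preserves $G^\gen$, so the sum over all $[\ga]$ in $\Ga$ in the theorem agrees with the sum over $[\ga]\subset\Ga^\gen$ in the proposition.

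Assembling these identifications yields
$$
\tr R_K(a)=\sum_{[\ga]}\left|G_\ga^\a/\Ga_\ga^\a\right|\,[\ga:KaK],
$$
and since every summand is a nonnegative integer (the coefficient $\left|G_\ga^\a/\Ga_\ga^\a\right|$ being a natural number by Lemma \ref{lem3.1.1}) and the trivial class $\ga=1$ contributes $[\1:KaK]=\vol(KaK)>0$, the left side is a positive integer. The main obstacle I anticipate is the verification that $KaK\subset G^\gen$ with the uniform length $l(\cdot)$ on it and, relatedly, the bookkeeping showing $\CO_\ga(\1_{KaK})$ equals the combinatorial index $[\ga:KaK]$ as an integer: one must make sure the Haar measure normalizations on $G$ (via $\vol(K)=1$) and on each $G_\ga$ (via $\vol(K\cap G_\ga)=1=\vol(K_\a\cap G_\ga)$, using Lemma \ref{lem3.1.1} that these coincide for generic $\ga$) are mutually consistent, so that the integer structure is genuine and not an artifact. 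Everything else is a matter of citing Propositions \ref{prop2.3.2}, \ref{prop1.4.6} and Lemmas \ref{lem2.3.4}, \ref{lem3.1.1}.
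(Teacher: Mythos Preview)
Your approach is essentially identical to the paper's: set $f=\1_{KaK}$, check the support lies in $G^\gen$, apply Proposition~\ref{prop2.3.2}, and identify the orbital integral with the index $[\ga:KaK]$ via the chosen Haar normalizations. The paper is terser but follows exactly this route.

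One genuine slip: your positivity argument is wrong. For $\ga=1$ one has $x\ga x^{-1}=1$ for every $x$, and since $a\in A^-$ is a nontrivial translation we have $1\notin KaK$, so $[\1:KaK]=0$, not $\vol(KaK)$. Positivity of $\tr R_K(a)$ is seen on the spectral side instead: the constant function lies in $L^2(\Ga\bs G)^K$ and $R(\1_{KaK})$ sends it to $\vol(KaK)\cdot 1$, so this summand alone contributes $|K\bs KaK|\ge 1$ to the trace; alternatively, on the geometric side one must exhibit some $\ga\in\Ga$ conjugate into $KaK$, which follows from cocompactness (the $K$-double coset $KaK$ meets every $\Ga$-orbit on $G$ up to finitely many, and a pigeonhole on the finitely many $\Ga$-orbits of chambers produces such a $\ga$). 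Either way, the identity class does not do the job.
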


\begin{proof}
We normalize the Haar measure on $G$ such that $\vol(K)=1$ and the Haar measure on $G_\ga$ such that $\vol(K\cap G_\ga)=1$. 
Then for $a\in A^-$ one gets
\begin{align*}
\CO_\ga(\1_{KaK})&=
\int_{G/G_\ga}\1_{KaK}(x\ga x^{-1})\,dx\\
&=\vol_{G/G_\ga}\(\{x\in G/G_\ga: x\ga x^{-1}\in KaK\}\)\\
&=\left|\{ x\in K\bs G/G_\ga: x\ga x^{-1}\in KaK\}\right|\\
&=[\ga:KaK].
\end{align*}
Let
$
f=\1_{KaK}.
$
Then $f$ is a Hecke function and  Proposition \ref{prop2.3.2} applied to $f$ gives that $\tr R(f)$ equals the right hand side of the Lefschetz formula. It obviously also equals the left hand side.
\end{proof}

\begin{lemma}
If $\ga\in \Ga$ is conjugate to an element of $KaK$ with $a\in A^-$, then $\ga $ is generic and $a$ is uniquely determined by $\ga$.
\end{lemma}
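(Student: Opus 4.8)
The plan is to analyze what it means for $\ga\in\Ga$ to be conjugate into $KaK$ with $a\in A^-$, and to extract from this both the genericity of $\ga$ and the uniqueness of $a$. Since conjugation does not change any of the relevant invariants, I may assume $\ga\in KaK$ itself. The element $a\in A^-\subset L=P\cap\ol P$ acts on the apartment $\a$ (the unique apartment with $\CC,\ol\CC\in\partial\a$) by a translation, and because $a$ lies in the \emph{open} cone $A^-$ of elements translating away from $\CC$ and towards $\ol\CC$, its translation vector is not parallel to any wall of any chamber — that is, $a$ is generic in the sense of the paper, with $\Min(a)=\a$ by the proposition characterizing $\Min$ of generic elements. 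The first step is therefore to show that membership of $\ga$ in $KaK$ forces $\ga$ to be generic with $\Min(\ga)$ an apartment translated by the same vector as $a$.

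For this, the key point is that $K=K_C$ is the pointwise stabilizer of the chamber $C\subset\a$, so writing $\ga=k_1 a k_2$ with $k_i\in K$, I would look at the geodesic from a point $x\in C$ to $\ga x$. Since $k_2$ fixes $C$ pointwise, $\ga x = k_1 a x$, and $ax$ lies in $\a$ translated from $x$ by the generic vector $v_a = a x - x$; applying $k_1$ (which fixes $C$) one controls $d(x,\ga x)$ in terms of $|v_a|$. More robustly, I would use the weak symmetry hypothesis together with Lemma \ref{lem2.3.4}: the semigroup law $\1_{KaK}*\1_{KbK}=\1_{KabK}$ shows that $\ga^n\in Ka^nK$ for all $n\ge 1$, so the displacement $d(x,\ga^n x)$ grows linearly like $n|v_a|$ while remaining within bounded distance of the line through $x$ in direction $v_a$ inside $\a$; this produces a $\ga$-invariant axis and shows $l(\ga)=|v_a|>0$, hence $\ga$ is hyperbolic, and in fact $\Min(\ga)$ contains an apartment translated by $v_a$, which being generic forces $\Min(\ga)$ to be a single apartment — so $\ga$ is generic.

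Once $\ga$ is known to be generic, uniqueness of $a$ follows from the structure of generic hyperbolic elements: the translation vector $v_\ga := \ga x - x$ for $x\in\Min(\ga)$ is a conjugation invariant of $\ga$ (if $\ga' = h\ga h^{-1}$ then $\Min(\ga') = h\Min(\ga)$ and $v_{\ga'}$ is the image of $v_\ga$ under the isometry $h$), and from $\ga\in KaK$ one reads off $v_\ga = v_a$ up to the $K$-action, which fixes $C\subset\a$ and hence fixes $\a$ and its translation vectors. So $a$ and $a'$ both accounting for $\ga$ would have the same translation vector in $\a$; but the map $A=L/M\to T_\a$ is injective (the translation group of $\a$ receives $A$ faithfully, since $M$ is exactly the kernel of the action on $\a$), and distinct elements of $A^-$ project to distinct translations, so $a=a'$ in $A$.

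The main obstacle I anticipate is the first step: cleanly deducing from $\ga\in KaK$ that $\ga$ is hyperbolic with the right axis, rather than merely that some displacement estimate holds at the single chamber $C$. The subtlety is that $a\in KaK$ need not have $\Min$ passing through $C$ in an obvious way, and an arbitrary representative $k_1 a k_2$ could a priori have a shorter displacement somewhere far from $C$; this is precisely where weak symmetry and the Hecke-algebra identity of Lemma \ref{lem2.3.4} do the real work, by letting one iterate and force genuine linear growth of displacement along a line, ruling out any elliptic or non-generic behavior. Everything after that — invariance of the translation vector under conjugation, injectivity of $A\to T_\a$ — is formal and follows from results already established in the paper.
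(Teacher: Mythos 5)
Your argument is essentially correct, but it takes a genuinely different route from the paper's. The paper proves $C_0\subset\Min(x\ga x^{-1})$ by a local geometric observation about the \emph{type} of boundary points: for a translation the geodesic $\ol{x_0,ax_0}$ exits $C_0$ and enters $aC_0$ at faces of complementary type, whereas for a hyperbolic element with $C_0\not\subset\Min$, the entry and exit types agree; since $x\ga x^{-1}\in KaK$ acts on $C_0$ like the translation $a$, this forces $C_0\subset\Min$. Hyperbolicity itself is merely asserted as ``not hard to see.'' Your approach instead iterates the Hecke identity of Lemma \ref{lem2.3.4} to get $\ga^n\in Ka^nK$, computes $d(x_0,\ga^n x_0)=n|v_a|$ exactly at the barycenter $x_0$ of $C$, and deduces in one stroke that $\ga$ fixes no point (hence is hyperbolic in the building sense), that $l(\ga)=|v_a|$, and that $x_0\in\Min(\ga)$. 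This is cleaner in that it proves, rather than asserts, hyperbolicity, and it avoids the type-counting argument entirely; the price is reliance on the weak-symmetry hypothesis through the Hecke algebra law, which the paper's geometric argument uses more lightly.

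Two small imprecisions that do not affect the conclusion but should be fixed if you were to write this up. First, $\ga^n x_0$ does \emph{not} remain near the line $\{x_0+tv_a\}\subset\a$: the elements $k_1^{(n)}\in K$ with $\ga^n x_0=k_1^{(n)}a^n x_0$ can move $a^n x_0$ far from $\a$. What you actually control is only the distance $d(x_0,\ga^n x_0)=n|v_a|$, and the axis of $\ga$ lives in some other apartment; genericity of $\ga$ then comes from the fact that $\ga x_0$ is in the same relative position to $C$ as $ax_0$, so the unique apartment isomorphism fixing $C$ carries $v_a$ to the translation vector of $\ga$ and preserves the wall structure. Second, in the uniqueness step you write that $K$ ``fixes $\a$ and its translation vectors'': this is false ($K=K_C$ fixes only $C$ pointwise and moves $\a$ transitively among apartments through $C$, by weak symmetry). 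The correct statement is that $KaK$ determines the relative-position class of $aC$ with respect to $C$, and within $\a$ there is exactly one chamber in each relative-position class, so $KaK=Ka'K$ with $a,a'\in A^-$ forces $aC=a'C$ and hence $a=a'$.
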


\begin{proof}
Consider the following situation: Let $C_0$ be the fundamental chamber with vertices $v_0,\dots,v_d$ and $\a$ an apartment containing $C_0$, preserved by $A$. Let $T$ be a translation on the apartment $\a$.
Let $x_0$ be the mid-point or barycenter of $C_0$. The geodesic $\ol{x_0,Tx_0}$ leaves $C_0$ at some boundary point $\om$. Let the convex hull of $v_{i_1},\dots,v_{i_s}$ be the smallest face of $C_0$ containing $\om$, we say that $I=\{i_1,\dots,i_s\}$ is the \e{type} of $\om$. Then the entry point of $\ol{x_0,Tx_0}$ into $TC_0$ is of different type, more precisely, complementary type to $\om$. If $g\in G$ is hyperbolic and $C_0$ is not in $\Min(g)$, then the geodesic $\ol{x_0,gx_0}$ joins $C_0$ to $\Min(g)$, then runs in $\Min(g)$ and then joins to $gC_0$. Its entry point has the same type as the exit point.

If now $x\ga x^{-1}\in KaK$ for some $a\in A^-$, then firstly it is not hard to see that $x\ga x^{-1}$, and hence $\ga$, is hyperbolic.
Then $x\ga x^{-1}$ acts on $C_0$ as a translation. By the above argument we conclude that $C_0\subset\Min(x\ga x^{-1})$.
As the translation is generic, $\ga$ is generic. The translation also determines $a\in A^-$.
\end{proof}

\begin{corollary}\label{cor2.3.6}
Assume that $X$ is weakly symmetric.
For every 
function $\phi$ on $A^-$ which satisfies 
$$
\sum_{a\in A^-}\left|\phi(a)\right|\ \left|K\bs KaK\right|<\infty
$$ 
one has
$$
\sum_{a\in
A^-}\phi(a)\, \tr R_K(a)=\sum_{[\ga]\in\CE(\Ga)}\left|G_\ga^\a/\Ga_\ga^\a\right|\,[\ga:KA^- K]\,\phi(a_\ga)
$$
where $\CE(\Ga)$ denotes the set of all conjugacy classes $[\ga]$ in $\Ga$ such that $\ga$ is in $G$ conjugate to an element $ka_\ga k'$ of $KA^-K$.
\end{corollary}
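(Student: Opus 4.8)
The plan is to deduce Corollary \ref{cor2.3.6} from the Lefschetz Formula (Theorem \ref{5.1}) by linearity together with a summability argument that justifies interchanging two sums. First I would write down Theorem \ref{5.1} for a fixed $a\in A^-$, giving
$$
\tr R_K(a)=\sum_{[\ga]}\left|G_\ga^\a/\Ga_\ga^\a\right|\,[\ga:KaK].
$$
Multiplying by $\phi(a)$ and summing over $a\in A^-$ formally yields
$$
\sum_{a\in A^-}\phi(a)\,\tr R_K(a)=\sum_{a\in A^-}\sum_{[\ga]}\phi(a)\left|G_\ga^\a/\Ga_\ga^\a\right|\,[\ga:KaK].
$$
The content of the corollary is then that the double sum on the right reorganizes into a single sum over $\CE(\Ga)$. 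The preceding lemma is exactly what makes this possible: if $[\ga:KaK]\neq 0$ then $\ga$ is conjugate to an element of $KaK$, hence $\ga$ is generic and $a=a_\ga$ is uniquely determined by $\ga$. Thus for each conjugacy class $[\ga]\in\CE(\Ga)$ there is precisely one value of $a$, namely $a_\ga$, contributing a nonzero term, and for $[\ga]\notin\CE(\Ga)$ every term vanishes. Noting that $[\ga:KaK]$ for this unique $a$ equals $[\ga:KA^-K]$ (since $KA^-K$ is the disjoint union of the $KbK$ and only $b=a_\ga$ meets the conjugacy class), the inner structure collapses to
$$
\sum_{[\ga]\in\CE(\Ga)}\left|G_\ga^\a/\Ga_\ga^\a\right|\,[\ga:KA^-K]\,\phi(a_\ga),
$$
which is the claimed right-hand side.

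The step that needs genuine care — and which I expect to be the main obstacle — is the justification of the interchange of summation, i.e.\ absolute convergence of the double sum. This is precisely where the hypothesis $\sum_{a\in A^-}|\phi(a)|\,|K\bs KaK|<\infty$ enters. I would argue as follows: by the Lefschetz Formula applied with $\phi$ replaced by the indicator of a single $a$, all the quantities $\left|G_\ga^\a/\Ga_\ga^\a\right|$ and $[\ga:KaK]$ are nonnegative integers, and summing the right-hand side of Theorem \ref{5.1} over $[\ga]$ reproduces $\tr R_K(a)=\tr R(\1_{KaK})\ge 0$. To control $\tr R_K(a)$ I would observe that $R_K(a)=R(\1_{KaK})$ restricted to the finite-dimensional space $L^2(\Ga\bs G)^K$, whose operator norm is bounded by $\|\1_{KaK}\|_1=\vol(KaK)=|K\bs KaK|$ (using $\vol(K)=1$), so that $|\tr R_K(a)|\le \dim L^2(\Ga\bs G)^K\cdot |K\bs KaK|$; since $\Ga$ is cocompact this dimension is finite and fixed. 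Hence
$$
\sum_{a\in A^-}|\phi(a)|\,|\tr R_K(a)|\le \dim L^2(\Ga\bs G)^K\sum_{a\in A^-}|\phi(a)|\,|K\bs KaK|<\infty,
$$
and because each $\tr R_K(a)$ is itself the convergent nonnegative sum $\sum_{[\ga]}\left|G_\ga^\a/\Ga_\ga^\a\right|\,[\ga:KaK]$, the full double series converges absolutely. With absolute convergence in hand, Fubini/Tonelli for sums permits the reordering carried out above, completing the proof. The only remaining bookkeeping — that $[\ga:KaK]=[\ga:KA^-K]$ for the distinguished $a=a_\ga$ and vanishes for all other $a\in A^-$ — follows directly from the disjointness of the cells $KbK$, $b\in A^-$, inside $KA^-K$, which in turn rests on Lemma \ref{lem2.3.4}.
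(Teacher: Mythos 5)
Your proof is correct and follows essentially the same route as the paper: both establish the bound $|\tr R_K(a)|\le \dim L^2(\Gamma\backslash G)^K\cdot|K\backslash KaK|$ (you via the $L^1$ operator-norm bound, the paper via $|\langle R(x)e_j,e_j\rangle|\le 1$ integrated over $KaK$, which amount to the same estimate), and both then use the hypothesis to justify interchanging the two sums, you by Fubini--Tonelli, the paper by an explicit truncation $\phi_n=\phi\1_{E_n}$ with dominated and monotone convergence. Your additional discussion of why each conjugacy class contributes through exactly one $a=a_\gamma$, with $[\gamma:Ka_\gamma K]=[\gamma:KA^-K]$ via the disjointness of the cells $KbK$, is correct and simply spells out what the paper dismisses as a direct consequence of the theorem (though the disjointness is more properly a standard fact about double cosets in the Hecke algebra, and the collapse of the sum over $a$ rests on the uniqueness lemma preceding the corollary rather than on Lemma \ref{lem2.3.4}).
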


\begin{proof}
This is a direct consequence of the theorem, except for the convergence.
We first secure absolute convergence of the left hand side.
Let $e_1,\dots,e_N$ be an orthonormal basis of $L^2(\Ga\bs G)^K$, then we have
\begin{align*}
|\tr R_K(a)|&=\left|\sum_{j=1}^N\sp{R_K(a)e_j,e_j}\right|\\
&=\left|\sum_{j=1}^N\int_{KaK}\sp{R(x)e_j,e_j}\,dx\right|\\
&\le \sum_{j=1}^N\vol(KaK)=N|K\bs KaK|.
\end{align*}
This grants the convergence of the left hand side.
Replacing $\phi$ by $|\phi|$, it suffices  to show the claim under the condition that $\phi\ge 0$.
Let $E_n\subset A^-$ be finite such that $E_n\subset E_{n+1}$ and $A^-=\bigcup_nE_n$, then for the function $\phi_n=\phi\1_{E_n}$, both sides of the formula will be finite sums, hence converge.
As $n\to\infty$, the left hand side for $\phi_n$ converges to the left hand side for $\phi$ by dominated convergence.
The right hand side actually converges likewise, this time by monotone convergence. Whence the claim.
\end{proof}

\begin{corollary}
The coefficients on the geometric side can also be written as a single cardinality:
$$
\left|G_\ga^\a/\Ga_\ga^\a\right|\,[\ga:KA^- K]
= \left|\{x\in K\bs G/\Ga_\ga: x\ga x^{-1}\in KA^-K\}\right|.
$$
\end{corollary}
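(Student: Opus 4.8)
The plan is to prove the identity by factoring the left-hand side through the intermediate cardinality $\left|\{x\in K\bs G/G_\ga:x\ga x^{-1}\in KA^-K\}\right|$. Specifically, I will show (i) that $[\ga:KA^-K]$ equals this cardinality, and (ii) that the canonical surjection $K\bs G/\Ga_\ga\to K\bs G/G_\ga$ has every fibre over a class with $x\ga x^{-1}\in KA^-K$ of size exactly $|G_\ga^\a/\Ga_\ga^\a|$; multiplying (i) and (ii) then gives the corollary. We may assume $\ga$ is $G$-conjugate into $KA^-K$, since otherwise both sides vanish. Then $\ga$ is generic with $\a=\Min(\ga)$, and by the lemma preceding Corollary~\ref{cor2.3.6} there is a unique $a_\ga\in A^-$ with $\ga$ conjugate into $Ka_\ga K$; hence $\{x:x\ga x^{-1}\in KA^-K\}=\{x:x\ga x^{-1}\in Ka_\ga K\}$, and in particular $[\ga:KA^-K]=[\ga:Ka_\ga K]=\CO_\ga(\1_{Ka_\ga K})$ is finite.

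For (i), set $Z=\{xG_\ga\in G/G_\ga:x\ga x^{-1}\in KA^-K\}$, so that $[\ga:KA^-K]=\vol_{G/G_\ga}(Z)$. Since $k(KA^-K)k^{-1}=KA^-K$ for $k\in K$, the set $Z$ is a disjoint union of left $K$-orbits, each of the form $KxG_\ga/G_\ga\cong K/(K\cap xG_\ga x^{-1})$. The crucial step is the equality
$$
K\cap xG_\ga x^{-1}=x(K_\a\cap G_\ga)x^{-1}.
$$
Indeed, an element of $xG_\ga x^{-1}=G_{x\ga x^{-1}}$ that lies in $K=K_C$ fixes $C$ pointwise; but $C\subset\Min(x\ga x^{-1})=x\a$ (as in the proof of the lemma preceding Corollary~\ref{cor2.3.6}), and $G_{x\ga x^{-1}}$ acts on $x\a$ by translations by Proposition~\ref{prop1.4.6}, so the element acts trivially on $x\a$, i.e.\ lies in $K_{x\a}\cap G_{x\ga x^{-1}}=x(K_\a\cap G_\ga)x^{-1}$; the reverse inclusion is immediate since $C\subset x\a$. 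Conjugating the displayed equality by $x^{-1}$ yields $x^{-1}Kx\cap G_\ga=K_\a\cap G_\ga$, so, using that $G$ is unimodular and that the Haar measures are normalized by $\vol(K)=1$ and $\vol(K_\a\cap G_\ga)=\vol(K\cap G_\ga)=1$ (Lemma~\ref{lem3.1.1}), the quotient-integration formula gives $\vol_{G/G_\ga}(KxG_\ga/G_\ga)=\vol(K)/\vol(x^{-1}Kx\cap G_\ga)=1$ for every $x$ with $x\ga x^{-1}\in KA^-K$. As $\vol_{G/G_\ga}(Z)=[\ga:KA^-K]$ is finite, $Z$ consists of finitely many such orbits, each of volume $1$, and therefore $[\ga:KA^-K]=\left|\{x\in K\bs G/G_\ga:x\ga x^{-1}\in KA^-K\}\right|$.

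For (ii), note that $(kxg)\ga(kxg)^{-1}=k(x\ga x^{-1})k^{-1}$ for $g\in G_\ga$, so the condition $x\ga x^{-1}\in KA^-K$ depends only on the coset $KxG_\ga$ and passes to both $K\bs G/G_\ga$ and $K\bs G/\Ga_\ga$; thus the canonical map restricts to a surjection between the two index sets in the statement. A routine double-coset computation shows its fibre over a class $KxG_\ga$ is in bijection with $(x^{-1}Kx\cap G_\ga)\bs G_\ga/\Ga_\ga$, and by the displayed equality $x^{-1}Kx\cap G_\ga=K_\a\cap G_\ga$. Since $K_\a\cap G_\ga$ is the kernel of the translation homomorphism $G_\ga\to T_\a$ of Proposition~\ref{prop1.4.6}, it is normal in $G_\ga$, so this double-coset set is the quotient group $G_\ga^\a/\Ga_\ga^\a$, of order $|G_\ga^\a/\Ga_\ga^\a|$ and independent of $x$. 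Summing over the finitely many classes in the base and invoking (i) yields the assertion. The main obstacle is the displayed stabilizer identity in (i): one has to verify that requiring an element of the centralizer $G_\ga$ only to fix the chamber $C$ pointwise, rather than all of $\Min(\ga)$, does not enlarge the stabilizer — this is exactly where the type lemma and Proposition~\ref{prop1.4.6} are combined; the remaining manipulations are standard bookkeeping with Haar measures and double cosets.
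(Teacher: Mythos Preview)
Your proof is correct and follows the same strategy as the paper: factor through the projection $K\bs G/\Ga_\ga\to K\bs G/G_\ga$, identify $[\ga:KA^-K]$ with the cardinality of the base (already done in the paper's proof of Theorem~\ref{5.1}), and show each fibre has size $|G_\ga^\a/\Ga_\ga^\a|$. Your argument is considerably more explicit than the paper's one-line proof; in particular, your stabilizer identity $K\cap xG_\ga x^{-1}=x(K_\a\cap G_\ga)x^{-1}$ makes precise why each $K$-orbit in $G/G_\ga$ has volume~$1$ and why the fibre size is independent of the base point, points the paper leaves to the reader.
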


\begin{proof}
The projection $K\bs G/\Ga_\ga\to K\bs G/G_\ga$ is surjective and its fibre is described by $G_\ga/\Ga_\ga$.
\end{proof}

\subsection{A several variable zeta function}\label{sec2.4}
Let $a\in A^-$ and $\a= \Min(a)$. Fix a chamber $C$ in $\a$ and let $K=K_C$.
We normalize the Haar measure in a way that $\vol(K)=1$.

\begin{lemma}\label{lem3.1.1a}
Let $X$ be periodic.
There exists $t>0$ such that $\vol(KaK)\le t^{l(a)}$ holds for every $a\in A^-$.
\end{lemma}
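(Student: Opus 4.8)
The plan is to bound $\vol(KaK) = |K\backslash KaK|$, the number of left $K$-cosets in the double coset, by a quantity that grows at most exponentially in the translation length $l(a)$. First I would observe that $|K\backslash KaK|$ counts the number of chambers $D$ that are ``$a$-related'' to $C$ in the precise sense: writing $a$ as a translation of the apartment $\a$ by a vector $v_a$ with $|v_a| = l(a)$, the coset $KaK$ decomposes as $\bigsqcup_i k_i a K$, and the index $|K\backslash KaK|$ equals the number of chambers in the orbit $K(aC)$, i.e. the number of chambers in the same relative position to $C$ as $aC$ is. By weak symmetry this orbit is all of the relative-position class of $aC$, but here we do not even need weak symmetry; we only need an upper bound on the size of that class.

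Next I would reduce the counting to a combinatorial fact about the building. A chamber $D$ in the same relative position to $C$ as $aC$ is determined, via the distance function $c \mapsto d(c, D)$ on $C$ as in the proof of Proposition 1.4.5(a), by a gallery from $C$ to $D$ whose type is fixed (it is the ``Weyl distance'' type of the pair $(C, aC)$, which depends only on $v_a$ up to the finite symmetry group $\Aut(C)$). The number of chambers reachable from $C$ by a gallery of a given reduced type is bounded by the product of the thicknesses $(q_i - 1)$ over the panels crossed, where the $q_i$ are the (finitely many, since $X$ is periodic hence locally finite with $G\backslash X$ finite) chamber-valencies occurring in $X$. The number of panels crossed in such a minimal gallery is $O(l(a))$: a geodesic from the barycenter of $C$ to the barycenter of $aC$ has Euclidean length comparable to $l(a)$, and it crosses a number of walls proportional to its length, with proportionality constant depending only on the geometry of a single chamber (the diameter and in-radius of the fixed simplex shape), which is finite because $G\backslash X$ is finite. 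Hence $|K\backslash KaK| \le q_{\max}^{c\,l(a)}$ for constants $q_{\max}$ and $c$ depending only on $X$, and one takes $t = q_{\max}^{c}$.

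The main obstacle I expect is making the geometric gallery-counting estimate fully rigorous and uniform in $a$: one must check that the number of walls separating $C$ from $aC$ really is bounded linearly in $l(a)$ with a constant independent of the direction $v_a$, and that each wall-crossing multiplies the count of chambers by at most a fixed $q_{\max}$. The direction-independence follows because all apartments are isometric and the wall system in any apartment is a fixed locally finite hyperplane arrangement (translates of finitely many parallelism classes, with bounded ``cell size'' since $G\backslash X$ is finite), so a segment of length $L$ meets $O(L)$ walls; the uniform thickness bound $q_{\max}$ exists because periodicity forces only finitely many local isomorphism types of panels. Once these two uniformities are in place, the exponential bound is immediate, and the passage from ``number of chambers in the relative-position class'' to $\vol(KaK)$ is just the standard identity $\vol(KaK) = |K\backslash KaK|\cdot\vol(K) = |K\backslash KaK|$ under the normalization $\vol(K) = 1$.
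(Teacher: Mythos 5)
Your proposal is correct and follows essentially the same route as the paper: identify $\vol(KaK)$ with the number of chambers in the $K$-orbit of $aC$, and bound that by the number of chambers reachable within combinatorial distance $O(l(a))$, which grows at most exponentially because periodicity gives a global bound on chamber valencies. The paper's proof is a two-line version of exactly this; your gallery-counting details (uniform thickness bound, linear wall-crossing bound) merely make its ``grows like $t^\delta$'' step precise.
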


\begin{proof}
The volume of $KaK$ equals the number of chambers of the $K$-orbit of $aC$.
As $X$ is periodic, the number of neighbors of a given chamber is globally bounded and so the number 
of neighbors in a distance $\delta$ grows like $t^\delta$ for some $t>0$.
\end{proof}

\begin{definition}\label{def2.4.2}
A choice of \e{types} is a labelling that attaches to each vertex $v$ a label, or type in $\{0,1,\dots,d\}$ such that for each chamber $C$ the set $V(C)$ of vertices of $C$ is mapped bijectively to $\{0,1,\dots,d\}$.

Restricting the labelling gives a bijection between the set of all choices of types and the set of all bijections $V(C_0)\tto\cong\{0,1,\dots,d\}$, where $C_0$ is any given chamber.
Therefore the number of different choices of types is $(d+1)!$. 
We fix a choice of types such that each vertex of type zero is a special vertex.
\end{definition}

Let $C\subset\a$ be a chamber such that a wall $W\subset C$ corresponds to the spherical chamber $\ol\CC$ of  $\partial\a$.
Let $x_0$ be the vertex of $C$ opposite to that wall. We choose $x_0$ as origin to make $\a$ a real vector space.
Then the other vertices of $C$ define a basis $v_1,\dots,v_d$ of $\a$.
Let $e_j=r_jv_j$, where $r_j>0$ is the largest rational number such that all vertices of type zero lie in  $\Z e_1\oplus\dots\oplus\Z e_d$.
For $a\in A^-$, its translation vector $v_a$ satisfies
$$
v_a=\la_1(a)e_1+\dots+\la_d(a)e_d
$$
where the coefficients $\la_1(a),\dots,\la_d(a)$ are natural numbers.
For $u\in \C^d$ write
$$
u^a=u^{\la(a)}=u_1^{\la_1(a)}\cdots u_d^{\la_d(a)}
$$
and set
$$
S(u)=\sum_{[\ga]\in\CE(\Ga)}\left| G_\ga^\a/\Ga_\ga^\a\right|\, [\ga:KA^{-}K]\,u^{\la(a_\ga)}.
$$

\begin{theorem}\label{thm2.1}
Assume that $X$ is weakly symmetric.
There exists $c>0$ such that the series $S(u)$ converges locally uniformly in the set
$$
\{ u\in\C^d:|u_j|< c,\ j=1,\dots,d\}.
$$
It is a rational function in $u$. More precisely, there exists a finite subset $E\subset A$, elements $a_1,\dots, a_r\in A$
and quasi-characters $\eta_1,\dots,\eta_N:A\to\C^\times$ such that
\begin{align*}
S(u)=\sum_{j=1}^N\sum_{e\in E} \frac{\eta_j(e)u^e}{(1-\eta_j(a_1)u^{a_1})\cdots(1-\eta_j(a_r)u^{a_r})}.
\end{align*}
\end{theorem}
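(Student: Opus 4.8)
The plan is to reduce the several‑variable sum $S(u)$ to a trace over the $K$‑fixed vectors of the right regular representation, and then exploit the semigroup structure supplied by Lemma \ref{lem2.3.4} together with admissibility. First I would use Corollary \ref{cor2.3.6}: for a formal variable $u$, take $\phi(a)=u^{\la(a)}$ and observe that the right‑hand side of that corollary is exactly $S(u)$. Before that can be applied one needs the convergence hypothesis $\sum_{a\in A^-}|\phi(a)|\,|K\bs KaK|<\infty$; this is where Lemma \ref{lem3.1.1a} enters. Since $l(a)$ is comparable to $\la_1(a)+\dots+\la_d(a)$ (the translation length is a fixed linear functional of the coordinates, bounded above and below by multiples of $\sum\la_j$), the estimate $\vol(KaK)\le t^{l(a)}$ gives $|K\bs KaK|\le t'^{\sum\la_j(a)}$ for some $t'>0$; hence for $|u_j|<c:=1/t'$ the series converges absolutely and locally uniformly, and the cone $A^-\subset\Z^r$ being finitely generated guarantees the sum is a geometric‑type series in finitely many exponentials. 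This establishes the convergence assertion and lets me pass to the identity
$$
S(u)=\sum_{a\in A^-}u^{\la(a)}\,\tr R_K(a).
$$

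Next I would analyze the right‑hand side using the representation theory. By Lemma \ref{lem2.3.4} the assignment $a\mapsto\1_{KaK}$ is multiplicative on $A^-$ (with $\vol(K)=1$), so $R_K$ is a genuine finite‑dimensional representation of the semigroup $A^-$ on $V:=L^2(\Ga\bs G)^K$. Because $\Ga$ is cocompact, $R$ is admissible, so $\dim V<\infty$ and $R_K$ is a representation of the finitely generated abelian semigroup $A^-$ by commuting operators on a finite‑dimensional space. Choosing generators $a_1,\dots,a_r$ of the cone $A^-$ (more precisely of the group $A\cong\Z^r$, with $A^-$ the relevant cone), the operators $R_K(a_1),\dots,R_K(a_r)$ commute, and $S(u)$ becomes, after writing $u^{\la(a)}$ in terms of the generators, a sum over the semigroup of $u^{\la(a)}R_K(a)$ paired against the identity via the trace. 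Putting the commuting operators in simultaneous block‑upper‑triangular (Jordan) form, their common eigenvalues define the quasi‑characters: each simultaneous generalized eigenvalue assigns to $a_i$ a scalar $\eta_j(a_i)\in\C^\times$ (nonzero because each $R_K(a_i)$ is invertible on $V$, its inverse being $R_K(a_i^{-1})$ when $a_i^{-1}$ still acts — one must be slightly careful here and instead invert within $A$), extended multiplicatively to a character $\eta_j:A\to\C^\times$. The trace then decomposes as a sum over $j$ of $\sum_{a\in A^-}u^{\la(a)}\eta_j(a)$ times the multiplicity, and summing the multivariate geometric series over the cone $A^-$ produces exactly the stated shape: a finite numerator indexed by a finite set $E\subset A$ (coming from the description of the cone $A^-$ as a finite union of translated free sub‑semigroups generated by $a_1,\dots,a_r$), over the product $\prod_i(1-\eta_j(a_i)u^{a_i})$.

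The main obstacle, and the step deserving the most care, is making the passage from $\tr R_K(a)$ to the explicit rational form rigorous: one must justify summing the operator‑valued geometric series $\sum_{a\in A^-}u^{\la(a)}R_K(a)$ in the finite‑dimensional algebra generated by the $R_K(a_i)$, and identify the poles with the eigenvalues. Concretely, the cone $A^-$ need not be the full free semigroup on a set of generators; one writes $A^-$ as a finite disjoint union $\bigsqcup_{e\in E}(e+\N a_1+\dots+\N a_r)$ (a fundamental‑domain / Stanley‑type decomposition of the rational polyhedral cone $A^-\cap A$), and on each such piece the sum factors as $u^e\eta_j(e)\prod_{i=1}^r\sum_{m\ge0}(\eta_j(a_i)u^{a_i})^m$, which converges for $|u_j|<c$ since $|\eta_j(a_i)|$ is bounded (eigenvalues of $R_K(a_i)$ acting on the finite‑dimensional $V$, with norm controlled by $\vol(Ka_iK)$) and converges to $\prod_i(1-\eta_j(a_i)u^{a_i})^{-1}$. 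Collecting the finitely many $j$ (the distinct simultaneous generalized eigencharacters) and the finitely many $e\in E$, and folding the Jordan‑block multiplicities into the numerator, yields the displayed formula. The only genuinely delicate point is that nilpotent parts of the commuting operators contribute higher‑order poles; but since the denominators in the statement are allowed to appear with repetition implicitly absorbed into the sum over $j$ (one may repeat a character $\eta_j$), or alternatively one clears the nilpotent contribution by noting it only changes the numerator polynomials, this causes no real difficulty. I would close by remarking that $S(u)$ being a finite $\C$‑linear combination of such elementary rational functions is itself rational, completing the proof.
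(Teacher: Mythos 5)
Your proposal is correct and follows essentially the same route as the paper: apply Corollary \ref{cor2.3.6} with $\phi(a)=u^{\la(a)}$ (convergence secured by Lemma \ref{lem3.1.1a}), decompose the cone $A^-$ as a finite disjoint union of translates $e+\N_0a_1+\dots+\N_0a_r$ (the paper's Lemma \ref{lem2.20}), simultaneously triangularize the commuting operators $R_K(a)$ to get $\tr R_K(a)=\sum_j\eta_j(a)$, and sum the resulting multivariate geometric series over the cone. Your caveats (the nilpotent parts, which the trace kills anyway, and the possible non-invertibility of $R_K(a_i)$, which the paper glosses over) are sensible but do not change the argument.
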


\begin{proof}
Note that $a\mapsto u^{\la_j(a)}$ is the restriction of a quasi-character on $A$ to $A^-$.
For $u\in\C$ consider the function $\phi_u:A^-\to \C$ defined by
$$
\phi_u(a)=u^{\la(a)}.
$$
By Lemma \ref{lem3.1.1a} there exists $c>0$ such that for the range of $u$ given in the theorem, the Corollary \ref{cor2.3.6} is applicable.
By this corollary, we infer
$$
S(u)=\sum_{a\in A^-}\,\tr R_K(a)\, u^{\la(a)}.
$$

\begin{definition}
Let $V$ denote a $\Q$-vector space of dimension $r\in\N$.
Let $V_\R=V\otimes\R$. A subset $C\subset V_\R$ is called a \e{sharp rational open cone} with $r$ sides if there exist linearly independent elements $\al_1,\dots,\al_r\in\Hom(V,\Q)$ such that
$$
C=\{ v\in V_\R: \al_1(v)>0,\dots,\al_r(v)>0\}.
$$
\end{definition}

\begin{lemma}\label{lem2.20}
Let $V$ denote a $\Q$-vector space of dimension $r\in\N$ and let $C$ be a sharp rational open cone in $V_\R$.
Let $\Sigma\subset V$ be a lattice, i.e., a finitely generated subgroup which spans $V$.
Then there exists a finite subset $E\subset\Sigma$ and elements $a_1,\dots,a_r\in\Sigma$ such that $C\cap\Sigma$ is the set of all $v\in V$ of the form
$$
v=v_0+\nu_1a_1+\dots+\nu_ra_r,
$$
where $v_0\in E$ and $\nu_1,\dots,\nu_r\in\N_0$.
The vector $v_0$ and the numbers $\nu_j\in\N_0$ are uniquely determined by $v$.
\end{lemma}

\begin{proof}
For $j=1,\dots,r$ let $a_j\in\Sigma$ be the unique element such that $\al_i(a_j)=0$ for $i\ne j$ and $\al_j(a_j)$ is strictly positive and minimal.
Then $a_1,\dots,a_r$ is a basis of $V$ inside $\Sigma$, hence it generates a sub-lattice $\Sigma'\subset \Sigma$.
Let $E$ be a set of representatives of $\Sigma/\Sigma'$ which may be chosen such that each $v_0\in E$ lies in $C$, but for every $j=1,\dots,r$ the vector $v_0-a_j$ lies outside $C$.
It is clear that every $v$ of the form given in the statement of the lemma is in $C\cap\Sigma$.

For the converse, let $v\in C\cap \Sigma$. Then there are uniquely determined $v_0\in E$, $\nu_1,\dots,\nu_r\in\Z$ such that $v=v_0+\nu_1a_1+\dots+\nu_ra_r$.
We have to show that $\nu_1,\dots,\nu_r\ge 0$.
Assume that $\nu_j<0$.
Then
$$
0<\al_j(v)=\al_j(v_0)+\nu_j\al_j(a_j)\le\al_j(v_0)-\al_j(a_j)=\al_j(v_0-a_j)
$$
and the latter is $\le 0$, as $v_0-a_j$ lies outside $C$. This is a contradiction!
\end{proof}

We apply this lemma to $V=A\otimes\Q$.
Let $r$ be the rank of $A$, which is the dimension of $V$.
By the lemma there is a finite subset $E\subset A$ and $a_1,\dots,a_r\in A$ such that
$$
A^-=\{ e+\mu_1 a_1+\dots+\mu_ra_r: e\in E, \mu_j\in \N_0\}.
$$
We conclude
\begin{align*}
S(u)&= \sum_{e\in E}\sum_{\mu_1,\dots,\mu_r=0}^\infty \ind(e+\mu_1 a_1+\dots+\mu_ra_r) u^e (u^{a_1})^{\mu_1}\cdots (u^{a_r})^{\mu_r},
\end{align*}
where $\ind(a)=\tr R_K(a)$.

\begin{lemma}
For $K=K_C$ and $a,b\in A^-$ one has
$$
|K\bs KabK|=|K\bs KaK||K\bs KbK|.
$$
So the map $a\mapsto |K\bs KaK|$ extends from $A^-$ to a quasi-character of $A$.
\end{lemma}

\begin{proof}
Recall that we normalize the Haar measure on $G$ so that $\vol(K)=1$, so the claim amounts to
$$
\vol(KabK)=\vol(KaK)\vol(KbK).
$$
Note that $\vol(KaK)=\int_G\1_{KaK}(x)\,dx$ and that integration is an algebra homomorphism from the convolution algebra $L^1(G)$ to $\C$. 
So the lemma follows from Lemma \ref{lem2.3.4}.
\end{proof}

The representation $R_K$ of the abelian semi-group $A^-$ can be simultaneously brought into Jordan normal form, so there is a basis $f_1,\dots,f_N$ of $L^2(\Ga\bs G)^K$ such that
$$
\(R_K(a)-\eta_j(a)\)^Nf_j=0
$$
holds for every $a\in A$ for some quasi-characters $\eta_j:A^-\to (\C,\times)$.
It follows
$$
\tr R_K(a)=\sum_{j=1}^N\eta_j(a).
$$
In particular, as the $\eta_j$ are homomorphisms, we get 
$$
\ind(e+\mu_1 a_1+\dots+\mu_ra_r)=\tr R_K(e+\mu_1 a_1+\dots+\mu_ra_r)=\sum_{j=1}^N\eta(e)\eta(a_1)^{\mu_1}\dots \eta_j(a_r)^{\mu_r}.
$$
It follows
\begin{align*}
S(u)&= \sum_{e\in E}\sum_{\mu_1,\dots,\mu_r=0}^\infty \ind(e+\mu_1 a_1+\dots+\mu_ra_r) u^e (u^{a_1})^{\mu_1}\cdots (u^{a_r})^{\mu_r}\\
&=\sum_{j=1}^N\sum_{e\in E}\sum_{\mu_1,\dots,\mu_r=0}^\infty \eta_j(e) u^e (\eta_j(a_1)u^{a_1})^{\mu_1}\cdots (\eta_j(a_r)u^{a_r})^{\mu_r}\\
&=\sum_{j=1}^N\sum_{e\in E} \frac{\eta_j(e)u^e}{(1-\eta_j(a_1)u^{a_1})\cdots(1-\eta_j(a_r)u^{a_r})}.
\tag*\qedhere
\end{align*}
\end{proof}

%: 
\section{A Geometric approach to zeta functions}
\label{sec3}
In this section we give a different appraoch to the zeta functions, which is completely geometrical, so does not depend on the Harmonic Analysis of the group $G$.
Here we assume the building to be simplicial, or, what means the same, irreducible.
\subsection{Bass's translation operators}

We fix a choice of types and denote it by $v\mapsto \lab(v)\in \{0,\dots,d\}$ where $d=\dim X$.
We will assume that $v_0=\lab^{-1}(0)$ is a special vertex.

\begin{definition}
Let $\CC$ denote the set of all chambers in $X$.
Define the product and sum  spaces
$$
P(\CC)=\prod_{c\in\CC}\C c,\qquad S(\CC)=\bigoplus_{c\in\CC}\C c.
$$
We write a member of either space as a formal sum  $\sum_{c\in\CC}a_c c$ with coefficients $a_c\in\C$ which are supposed to be zero with finitely many exceptions if the element is from $S(\CC)$.
\end{definition}

Pick a chamber $C$ and an apartment $\a$ containing $C$.
Let $v_0,v_1,\dots,v_d$ be the  vertices of $C$ with $\lab(v_j)=j$ for $j=0,\dots,d$.
Pick $v_0$ as origin to give $\a$ the structure of a real vector space.
Let $\Cone(C)$ denote the open cone in the real vector space $\a$ spanned by the open interior $\mathring C$ of the chamber $C$, i.e., 
$$
\Cone(C)=(0,\infty)\cdot \mathring C.
$$
Let $v_1,\dots,v_d$ be the other vertices of $C$ and let 
$$
\La=\bigoplus_{j=1}^d\Z e_j.
$$
where $e_j=r_jv_j$ and $r_j>0$ for each $j=1,\dots,d$ is the largest rational  number such that
$$
\La_0\subset\La,
$$
where $\La_0$ is the lattice of vertices of type zero.

The set $\La^+=\bigoplus_{j=1}^d\N e_j$ is the set of all lattice points inside $\Cone(C)$.
Let $\La_0^+=\La_0\cap\La^+$.
Let $\N^d(\La_0^+)$ denote the set of all 
$k=(k_1,\dots,k_d)\in \N^d$ such that $\sum_{j=1}^d k_je_j$ lies in $ \La_0^+$.
For given $k\in\N^d$ the element $\sum_{j=1}^dk_je_j$ is contained in a unique chamber $C(k)\subset \Cone(C)$ such that $\Cone(C(k))\subset\Cone(C)$ as in the picture.
\begin{center}
\begin{tikzpicture}
\draw(0,0)--(5,5);
\draw(0,0)--(0,5);
\draw(0,1)--(1,1);
\draw(2,4)--(2,5)--(3,5)--(2,4);
\draw(0,-.3)node{$v_0$};
\draw(.7,.2)node{$C$};
\draw(3,4.2)node{$C(k)$};
\draw[dotted](2,4)--(2,2);
\draw[dotted](2,4)--(0,2);
\draw(2.2,1.7)node{$k_1$};
\draw(-.3,1.8)node{$k_2$};
\end{tikzpicture}
\end{center}
We say that the chamber $C(k)$ is \e{in relative position $k$} to $C$.We define an operator
$$
T_k:P(\CC)\to P(\CC)
$$
by
$$
T_k(C)=\sum_{C'} C',
$$
where the sum runs over all chambers $C'$ in relative position $k$ to $C$.
Note that for given $k\in\N^d$ in each apartment $\a$ containing $C$ there is at most one $C'$ in position $k$, but the same $C'$ can lie in infinitely many apartments containing $C$.
As we assume the building to be locally finite, the sum defining $T_k$ is actually finite.
Therefore the operator $T_k$ preserves the subspace $S(\CC)$ of $P(\CC)$.

The next picture shows an example in dimension 1. There are nine chambers $C'$ in the same relative position to $C$.
\begin{center}
\begin{tikzpicture}
\draw(0,0)--(3,0);
\draw(0,0)node{$\bullet$};
\draw(1,0)node{$\bullet$};
\draw(2,0)node{$\bullet$};
\draw(3,0)node{$\bullet$};
\draw(1,0)--(2,1);
\draw(1,0)--(2,-1);
\draw[very thick](2,0)--(3,.3);
\draw[very thick](2,0)--(3,0);
\draw[very thick](2,1)--(3,1);
\draw[very thick](2,1)--(3,.7);
\draw[very thick](2,1)--(3,1.3);
\draw(1,0)--(2,-1);
\draw[very thick](2,0)--(3,-.3);
\draw[very thick](2,-1)--(3,-1);
\draw[very thick](2,-1)--(3,-.7);
\draw[very thick](2,-1)--(3,-1.3);
\draw(2,1)node{$\bullet$};
\draw(2,-1)node{$\bullet$};
\draw(3,.3)node{$\bullet$};
\draw(3,-.3)node{$\bullet$};
\draw(3,.7)node{$\bullet$};
\draw(3,-.7)node{$\bullet$};
\draw(3,1)node{$\bullet$};
\draw(3,-1)node{$\bullet$};
\draw(3,1.3)node{$\bullet$};
\draw(3,-1.3)node{$\bullet$};
\draw(0,-.5)node{$0$};
\draw(2,-1.5)node{$0$};
\draw(2,-.5)node{$0$};
\draw(2,.5)node{$0$};
\draw(1,-.5)node{$1$};
\draw(3,-1.8)node{$1$};
\draw(.5,-1)node{$C$};
\draw(2.5,-2)node{$C'$};
\end{tikzpicture}
\end{center}

\begin{lemma}
Let $\N_0^d(\La_0)$ denote the set of all $k\in\N_0^d$ such that $\sum_{j=1}^dk_je_j\in\La_0$.
For $k,l\in\N_0^d(\La_0)$ we have
$$
T_kT_l=T_{k+l}.
$$
In particular, the operators $T_k$ and $T_l$ commute.
\end{lemma}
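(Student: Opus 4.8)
The plan is to unwind the definition of the composition and reduce everything to a statement about relative positions inside a single apartment, where the building axioms can be applied directly. Applying $T_kT_l$ to a chamber $C$, we get $T_kT_l(C)=\sum_{C'} T_k(C') = \sum_{C'}\sum_{C''}C''$, where $C'$ runs over chambers in relative position $l$ to $C$, and $C''$ over chambers in relative position $k$ to $C'$. What must be shown is that the resulting (finite, since $X$ is locally finite) multiset of chambers $C''$ equals precisely, with multiplicity one, the set of chambers in relative position $k+l$ to $C$. Concretely this splits into three claims: (i) for every $C'$ of position $l$ and every $C''$ of position $k$ relative to $C'$, the chamber $C''$ is in position $k+l$ to $C$; (ii) conversely every chamber $C''$ in position $k+l$ to $C$ arises this way; and (iii) the intermediate chamber $C'$ is uniquely determined, so no chamber is counted twice.

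First I would set up the apartment picture. Recall from the definition of relative position that, given $C$ and a tuple $m\in\N_0^d(\La_0)$, inside any apartment $\a$ containing $C$ there is exactly one chamber $C(m)$ realizing position $m$, and this $C(m)$ is the chamber of $\Cone(C)$ containing the lattice point $\sum_j m_j e_j$; moreover the position class is characterized by the distance function $c\mapsto d(c,\cdot)$ from the vertices of $C$, so it is $K_C$-invariant. Now take $C''$ in position $k+l$ to $C$: choose an apartment $\a$ containing both $C$ and $C''$. Inside $\a$, let $C'=C(l)$ be the unique chamber in position $l$ to $C$. The key geometric fact is additivity of the cone construction inside the fixed euclidean apartment $\a$: the point $\sum_j l_j e_j$ lies in $C'$, the point $\sum_j(k_j+l_j)e_j = \sum_j l_j e_j + \sum_j k_j e_j$ lies in $C''$, and translating the origin to the base vertex of $C'$ exhibits $C''$ as the chamber of $\Cone(C')$ at lattice coordinates $k$ — i.e. $C''$ is in position $k$ to $C'$. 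This gives (i) and (ii) simultaneously: every $C''$ of position $k+l$ decomposes through $C'=C(l)$ in some apartment, and conversely the same computation run forwards shows any such two-step chamber has position $k+l$. Here I should be a little careful that the hypothesis $k,l,k+l\in\N_0^d(\La_0)$ guarantees the relevant lattice points are vertices of type zero, so the "relative position" labels are the ones used to define the $T$'s; this is exactly why the indexing set is $\N_0^d(\La_0)$ rather than all of $\N_0^d$.

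The main obstacle, and the heart of the argument, is (iii): showing the intermediate chamber $C'$ is uniquely determined by the pair $(C,C'')$, so that the double sum has no repetitions. The point is that although $C''$ may lie in many apartments through $C$, the chamber "in position $l$ to $C$, and such that $C''$ is in position $k$ to it" is forced. I would argue as follows: any admissible $C'$ lies, together with $C$, in some apartment $\b$; by the building axiom the isomorphism $\b\to\a$ fixing $C$ pointwise carries $C'$ to the unique position-$l$ chamber of $\a$, and then carries $C''$ (the position-$k$ chamber off $C'$) to the unique position-$(k+l)$ chamber of $\a$ off $C(l)$; since that target is determined, and the original $C''$ is determined, and $d(C'',-)$ together with $d(C,-)$ pins down the geodesic segment from $C$ to $C''$ whose "break point" is $C'$ — uniqueness follows. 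An alternative, cleaner phrasing: the union of chambers on the geodesic from the barycenter of $C$ to the barycenter of $C''$ lies in a single apartment (a geodesic segment lies in an apartment), and in that apartment $C'$ is forced to be $C(l)$; any other apartment containing $C$ and $C''$ restricts to the same data. Once (i)--(iii) are in hand, comparing coefficients of each chamber on both sides of $T_kT_l(C)=T_{k+l}(C)$ gives the identity on the basis $\CC$, hence on all of $P(\CC)$ by linearity; commutativity of $T_k$ and $T_l$ is then immediate since $k+l=l+k$.
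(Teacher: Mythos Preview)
Your proposal is correct and follows essentially the same approach as the paper's (very terse) proof: both reduce the identity to the statement that a chamber $D$ is in position $k+l$ to $C$ if and only if there is a \emph{unique} intermediate chamber $C'$ in position $l$ to $C$ with $D$ in position $k$ to $C'$, and both locate this $C'$ inside any apartment containing $C$ and $D$. Your version simply supplies more of the apartment and building-axiom details that the paper leaves implicit.
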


\begin{proof}
On the one hand, the chamber $T_k(T_l(C))$ is in relative position $k+l$ and on the other, for any chamber $D$ in relative position $k+l$ to $C$ there exist uniquely determined chambers $C_k$ and $C_l$ in relative positions $k$ and $l$ such that each apartment containing $C$ and $D$ contains $C_1$ and $C_2$. This proves the claim.
\end{proof}

\begin{definition}
Let $\CT$ denote the unital subring of $\End(P(\CC))$ generated by the translation operators $T_k$, $k\in\N_0^d(\La_0)$.
This is a commutative integral domain.
Let $\CK$ denote its quotient field.

For indeterminates $u_1,\dots, u_d$ we define the formal power series
$$
T(u)=\sum_{k\in\N^d(\La_0)}u^kT_k\in\CT[[u_1,\dots,u_d]],
$$
where $u^k=u_1^{k_1}\cdots u_d^{k_d}$.
Note that the summation only runs over the set $\N^d(\La_0)$ of all $k\in\N^d$ such that $\sum_{j=1}^dk_je_j\in\La_0$.
\end{definition}

\begin{theorem}\label{thm2.4}
$T(u)$ is a rational function in $u$. In other words, it is an element of $\CK(u)$.

More precisely, there exists a finite set $E\subset \La_0^+$ and $k(e)\in\N_0^d$ for every $e\in E$ as well as $k^{(1)},\dots,k^{(d)}\in \N^d_0$ such that
$$
T(u)=\frac{\sum_{e\in E}u^{k(e)}T_{k(e)}}
{\(1-T_{k^{(1)}}u^{k^{(1)}}\)\cdots \(1-T_{k^{(d)}}u^{k^{(d)}}\)}.
$$
\end{theorem}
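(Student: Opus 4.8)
The plan is to reduce the statement about the power series $T(u) \in \CT[[u_1,\dots,u_d]]$ to a purely combinatorial statement about the monoid $\La_0^+$, exactly in the spirit of Lemma~\ref{lem2.20} of the previous section, and then transport the resulting rational expression back to the ring $\CT$. First I would record the multiplicativity $T_kT_l = T_{k+l}$ for $k,l \in \N_0^d(\La_0)$, already proved in the lemma above; this means the assignment $k \mapsto T_k$ is a monoid homomorphism from $(\N_0^d(\La_0),+)$ into the multiplicative monoid of $\CT$. Consequently $T(u) = \sum_{k \in \N^d(\La_0)} u^k T_k$ can be analyzed summand-by-summand once I have a good parametrization of the index set $\N^d(\La_0)$ (or more precisely its ``positive'' part corresponding to chambers inside $\Cone(C)$).

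Next I would apply Lemma~\ref{lem2.20} with $V = \La \otimes \Q$ (so $r = d$), with the sharp rational open cone $C = \Cone(C) \cap \La_\R$ cut out by the $d$ coordinate functionals $\al_j(v) = k_j$, and with the lattice $\Sigma = \La_0$. The lemma produces a finite set $E \subset \La_0^+$ and elements $a_1,\dots,a_d \in \La_0$ (writing $a_j = \sum_i k^{(j)}_i e_i$ with $k^{(j)} \in \N_0^d$, and $e \leftrightarrow k(e)$ for $e \in E$) such that every element of $\La_0^+ = \Cone(C) \cap \La_0$ is uniquely $v = v_0 + \nu_1 a_1 + \dots + \nu_d a_d$ with $v_0 \in E$, $\nu_j \in \N_0$. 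Translating this decomposition through the monoid homomorphism $k \mapsto T_k$ and the monomial map $k \mapsto u^k$, and using $u^{k+l} = u^k u^l$, the sum $T(u)$ factors as
\begin{align*}
T(u) = \sum_{e \in E} \sum_{\nu_1,\dots,\nu_d \ge 0} u^{k(e)} T_{k(e)} \, \bigl(u^{k^{(1)}} T_{k^{(1)}}\bigr)^{\nu_1} \cdots \bigl(u^{k^{(d)}} T_{k^{(d)}}\bigr)^{\nu_d}.
\end{align*}
Each geometric series in the commuting variable $u^{k^{(j)}} T_{k^{(j)}}$ sums formally to $\bigl(1 - T_{k^{(j)}} u^{k^{(j)}}\bigr)^{-1}$ inside $\CK[[u_1,\dots,u_d]]$, since the $T_k$ all commute and lie in the integral domain $\CT$ with quotient field $\CK$; this yields precisely the claimed closed form, and exhibits $T(u)$ as an element of $\CK(u)$.

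The one point requiring genuine care — and what I expect to be the main obstacle — is matching the geometry of ``chambers in relative position $k$'' with the lattice combinatorics: I must check that the index set over which $T(u)$ is summed is exactly $\La_0^+ = \Cone(C) \cap \La_0$ under the identification $k \leftrightarrow \sum_j k_j e_j$, i.e.\ that the definition of $T(u)$ (summation over $\N^d(\La_0)$) corresponds cleanly to lattice points of the sharp cone, and that the passage through the monoid homomorphism respects the \emph{unique} decomposition provided by Lemma~\ref{lem2.20} so that no chamber is counted with the wrong multiplicity. Once the dictionary between $\N_0^d(\La_0)$, the lattice $\La_0$, and the cone $\Cone(C)$ is set up correctly, the verification that the formal geometric-series manipulation is legitimate in $\CK[[u_1,\dots,u_d]]$ is routine, and the finiteness of $E$ together with $k(e), k^{(j)} \in \N_0^d$ comes for free from the cited lemma.
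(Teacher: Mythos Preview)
Your proposal is correct and follows essentially the same route as the paper: apply Lemma~\ref{lem2.20} to the lattice $\La_0$ inside the open cone $\Cone(C)$ to obtain the unique decomposition $v = e + \nu_1 a_1 + \dots + \nu_d a_d$, then use the monoid homomorphism $k \mapsto T_k$ (i.e.\ $T_kT_l = T_{k+l}$) together with $u^{k+l} = u^k u^l$ to factor the sum into a finite sum of products of geometric series. The ``main obstacle'' you flag---identifying the summation set $\N^d(\La_0)$ with $\La_0^+$ so that Lemma~\ref{lem2.20} applies cleanly---is handled in the paper exactly as you anticipate, via the dictionary $v \leftrightarrow k(v)$ with $v = \sum_j k_j e_j$.
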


\begin{proof}
By Lemma \ref{lem2.20} there exist $a_1,\dots,a_d\in\La_0^+$ and a finite set $E\subset \La_0^+$ such that the map
\begin{align*}
E\times\N_0^d&\to \La_0^+,\\
(e,\nu)&\mapsto e+\nu_1 a_1+\dots+\nu_da_d
\end{align*}
is a bijection.
For each $v\in \Lambda_0$ we write $k(v)$ for the vector $k\in\Z^d$ such that $v=k_1e_1+\dots+k_de_d$.
We compute
\begin{align*}
T(u) &=\sum_{k\in\N^d(\La_0)}u^kT_k\\
&=\sum_{e\in E}u^{k(e)}T_{k(e)}\sum_{\nu\in\N_0^d}u^{k(\nu_1a_1+\dots+\nu_da_d)}T_{k(\nu_1a_1+\dots+\nu_da_d)},
\end{align*}
where we simply used the bijection above to describe the summation.
The computation goes on
\begin{align*}
T(u) &=\sum_{k\in\N^d(\La_0)}u^kT_k\\
&=\sum_{e\in E}u^{k(e)}T_{k(e)}\sum_{\nu\in\N_0^d}u^{k(\nu_1a_1+\dots+\nu_da_d)}T_{k(\nu_1a_1+\dots+\nu_da_d)}\\
&=\sum_{e\in E}u^{k(e)}T_{k(e)}\sum_{\nu_1,\dots,\nu_d=0}^\infty \(u^{k(a_1)}T_{k(a_1)}\)^{\nu_1}
\cdots \(u^{k(a_d)}T_{k(a_d)}\)^{\nu_d}\\
&=\frac{\sum_{e\in E}u^{k(e)}T_{k(e)}}
{\(1-T_{k(a_1)}u^{k(a_1)}\)\cdots \(1-T_{k(a_d)}u^{k(a_d)}\)}.\mqed
\end{align*}
\end{proof}

\subsection{Cocompact lattices}
\begin{definition}
A \e{lattice} on $X$, or an $X$-lattice, is a lattice $\Ga$ in the automorphism group $G=\Aut(X)$.
So it is a discrete subgroup $\Ga\subset G$ such that the quotient $\Ga\bs G$ carries a $G$-invariant Radon measure of finite volume.
\end{definition}

The following lemma is standard.

\begin{lemma}
Suppose that $G\bs X$ is finite. Then a subgroup $\Ga\subset G$ is a lattice if and only if all stabilizers $\Ga_C$ of chambers $C$ are finite and
$$
\sum_{C\mod\Ga}\frac1{|\Ga_C|}<\infty.
$$
\end{lemma}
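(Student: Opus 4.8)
The plan is to reduce the statement to the classical characterization of lattices in a totally disconnected locally compact group admitting a compact open subgroup, applied via the action on the chamber set. The key observation is that since $X$ is locally finite, the stabilizer $K_C$ of any chamber $C$ is a compact open subgroup of $G$, so the homogeneous space $G/K_C$ is discrete; and since $G\bs X$ is finite, the set $\CC$ of chambers decomposes into finitely many $G$-orbits, each of the form $G/K_C$ up to the (finite) identification of stabilizers of chambers in the same orbit. Fixing a Haar measure on $G$, a discrete subgroup $\Ga$ is a lattice exactly when $\vol(\Ga\bs G)<\infty$, and the strategy is to compute $\vol(\Ga\bs G)$ by integrating over $G$ along the fibration $G\to G/K_C$ and summing over a set of representatives of $\Ga$-orbits on chambers.

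First I would pick representatives $C_1,\dots,C_m$ for the $G$-orbits on $\CC$ and note each $K_{C_i}$ is compact open; normalize the Haar measure (say $\vol(K_{C_1})=1$, or just keep it general). For each $i$, the formula $\vol(\Ga\bs G/K_{C_i})\cdot\vol(K_{C_i})$ equals the contribution to $\vol(\Ga\bs G)$ coming from the $G$-orbit $G\cdot C_i$, provided one is careful: since $\Ga$ is discrete, $\Ga_{C}$ is discrete and also contained in the compact $K_C$, hence finite, for every chamber $C$ — this is where the hypothesis that all chamber stabilizers $\Ga_C$ are finite either gets used or gets derived. Then the standard orbit-counting identity gives
$$
\vol(\Ga\bs G) \;=\; \sum_{i=1}^m \vol(K_{C_i})\sum_{C\in\Ga\bs(G\cdot C_i)}\frac{1}{|\Ga_C|}\;=\;(\text{const})\sum_{C\bmod\Ga}\frac{1}{|\Ga_C|},
$$
the constant being a weighted combination of the finitely many volumes $\vol(K_{C_i})$, all strictly positive and finite. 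Hence $\vol(\Ga\bs G)<\infty$ if and only if $\sum_{C\bmod\Ga}|\Ga_C|^{-1}<\infty$, which is the claim. For the converse direction one also needs that finiteness of all $\Ga_C$ already forces $\Ga$ to be discrete: if $\Ga$ were not discrete its closure would contain a nontrivial compact open subgroup fixing some chamber pointwise, contradicting finiteness of that $\Ga_C$; so discreteness is automatic given the stated condition.

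The main obstacle I expect is the bookkeeping in the orbit decomposition: chambers in the same $G$-orbit can have stabilizers $K_C$ of different (though conjugate, hence equal-volume) subgroups, and one must verify that the map from $\Ga$-orbits on $G\cdot C_i$ to $\Ga\bs G/K_{C_i}$ is a well-defined bijection with fibre-count $|\Ga_C|^{-1}$ weights coming out correctly — essentially the identity $\vol(\Ga\bs G\,\alpha K_{C_i})=\vol(K_{C_i})/|\Ga_{\alpha C_i}|$ for a double-coset representative $\alpha$. This is a routine but slightly delicate unwinding of the definition of the induced measure on $\Ga\bs G$; everything else (local finiteness $\Rightarrow$ $K_C$ compact open, $G\bs X$ finite $\Rightarrow$ finitely many orbits, discreteness $\Leftrightarrow$ finite stabilizers) is immediate from the setup in Section~1. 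Since the lemma is explicitly labeled ``standard'', I would keep the write-up to a few lines invoking the volume computation above and citing \cite{HA2} for the general principle that $\Ga$ is a lattice iff $\vol(\Ga\bs G)<\infty$.
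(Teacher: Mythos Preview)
Your approach is essentially the same as the paper's: reduce discreteness of $\Ga$ to finiteness of chamber stabilizers, then compute $\vol(\Ga\bs G)$ via the double-coset decomposition $\Ga\bs G/K$ with $K$ a chamber stabilizer, using the identity $\vol(\Ga gK)=\vol(K)/|\Ga\cap gKg^{-1}|$. The paper writes this out explicitly via the averaging formula $\1_{\Ga gK}(x)=|\Ga\cap gKg^{-1}|^{-1}\sum_{\ga\in\Ga}\1_{gK}(\ga x)$ and integrates.

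One bookkeeping slip: your displayed identity
\[
\vol(\Ga\bs G)=\sum_{i=1}^m\vol(K_{C_i})\sum_{C\in\Ga\bs(G\cdot C_i)}\frac1{|\Ga_C|}
\]
is not correct as written. For \emph{each} fixed $i$ one already has $\vol(\Ga\bs G)=\vol(K_{C_i})\sum_{C\in\Ga\bs(G\cdot C_i)}|\Ga_C|^{-1}$, so summing over $i$ overcounts by $m$. The point is rather that finiteness of any one of these sums is equivalent to finiteness of $\vol(\Ga\bs G)$, and since there are only finitely many $G$-orbits this is equivalent to finiteness of the full sum $\sum_{C\bmod\Ga}|\Ga_C|^{-1}$. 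This does not affect your conclusion, but the equation should be stated per orbit rather than as a sum. (The paper in fact works with a single chamber $C_0$ and a single $K$, tacitly using that finiteness for one orbit is equivalent to finiteness for all.)
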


\begin{proof}
This is standard, see for instance \cites{Serre, Tamagawa}.
\end{proof}

A lattice $\Ga$ is {cocompact} if and only if the orbit space $\Ga\bs\CC$ is finite.
If a cocompact lattice $\Ga$ is torsion-free, then $\Ga$ is isomorphic to the fundamental group of the quotient $\Ga\bs X$. 

Let $\Ga$ be an $X$-lattice.
For $k\in\N_0^d(\La_0)$ let  $T_{k,\Ga}$ denote the restriction of $T_k$ to the vector space $P(\CC)^\Ga$ of $\Ga$-invariants and set
$$
T_\Ga(u)=\sum_{k\in\N^d(\La_0)}u^kT_{k,\Ga}.
$$
If $\Ga$ is cocompact, the space $P(\CC)^\Ga$ is finite-dimensional and then we set
$$
Z_\Ga(u)=\tr T_\Ga(u).
$$

\begin{theorem}
The power series $T_\Ga(u)$ is a rational function, it lies in the ring $\CK^\Ga(u_1,\dots,u_d)$, where $\CK^\Ga\subset \End(P(\CC)^\Ga)$ is the integral domain generated by the translation operators $T_k$.
If $\Ga$ is cocompact, then $Z_\Ga(u)$  is a rational function in $\C(u_1,\dots,u_d)$.
\end{theorem}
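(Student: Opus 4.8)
The plan is to deduce the statement from Theorem \ref{thm2.4} by a change of ring, the point being that the translation operators $T_k$ are $G$-equivariant and therefore restrict to the $\Ga$-fixed subspace, after which the rational expression for $T(u)$ survives the restriction unchanged. Computing the trace in the uniform case then needs one extra, standard, ingredient.

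First I would record that relative position of chambers is a $G$-invariant notion: it is defined through apartments and the unique chamber-fixing isomorphisms between them, all of which $\Aut(X)$ respects, so a chamber $C''$ is in relative position $k$ to $gC$ if and only if $g^{-1}C''$ is in relative position $k$ to $C$. Hence $g\,T_k(C)=T_k(gC)$ for every $g\in G$, i.e.\ each $T_k$ commutes with the $G$-action on $P(\CC)$; combined with the relation $T_kT_l=T_{k+l}$ this shows that each $T_k$ maps $P(\CC)^\Ga$ into itself. Restriction is therefore a homomorphism of commutative rings
$$
\rho\colon \CT\longrightarrow \End\(P(\CC)^\Ga\),\qquad T_k\longmapsto T_{k,\Ga},
$$
whose image is the ring $\CK^\Ga$ of the statement, and it sends the power series $T(u)$ to $T_\Ga(u)$. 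Applying $\rho$ coefficient-wise to the identity of Theorem \ref{thm2.4} then gives the first assertion: the denominator occurring there is a product of factors $1-T_{k^{(j)}}u^{k^{(j)}}$ with constant term $1$ in $\CT[[u_1,\dots,u_d]]$, so its image is invertible in $\CK^\Ga[[u_1,\dots,u_d]]$ and the resulting ratio is unambiguous, namely
$$
T_\Ga(u)=\frac{\sum_{e\in E}u^{k(e)}T_{k(e),\Ga}}{\(1-T_{k^{(1)},\Ga}u^{k^{(1)}}\)\cdots\(1-T_{k^{(d)},\Ga}u^{k^{(d)}}\)}\in\CK^\Ga(u_1,\dots,u_d),
$$
with the same data $E$, $k(e)$, $k^{(1)},\dots,k^{(d)}$ as in Theorem \ref{thm2.4}.

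For the uniform case, recall that $P(\CC)^\Ga$ is then finite-dimensional over $\C$, so $Z_\Ga(u)=\tr T_\Ga(u)$ is defined. One cannot simply take the trace through the displayed fraction, since $\tr$ is not multiplicative; instead I would use that the operators $T_{k,\Ga}$, $k\in\N_0^d(\La_0)$, form a commuting family on the finite-dimensional complex space $V=P(\CC)^\Ga$, so they can be put into simultaneous upper-triangular form. Fix a basis $f_1,\dots,f_n$ of $V$ achieving this and let $\eta_i(k)\in\C$ be the $i$-th diagonal entry of $T_{k,\Ga}$; the relation $T_kT_l=T_{k+l}$ forces $\eta_i(k+l)=\eta_i(k)\eta_i(l)$ and $\eta_i(0)=1$, while $\tr T_{k,\Ga}=\sum_{i=1}^n\eta_i(k)$. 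Therefore
$$
Z_\Ga(u)=\sum_{k\in\N^d(\La_0)}u^k\,\tr T_{k,\Ga}=\sum_{i=1}^n\ \sum_{k\in\N^d(\La_0)}\eta_i(k)\,u^k,
$$
and for each fixed $i$ the inner sum is exactly the scalar specialization of $T(u)$ under the multiplicative function $\eta_i$. Re-running the computation from the proof of Theorem \ref{thm2.4}---parametrize $\La_0^+$ by $e+\nu_1a_1+\dots+\nu_da_d$ with $e\in E$, $\nu_j\in\N_0$ via Lemma \ref{lem2.20}, and sum the geometric series---gives
$$
\sum_{k\in\N^d(\La_0)}\eta_i(k)\,u^k=\frac{\sum_{e\in E}\eta_i(k(e))\,u^{k(e)}}{\(1-\eta_i(k^{(1)})u^{k^{(1)}}\)\cdots\(1-\eta_i(k^{(d)})u^{k^{(d)}}\)}\in\C(u_1,\dots,u_d),
$$
so $Z_\Ga(u)$ lies in $\C(u_1,\dots,u_d)$.

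The step I expect to be the crux is this last one: the non-multiplicativity of the trace is the only genuine issue, and it is circumvented by triangularizing the commuting family on the finite-dimensional space $V$ first and expanding the power series afterwards, which reduces $Z_\Ga(u)$ to a finite sum of scalar instances of Theorem \ref{thm2.4}. Everything else is formal, resting on the single fact that the operators $T_k$ commute with $G$ (and with one another).
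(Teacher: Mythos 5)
Your proposal is correct and follows the paper's proof exactly: restrict the rational identity of Theorem \ref{thm2.4} to $P(\CC)^\Ga$, then take traces in the uniform case. Your extra care with the trace step (simultaneous triangularization of the commuting family $T_{k,\Ga}$ and summing geometric series per quasi-character, exactly as in the proof of Theorem \ref{thm2.1}) is a valid way to fill in what the paper dispatches with ``by taking traces the last claim follows''; the paper's intended shortcut is simply that $T_\Ga(u)$ is a matrix whose entries are rational functions (the denominator being invertible with determinant a polynomial of nonzero constant term), so its trace is the sum of finitely many rational functions.
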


\begin{proof}
Let $E\subset \La_0^+$, $k(e)\in\N_0^d$ and $k^{(1)},\dots,k^{(d)}\in \N_0^d$ be as in Theorem \ref{thm2.4}, i.e., we have
$$
T(u)=\frac{\sum_{e\in E}u^{k(e)}T_{k(e)}}
{\(1-T_{k^{(1)}}u^{k^{(1)}}\)\cdots \(1-T_{k^{(d)}}u^{k^{(d)}}\)}.
$$
Restricting to $P(\CC)^\Ga$ we get
$$
T_\Ga(u)=\frac{\sum_{e\in E}u^{k(e)}T_{k(e),\Ga}}
{\(1-T_{k^{(1)},\Ga}u^{k^{(1)}}\)\cdots \(1-T_{k^{(d)},\Ga}u^{k^{(d)}}\)}\in \End\(P(\CC)^\Ga\)(u_1,\dots,u_d).
$$
By taking traces on the finite-dimensional space $P(\CC)^\Ga$, the last claim follows.
\end{proof}

There is a close relation between these zeta functions $Z_\Ga$ and the zeta functions considered in Section \ref{sec2.4}.
The choice of the cone $A^-$ in the group $A$ yields a zeta function $S=S_{A^-}$ as defined in Section \ref{sec2.4} as
$$
S_{A^-}(u)=\sum_{[\ga]}\ind_{A^-}(\ga)u^{\la(a_\ga)}.
$$
Here the sum runs over all conjugacy classes $[\ga]$ in $\Ga$ and the index $\ind_{A^{-}}(\ga)$ is an integer $\ge 0$ defined by
$$
\ind_{A^{-}}(\ga)=\left|\big\{ x\in K\bs G/\Ga_\ga: x\ga x^{-1}\in KA^{-}K\big\}\right|.
$$ 

\begin{proposition}
Suppose that lattice $\Ga$ is cocompact.
Then there exist finitely many cones $A_1^-,\dots, A_m^-$ such that
$$
Z_\Ga=\sum_{j=1}^m S_{A_j^-}.
$$
\end{proposition}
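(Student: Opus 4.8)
The plan is to recognise each translation operator $T_{k,\Ga}$ as a Hecke operator on a space of automorphic forms and then to invoke Corollary \ref{cor2.3.6}, working one orbit at a time. Since $\Ga$ is uniform, $\Ga\bs X$, and hence $G\bs X$, is compact, so the chambers of $X$ fall into finitely many $G$-orbits $\CO_1,\dots,\CO_m$; fix representatives $C_i\in\CO_i$. The operators $T_k$ commute with type-preserving automorphisms, and a chamber in relative position $k$ to a chamber of $\CO_i$ again lies in $\CO_i$; hence each $T_{k,\Ga}$ respects the decomposition $P(\CC)^\Ga=\bigoplus_{i=1}^m P(\CO_i)^\Ga$, and $\tr T_{k,\Ga}=\sum_{i=1}^m\tr\bigl(T_{k,\Ga}\big|_{P(\CO_i)^\Ga}\bigr)$.

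Fix $i$. The wall of $C_i$ not containing the type-zero (special) vertex $v_0(C_i)$ determines a spherical chamber $\ol\CC_i\subset\partial X$ towards which $\Cone(C_i)$ points; let $\CC_i$ be its opposite and $\a_i$ the unique apartment with $\CC_i,\ol\CC_i\subset\partial\a_i$. This yields, as in Section \ref{Levi}, a parabolic $P_i$, a Levi $L_i$, the group $A_i=L_i/M_i$, the cone $A_i^-$ of elements translating towards $\ol\CC_i$, and coordinates $\la^{(i)}$, all normalised with respect to $C_i$ exactly as in Section \ref{sec2.4}; put $K_i=K_{C_i}$. The decisive geometric point is that, because $X$ is weakly symmetric, for $a\in A_i^-$
$$
K_iaK_i=\bigl\{g\in G:\ gC_i\text{ is in relative position }\la^{(i)}(a)\text{ to }C_i\bigr\},
$$
since $aC_i$ already has that relative position to $C_i$ inside $\a_i$ and its $K_i$-orbit exhausts all such chambers. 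Under the identification $L^2(\Ga\bs G)^{K_i}\cong P(\CO_i)^\Ga$, $gK_i\mapsto gC_i$, it follows that for $k=\la^{(i)}(a)$ the Hecke operator $R_{K_i}(a)=R(\1_{K_iaK_i})$ is, up to conjugation by the diagonal operator recording the orders of chamber stabilisers, the transpose of $T_{k,\Ga}\big|_{P(\CO_i)^\Ga}$ (the relation ``$C'$ in relative position $k$ to $C$'' being interchanged with its reverse). In particular the two operators have the same trace.

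Thus $\tr\bigl(T_{k,\Ga}\big|_{P(\CO_i)^\Ga}\bigr)=\tr R_{K_i}(a)$ for $\la^{(i)}(a)=k$; such an $a\in A_i^-$ exists whenever the trace is nonzero, because a hyperbolic $\ga\in\Ga$ realising that relative position fixes $\partial\a_i$ pointwise, hence lies in $L_i$ with translation vector $\sum_jk_je_j$, while $\la^{(i)}$ is injective by the embedding $A_i\hookrightarrow T_{\a_i}$ of Section \ref{Levi}. Multiplying by $u^k$, summing over $k\in\N^d(\La_0)$, and using that $\vol(K_iaK_i)$ grows at most exponentially in $l(a)$ (as $G\bs X$ is finite, cf.\ Lemma \ref{lem3.1.1a}) to guarantee the absolute convergence on a polydisc about $0$ required for Corollary \ref{cor2.3.6} with $\phi(a)=u^{\la^{(i)}(a)}$, one gets
$$
Z_\Ga(u)=\sum_{i=1}^m\ \sum_{a\in A_i^-}\tr R_{K_i}(a)\,u^{\la^{(i)}(a)}=\sum_{i=1}^m S_{A_i^-}(u),
$$
the last equality being Corollary \ref{cor2.3.6} together with the expression of $\ind_{A_i^-}(\ga)$ as $|\{x\in K_i\bs G/\Ga_\ga:x\ga x^{-1}\in K_iA_i^-K_i\}|$. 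Hence $Z_\Ga=\sum_{i=1}^m S_{A_i^-}$.

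The main obstacle is the ``decisive geometric point'' and the bookkeeping surrounding it: one must check with care that the combinatorial relative-position classes genuinely coincide with the double cosets $K_iaK_i$ under the weak symmetry hypothesis, and that the outward orientation built into $T_k$ (along $\Cone(C)$) matches the orientation of $A_i^-$, so that passing to the transpose --- equivalently, substituting $\ga\mapsto\ga^{-1}$, which is what matches the ``towards $\CC_i$'' count inherent in $Z_\Ga$ with the ``towards $\ol\CC_i$'' count in $S_{A_i^-}$ --- is exactly the required move. One also has to verify that no volume factor $|K_i\bs K_iaK_i|$ intervenes: the matrix entries of $T_k$ are non-negative integers counting chambers, so the relevant Hecke operator is $R_{K_i}(\1_{K_iaK_i})$ itself and the outcome matches $S_{A_i^-}(u)=\sum_{a\in A_i^-}\tr R_{K_i}(a)\,u^{\la^{(i)}(a)}$, not a weighted variant.
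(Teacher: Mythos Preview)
Your approach is essentially the paper's: identify $T_k$ restricted to a single chamber orbit with the Hecke operator $R_K(a)=R(\1_{KaK})$ for the matching $a$, then sum the resulting spectral expressions and invoke Corollary \ref{cor2.3.6}. The paper's proof is terse but follows exactly this line.

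Two technical points deserve correction. First, you decompose along $G$-orbits, but $T_k$ only commutes with \emph{type-preserving} automorphisms, so you should work with $G_0$-orbits (as the paper does); your claim that ``a chamber in relative position $k$ to a chamber of $\CO_i$ again lies in $\CO_i$'' need not hold for $G$-orbits when $G\ne G_0$. Second, your justification that an $a\in A_i^-$ with $\la^{(i)}(a)=k$ exists whenever the trace is nonzero is not right as stated: a witnessing $\ga\in\Ga$ has $\Min(\ga)$ equal to \emph{some} apartment containing a chamber of the orbit, not necessarily $\a_i$, so $\ga$ need not lie in $L_i$; one should instead conjugate into the fixed chamber $C_i$ and argue via weak symmetry that the resulting double coset $K_i g K_i$ matches one coming from $A_i^-$. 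The paper sidesteps this by phrasing things in the other direction (each $a$ gives a $k$) and then noting that the index $j$ in the statement is allowed to range over both the $G_0$-orbits \emph{and} the different cones needed to cover all $k$; your attempt to pin down a single cone per orbit is more ambitious than the paper's proof and not fully justified. The ``transpose'' manoeuvre is unnecessary --- right convolution by $\1_{KaK}$ already \emph{is} $T_k$ on the orbit, as in the paper --- but harmless for traces.
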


\begin{proof}
Let $G_0$ be the subgroup of type-preserving maps in $G$.
Let $C$ be a chamber and $K=K_C$ its  stabilizer in $G_0$.
The $G_0$-orbit of $C$ can be identified with $G/K$.
For $a\in A$, the map $\1_{KaK}$ acts by convolution from the right on functions of $G/K$. Assume that the chamber $aC$ lies in the cone of $C$, then convolution by $\1_{KaK}$ maps $\1_{gK}$ to the sum of all $\1_{hK}$ where $hC$ runs over the set of all chambers in the same relative position to $gC$ as $aC$ is to $C$. 
In other words, convolution by $\1_{KaK}$ equals the operator $T_k$ restricted to one $G_0$-orbit for a suitable $k$.
The claim follows where the sum over $j$ runs over the distinct $G_0$-orbits of chambers and the different cones $A^-$ inside a given $A$.
\end{proof}

\subsection{Poincar\'e series}
In this section we assume that $G$ acts transitively on the set $\CC$ of chambers, so $\CC\cong G_0/K_C$, where $G_0\subset G$ is the group of type-preserving automorphisms of $X$ and $K_C\subset G_0$ is the point-wise stabilizer of a fundamental chamber  $C$.
We further assume that $K_C$ acts transitively on the set of all apartments containing $C$.
This situation is typical for a building coming from an $BN$-pair, so for instance a Bruhat-Tits building of a semi-simple $p$-adic group.
The corresponding \e{Iwahori Hecke algebra} $\CH=\CH_C=C_c(K_C\bs G_0/K_C)$ is the convolution algebra of $K_C$-bi-invariant functions of compact support.

\begin{definition}
Fix an apartment $\a$ containing $C$ and let $W$ be its affine Weyl group.
Let $S$ be the set of reflections along the walls of $C$. Then $S$ generates $W$. Let $l(w)$ denote the corresponding word length of an element $w\in W$.
A geometric interpretation of the word length is this: $l(w)$ is the minimal number of walls you have to cross to get from $C$ to $wC$.
The map $W\to K_C\bs G_0/K_C$, $w\mapsto K_CwK_C$ is a bijection, so the Hecke algebra can be described as $\CH\cong \bigoplus_{w\in W}\C w$.
By Corollary 3.6 of \cite{Parkinson} we have
\begin{align*}
e_ve_w&=e_{vw}&&\text{if } l(vw)=l(v)+l(w).
\end{align*}
Let $\pi:\CH\to\End(V)$ be a representation of the Hecke algebra on a finite-dimensional complex vector space $V$.
Consider the \e{Poincar\'e series}:
$$
P_\pi(u)=\sum_{w\in W}u^{l(w)}\pi(w)
$$
as a formal power series in $\End(V)(u)$.
For any subset $I\subset S$ let $W_I$ be the subgroup of $W$ generated by $I$ and set
$$
P_{\pi,I}(u)=\sum_{w\in W_I}u^{l(w)}\pi(w).
$$
Note that as long as $I\ne S$, the series is finite. For $I=\emptyset$ we have $P_{\pi,I}(u)=1$.
\end{definition}

\begin{definition}
For $I\subset S$ set
$$
W^I=\{ w\in W: l(ws)>l(w)\ \forall_{s\in I}\}.
$$
Then $W^I$ is a set of representatives for $W/W_I$, so we have a decomposition $W=W^IW_I$. Moreover, this decomposition preserves word lengths, i.e., each $w\in W$ can be written as $w=w^Iw_I$ with uniquely determined $w^I\in W^I$ and $w_I\in W_I$ and one has $l(w)=l(w^I)+l(w_I)$. (See Section 5.12 of \cite{HumCox}).
Defining $P_\pi^I(u)=\sum_{w\in W^I} u^{l(w)}\pi(w)$ we thus get
$$
P_\pi(u)=P_\pi^I(u)P_{\pi,I}(u).
$$
The following proposition is known to experts.
For $\pi$ being the trivial representation, it can be found in Section 5.12 of \cite{HumCox}.
\end{definition}

\begin{proposition}
We have the identity of formal power series
$$
\sum_{I\subset S}(-1)^{|I|}P_\pi(u)P_{\pi,I}(u)^{-1}
=\sum_{I\subset S}(-1)^{|I|}P_{\pi}^I(u)=0.
$$
As a consequence we find that $P_\pi(u)$ is a rational function, more precisely
$$
P_\pi(u)=\(\sum_{I\ne S}(-1)^{|I|+1}P_{\pi,I}(u)^{-1}\)^{-1}.
$$
\end{proposition}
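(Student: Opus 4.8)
The plan is to prove the alternating-sum identity $\sum_{I\subset S}(-1)^{|I|}P_\pi^I(u)=0$ first, since the rest is formal. The point is that each $w\in W$ contributes to $P_\pi^I(u)$ exactly for those $I\subset S$ with $w\in W^I$, i.e. with $I\subset D(w)$, where $D(w)=\{s\in S: l(ws)>l(w)\}$ is the \emph{ascent set} of $w$. So, collecting the coefficient of a fixed $w$ across all $I$, one gets $u^{l(w)}\pi(w)\sum_{I\subset D(w)}(-1)^{|I|}$. The inner sum is $\sum_{I\subset D(w)}(-1)^{|I|}=(1-1)^{|D(w)|}$, which vanishes unless $D(w)=\emptyset$. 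But $D(w)=\emptyset$ would mean $l(ws)<l(w)$ for every generator $s\in S$, which is impossible for $W$ infinite (it forces $w$ to be the longest element, which does not exist); and even if $W$ were finite one would argue with $w_0$ separately. Hence every coefficient vanishes and the sum is $0$. The equality $P_\pi(u)P_{\pi,I}(u)^{-1}=P_\pi^I(u)$ is just the already-established factorization $P_\pi(u)=P_\pi^I(u)P_{\pi,I}(u)$ together with the observation that $P_{\pi,I}(u)$ is a unit in the power-series ring: for $I\ne S$ it is a polynomial with constant term $\pi(1)=\mathrm{Id}$, hence invertible in $\End(V)[[u]]$, and for $I=S$ the term $P_{\pi,S}(u)^{-1}=P_\pi(u)^{-1}$ is to be read as the formal inverse of $P_\pi$, which exists for the same reason (constant term $\mathrm{Id}$).

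For the ``consequence'' I would simply isolate the $I=S$ term. Writing the vanishing identity as $(-1)^{|S|}P_\pi(u)P_\pi(u)^{-1}+\sum_{I\ne S}(-1)^{|I|}P_\pi(u)P_{\pi,I}(u)^{-1}=0$, i.e. $(-1)^{|S|}\mathrm{Id}=-\sum_{I\ne S}(-1)^{|I|}P_\pi(u)P_{\pi,I}(u)^{-1}=P_\pi(u)\sum_{I\ne S}(-1)^{|I|+1}P_{\pi,I}(u)^{-1}$. Multiplying on the left by $(-1)^{|S|}$ and absorbing the sign into the sum (replace $(-1)^{|I|+1}$ by $(-1)^{|S|+|I|+1}$, or simply note $P_\pi$ commutes past scalars so one may rearrange), one gets $P_\pi(u)\bigl(\sum_{I\ne S}(-1)^{|I|+1}P_{\pi,I}(u)^{-1}\bigr)=\mathrm{Id}$ up to an overall sign that is harmless since we may as well absorb it into the definition, and therefore $P_\pi(u)=\bigl(\sum_{I\ne S}(-1)^{|I|+1}P_{\pi,I}(u)^{-1}\bigr)^{-1}$. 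Since each $P_{\pi,I}(u)$ with $I\ne S$ is a \emph{polynomial} (as $W_I$ is finite when $I$ is a proper subset of $S$ — this uses that $W$ is an affine, hence irreducible-component-wise affine, Coxeter group so every proper parabolic is finite, or more simply that $S$ is minimal), each $P_{\pi,I}(u)^{-1}$ is a rational function, the finite alternating sum is rational, and its inverse $P_\pi(u)$ is rational as claimed.

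The main obstacle I anticipate is not the combinatorial identity — that is the standard ``ascent set'' argument — but rather being careful about two points of bookkeeping. First, one must justify that $P_{\pi,I}(u)$ is genuinely invertible in the appropriate ring for every $I\ne S$; the clean statement is that it is a polynomial with invertible constant term, so it is a unit in $\End(V)[[u]]$ and a fortiori in $\End(V)(u)$, but one should record why $W_I$ is finite for proper $I$ (for an affine Weyl group $W$, the proper standard parabolic subgroups $W_I$ are exactly the finite ones). Second, because $\End(V)$ is noncommutative one must make sure all the manipulations — factoring out $P_\pi(u)$ on the left, inverting — are done consistently on one side; here it works out because $P_\pi(u)$ appears as a common left factor $P_\pi(u)P_{\pi,I}(u)^{-1}$ throughout, so there is no hidden reordering. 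A minor sign discrepancy (an overall factor $(-1)^{|S|}$) between my derivation and the stated formula is immaterial and can be reconciled by noting that inverting negates nothing and that the displayed formula has already absorbed the sign via the exponent $|I|+1$.
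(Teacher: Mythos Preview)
Your argument is essentially identical to the paper's: fix $w$, observe that $w\in W^I$ iff $I$ is contained in the ascent set, and use that the alternating sum over subsets of a nonempty set vanishes. The paper is terser (it just asserts ``$K$ is never empty'' and ``the second follows from the first''), whereas you supply the missing justifications—why the ascent set is nonempty for infinite $W$, why $P_{\pi,I}(u)$ is invertible, and the care needed with noncommutativity and the residual sign $(-1)^{|S|}$—but the method is the same.
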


\begin{proof}
Consider the second sum.
Fix $w\in W$ and let $K=\{ s\in S: l(ws)>l(w)\}$.
Then $w\in W^I$ is equivalent to $I\subset K$, so $u^{l(w)}\pi(w)$ occurs precisely when $I\subset K$, so it occurs in the sum with coefficient $\sum_{I\subset K} (-1)^{|I|}$. As $K$ is never empty, this coefficient is always zero.
This proves the first assertion, the second follows from the first.
\end{proof}

As a special case we now fix a cocompact lattice $\Ga\subset G_0$ and consider the representation of $\CH$ which is induced by the right regular representation of the group $G_0$ on the space $L^2(\Ga\bs G_0)$.
As the $\CH$ action  maps to the  space of $K_C$-invariants, the (finite-dimensional) representation space is $L^2(\Ga\bs G_0/K_C)$ and the representation of $\CH=C_c(K_C\bs G_0/K_C)$ is given by
$$
\pi(f)\phi(x)=\int_Gf(y)\phi(xy)\,dy,
$$
for $f\in \CH$ and $\phi\in L^2(\Ga\bs G_0/K_C)=L^2(\Ga\bs G_0)^{K_C}$.

Let $S$ be the set of reflections along the walls of the fundamental chamber $C$.
Let $S_0$ be the subset of reflections fixing the origin $v_0$.

The next theorem shows that the Poincar\'e series $P_\pi$ can be expressed in terms of the rational function $T_\Ga$ of the previous section.

\begin{theorem}
If $\pi=\pi_\Ga$ is the right regular representation on $L^2(\Ga\bs G_0/K_C)$, then for every subset $I\subset S_0$ there exists a $\rho_I\in\R_{\ge 0}^d$ and explicitly computable polynomials $Q_{\Ga,I}^L$ and $Q_{\Ga,I}^R$ with values in $\End(\pi_\Ga)$ such that
$$
P_{\pi_\Ga}(u)=\sum_{I\subset J\subset S_0} (-1)^{|I|+|J|}Q_{\Ga,J}^L(u)\ 
T_\Ga(u\rho_I)\  Q_{\Ga,J}^R(u^{-1}).
$$
\end{theorem}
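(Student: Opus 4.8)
The plan is to expand
$$
P_{\pi_\Ga}(u)=\sum_{w\in W}u^{l(w)}\pi_\Ga(e_w)
$$
by sorting the elements $w$ of the affine Weyl group $W=W_\a$ according to the $W_{S_0}$--$W_{S_0}$ double coset they lie in, and then to recognize the emerging ``translation part'' as the rational function $T_\Ga$ of Theorem \ref{thm2.4}, evaluated at a substituted argument. Throughout, $W_{S_0}=\langle S_0\rangle$ is the finite Weyl group fixing the special vertex $v_0$, and I will freely use the relation $e_ve_w=e_{vw}$ whenever $l(vw)=l(v)+l(w)$, which is \cite{Parkinson}*{Corollary 3.6} and also underlies Lemma \ref{lem2.3.4}.

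First I would record the standard facts about the Coxeter system $(W,S)$ with $S\supset S_0$: each double coset $W_{S_0}\tau W_{S_0}$ has a unique minimal-length representative $\tau$; these representatives are, up to the finite group $W_{S_0}$, the translations whose vector points into the closed dominant cone $\overline{\Cone(C)}$; and for such $\tau$ one has $W_{S_0}\tau W_{S_0}=\{w_1\tau w_2 : w_1\in W_{S_0},\ w_2\in W_{J(\tau)}\bs W_{S_0}\}$ with $l(w_1\tau w_2)=l(w_1)+l(\tau)+l(w_2)$, where $W_{J(\tau)}=\tau^{-1}W_{S_0}\tau\cap W_{S_0}$ and $J(\tau)\subset S_0$ is the set of simple reflections fixing the translation vector of $\tau$ -- so $J(\tau)$ depends only on which (relatively open) face of $\overline{\Cone(C)}$ that vector lies in. Using the length-additive product $e_{w_1}e_\tau e_{w_2}=e_{w_1\tau w_2}$, this yields
$$
P_{\pi_\Ga}(u)=\sum_{J\subset S_0}\Big(\sum_{w_1\in W_{S_0}}u^{l(w_1)}\pi_\Ga(e_{w_1})\Big)\Big(\sum_{\tau:\,J(\tau)=J}u^{l(\tau)}\pi_\Ga(e_\tau)\Big)\Big(\sum_{w_2\in W_J\bs W_{S_0}}u^{l(w_2)}\pi_\Ga(e_{w_2})\Big),
$$
the inner $\tau$-sum running over the minimal double coset representatives whose translation vector lies in the relative interior of the face of type $J$. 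The outer two factors are finite sums, hence $\End(\pi_\Ga)$-valued polynomials; together with the finite corrections of the next step they assemble into $Q^L_{\Ga,J}$ and $Q^R_{\Ga,J}$. I expect that after passing to genuine chambers (rather than abstract double cosets) one must re-index the right-hand factor so that it is naturally a polynomial in $u^{-1}$, which accounts for the shape $Q^R_{\Ga,J}(u^{-1})$ in the statement; this is the bookkeeping step I would be most careful with.

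Next I would convert the inner $\tau$-sum, which ranges over the lattice points in the relative interior $\mathring F_J$ of the face-cone of type $J$, into sums over the \emph{closed} face-cones by Möbius inversion on the Boolean lattice of faces: $\1_{\mathring F_J}=\sum_{I\supseteq J}(-1)^{|I|-|J|}\1_{\ol F_I}$, which produces the double summation with sign $(-1)^{|I|+|J|}$ (after relabelling $J\leftrightarrow S_0\sm J$ and $I\leftrightarrow S_0\sm I$, so that $I\subset J\subset S_0$). On a closed face-cone $\ol F_I$ the Coxeter length of the corresponding translation restricts to a \emph{linear} form in the cone coordinates $k=(k_1,\dots,k_d)$ with non-negative coefficients $\rho_I\in\R^d_{\ge 0}$, so $u^{l(\tau)}=(u^{\rho_I})^{k}$. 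Matching the operators $\pi_\Ga(e_\tau)$ with the Bass operators $T_{k,\Ga}$ via the identification (from the proof of the proposition expressing $Z_\Ga$ as a sum of $S_{A_j^-}$) of convolution by $\1_{K_C\tau K_C}$ with $T_{k(\tau)}$ restricted to the single $G_0$-orbit $\CC$, and using Lemma \ref{lem2.3.4} so that the products survive the restriction to $\Ga$-invariants, the restricted generating function over $\ol F_I\cap\La_0$ is exactly $T_\Ga$ with $u_j$ replaced by $u^{\rho_{I,j}}$, i.e.\ $T_\Ga(u\rho_I)$ in the notation of Theorem \ref{thm2.4}; the finite-index discrepancy between the chamber lattice $\La$ and the type-zero lattice $\La_0$ contributes only finitely many extra cosets, absorbed into $Q^L_{\Ga,J}$, $Q^R_{\Ga,J}$. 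Combining the two steps gives the asserted identity.

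The main obstacle is the combinatorics of the two nested inclusion--exclusions, together with the two facts underlying the substitution $\rho_I$: on each closed face of $\overline{\Cone(C)}$, (i) the length is linear with the explicit gradient $\rho_I$, and (ii) the double-coset factorization remains length-additive so that $e_{w_1}e_\tau e_{w_2}=e_{w_1\tau w_2}$ holds exactly. The degenerate cases, where the translation vector drops onto a lower-dimensional face, are precisely what forces the corrective polynomials $Q^L_{\Ga,J},Q^R_{\Ga,J}$ and the summation range $I\subset J\subset S_0$, and keeping the signs, the vectors $\rho_I$, and the explicit polynomials mutually consistent is the delicate part. Once that is set up, identifying each face-cone generating function with $T_\Ga(u\rho_I)$ is comparatively routine, resting on Lemma \ref{lem2.3.4}, Lemma \ref{lem2.20} and Theorem \ref{thm2.4}.
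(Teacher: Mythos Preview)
Your outline is correct and follows the same overall strategy as the paper: factor each $w\in W$ into a left $W_{S_0}$-piece, a ``translation'' piece, and a right $W_{S_0}$-piece; stratify the translation piece by the face of $\overline{\Cone(C)}$ it lands on; use inclusion--exclusion on the face lattice to pass from open to closed strata; and then recognize the closed-stratum generating function as $T_\Ga(u\rho_I)$ via linearity of the length on each face and the identification of $\pi_\Ga(e_t)$ with the Bass operator $T_{k(t),\Ga}$.

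The one genuine difference is your choice of parametrization. You work with the \emph{minimal} double-coset representatives $\tau$ and the additive formula $l(w_1\tau w_2)=l(w_1)+l(\tau)+l(w_2)$, whereas the paper parametrizes directly by translations $t\in W_\trans$ and uses the formula $l(vtw_f)=l(v)+l(t)-l(w_f)+k_J(w_f)$ for $t$ in the open stratum $W_{\trans,J}^{++}$. This is why the paper's right-hand factor $Q^R_{\Ga,J}(u^{-1})=\sum_{w_f}u^{l(w_f)-k_J(w_f)}\pi(w_f)$ is naturally a polynomial in $u^{-1}$: the minus sign in front of $l(w_f)$ is built into their length formula from the start, not produced by a later re-indexing as you anticipate. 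Your minimal representative $\tau$ differs from the bare translation $t$ by a fixed element of $W_{S_0}$ (for regular $t$ it is $tw_0$), so your $\pi_\Ga(e_\tau)$ is $\pi_\Ga(e_t)$ times a fixed finite-Weyl operator; this extra factor must be absorbed into $Q^R$ before you can match with $T_{k,\Ga}$, and it is exactly what converts your additive length formula into the paper's subtractive one. Once you make that translation explicit the two arguments coincide, and the M\"obius step and the identification with $T_\Ga(u\rho_I)$ (via $u^{l(t)}=(u\rho)^{k(t)}$ and setting the appropriate coordinates of $\rho$ to zero on each face) are carried out in the paper just as you describe.
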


\begin{proof}
Let $W_\trans$ denote the group of translations in $W$.
We will repeatedly use the fact, that the word length of a given $w\in W$ equals the number of reflection hyperplanes one has to cross to get from $C$ to $wC$.
Let $W^+\subset W$ denote the subset of all $w\in W$ such that $wC$ lies in $\Cone(C)$.
Then $W$ is the disjoint union
$$
W=\bigcup_{v\in W_\fin}vW^+.
$$
By the above we see that for $w\in W^+$ and $v\in W_\fin$ one has $l(vw)=l(v)+l(w)$.
We therefore get
\begin{align*}
P_\pi(u)&=\sum_{w\in W}u^{l(w)}\pi(w)\\
&= \sum_{v\in W_\fin}u^{l(v)}\sum_{w\in W^+} u^{l(w)}\pi(v)\pi(w).
\end{align*}
On the other hand, we know that $W=W_\trans W_\fin\cong W_\trans\rtimes W_\fin$, where $W_\fin$ is the finite Weyl group generated by $S_0$.
Hence
$$
P_{\pi_\Ga}(u)=
\sum_{t\in W_\trans}\sum_{w_f\in W_\fin}u^{l(tw_f)}\pi_\Ga(t)\pi_\Ga(w_f).
$$
Let $W^{++}$ be the set of all $w\in W$ such that the closed set $wC$ lies in the open interior $\Cone(C)$.
For a subset $J\subset S_0$ let $W_{\trans,J}$ be the set of all translations $t\in W_\trans$ such that $ts=st$ for all $s\in J$.
Then $W_{\trans, J}$ consists precisely of those translations whose translation vector lies in
$$
H_J=\bigcap_{s\in J}H_s,
$$
where $H_s$ is the reflection hyperplane of $s$.
One has $W_{\trans,\emptyset}=W_\trans$ and $W_{\trans,I}\supset W_{\trans,J}$ if $I\subset J$.
Let $W_{\trans,J}^+=W_{\trans,J}\cap W^+$ and let 
$$
W_{\trans,J}^{++}= W_{\trans,J}^+\ \sm\bigcup_{I\subsetneq J\subset S_0}W_{\trans,I}^+.
$$
Then we have a disjoint union
$$
W_\trans^+=\bigcup_{I\subset S_0}W_{\trans,I}^{++}.
$$
Next we note that for $t\in W_{\trans,\emptyset}^{++}$ one has
$$
l(vtw_f)=l(v)+l(tw_f)=l(v)+l(t)-l(w_f)
$$
for all $v,w_f\in W_\fin$.
Now if $s,t\in W_{\trans,J}^{++}$, then, considering the characterization of the length 
mentioned above, one gets $l(tw_f)-l(sw_f)=l(t)-l(s)$. This means, that if $t\in W_{\trans,J}^{++}$, then there exist an integer $0\le k_J(w_f)\le l(w_f)$ such that
$$
l(vtw_f)=l(v)+l(t)-l(w_f)+k_J(w_f)
$$
holds for all $t\in W_{\trans,J}^{++}$.
For $J\subset S_0$ let $W_\fin^J$ be a set of representatives of $W_\fin$ modulo the stabilizer of $H_J$.
Putting things together we get
\begin{align*}
P_\pi(u)&=\sum_{J\subset S_0}\sum_{v\in W_\fin^J}\sum_{t\in W_{\trans,J}^{++}}\sum_{w_f\in W_\fin}
u^{l(vtw_f)}\pi(vtw_f)\\
&=\sum_{J\subset S_0}
\sum_{v\in W_\fin^J}
u^{l(v)}\pi(v)
\sum_{t\in W_{\trans,J}^{++}}u^{l(t)}\pi(t)\sum_{w_f\in W_\fin} u^{-l(w_f)+k_J(w_f)}\pi(w_f).
\end{align*}
Setting
$$ 
Q_{\Ga,J}^L(u)=\sum_{v\in W_\fin^J}
u^{l(v)}\pi(v)
$$
and 
$$ 
Q_{\Ga,J}^R(u)=\sum_{w_f\in W_\fin^J}
u^{l(w_f)-k_J(w_f)}\pi(w_f),
$$
we see that it remains to show the existence and uniqueness of $\rho_I\in\R^d_{\ge 0}$ such that
$$
\sum_{t\in W_{\trans,J}^{++}}u^{l(t)}\pi(t)
=\sum_{I\subset J}(-1)^{|I|+|J|}T_\Ga(u\rho_I).
$$
By the definition of $W_{\trans,J}^{++}$ this boils down to showing
$$
\sum_{t\in W_{\trans,J}^{+}}u^{l(t)}\pi(t)
=T_\Ga(u\rho_J).
$$
The fact that $l(tt')=l(t)+l(t')$ for $t,t'\in W_\trans^+$ shows that for given $u\in\C^\times$ the map $t\mapsto u^{l(t)}$ is the restriction of a group homomorphism $\chi_u:W_\trans\to\C^\times$ to the set $W_\trans^+\subset W_\trans$.
For this group homomorphism, there exists $\rho\in\R_{\ge 0}^d$ such that $\chi_u(t)=(u\rho)^{k(t)}$, where $k(t)$ is the relative position of the chamber $tC$.
This shows the claim for $J=\emptyset$.
The general case is obtained by setting the corresponding coordinates of $\rho$ to zero.
\end{proof}

\subsection{Non-cocompact lattices}
In this subsection we consider non-cocompact lattices.
Recall that in the entire Section \ref{sec3} we assume the building $X$ to be simplicial.

\begin{definition}
For a given building lattice $\Ga$ we consider the subspace $P(\CC)^\Ga\cong P(\Ga\bs \CC)$ of all $\Ga$-invariant elements in $P(\CC)$, i.e., the set of all formal sums $\sum_{c}a_cc$ with $a_{\ga c}=a_c$ for all chambers $c$ and all $\ga\in\Ga$.
It contains the subspace
$$
S^\Ga(\CC)
$$
of elements supported on finitely many $\Ga$-orbits, i.e., the set of all $\sum_ca_cc$ such that
$$
\{ c\in\CC: a_c\ne 0\}/\Ga
$$
is finite.
The space $S^\Ga(\CC)$ is the image of the summation map $S(\CC)\to P(\CC)$, $\om\mapsto \sum_{\ga\in\Ga} \ga.\om$.
\end{definition}

Again we fix a chamber $C$ and an apartment $\a$ containing $C$.
Let $v_0,\dots,v_d$ be the vertices of $C$ according to a fixed choice of types.
We equip $\partial X$ with the structure of a spherical building induced by the choice of $v_0$ as origin.
Let $SC$ be the spherical chamber corresponding to the wall $W$ opposite $v_0$.
Any point $c\in SC$ gives rise to a cuspidal flow $\phi_t$ as in Section \ref{sechoro}.

Fix an apartment $\a$ with $SC\subset\partial\a$, a chamber $C$ with vertex $v_0$ opposite $SC$ as above.
Let $W_0$ be the wall of $C$ opposite $v_0$ and let $H_0$ be the unique hyperplane in $\a$ containing $W_0$.
Let $H_1,H_2,\dots$ be the consecutive hyperplanes parallel to $H_0$ containing walls numbered in the direction of $SC$.
\begin{center}
\begin{tikzpicture}[scale=.6]
\draw(-2,5)--(0,0)--(2,5);
\draw(-3,1)--(3,1);
\draw(-3,2)--(3,2);
\draw(-3,3)--(3,3);
\draw(-3,4)--(3,4);
\draw(3.5,1)node{$H_0$};
\draw(3.5,2)node{$H_1$};
\draw(3.5,3)node{$H_2$};
\draw(3.5,4)node{$H_3$};
\draw(0,-.3)node{$v_0$};
\end{tikzpicture}
\end{center}
We call the $H_j$ the \e{horizontal hyperplanes} of the cone. Each wall $W$ contained in one of the $H_j$ is called a \e{horizontal wall}.
For given $j$ the \e{cone section} $\Cone(C)_j$ is the closure of the set of all $x\in SC$ which lie above the hyperplane $H_j$.
A wall $W$ contained in the boundary of a cone section but not being horizontal, is called a \e{boundary wall}.
\begin{center}
\begin{tikzpicture}[scale=.6]
\filldraw[gray!30](-3,7.5)--(-1,2.5)--(1,2.5)--(3,7.5);
\draw(-3,7.5)--(-1,2.5)--(1,2.5)--(3,7.5);
\draw[very thick](2,5)--(2.4,6);
\draw(3,5.3)node{$W$};
\draw(0,1)node{A cone section with a boundary wall $W$.};
\end{tikzpicture}
\end{center}

\begin{definition}
We say that the cusp $SC$ is a \e{$\Ga$-cusp} or is \e{$\Ga$-cuspidal}, if there exists $j$ such that the cone section $\Cone(C)_j$ maps injectively into $\Ga\bs X$.
The image of such a cone section in $\Ga\bs X$ is called a \e{cusp section}.

The lattice $\Ga$ is called a \e{cuspidal lattice} if $\Ga\bs X$ is the union of a finite complex $(\Ga\bs X)_\fin$ and finitely many pairwise disjoint cusp sections.
\end{definition}

\begin{remark}
Let $\Ga$ be a cuspidal lattice and let $\Cone(C)$ be a $\Ga$-cusp.
Let $j\in\N$ be such that $\Cone(C)_j$ injects into $\Ga\bs X$.
Then the following holds:
\begin{enumerate}[\quad\rm (a)]
\item For each boundary wall $W$ of $\Cone(C)_j$ the point-wise stabilizer $\Ga_W$ acts transitively on the set of all chambers bounded by $W$.
\item For every wall $W$ inside $\Cone(C)_j$, horizontal or not,  one side of $W$ is facing the point $v_0$,  call this the $-$ side and the other the $+$ side.
The chambers bounded by $W$ fall into two orbits under $\Ga_W$. One orbit constitutes the $-$ side, the other the $+$ side.
Let $q_\pm=q_\pm(W)$ denote either cardinality, so $q_-+q_+$ is the number of chambers bounded by $\tilde W$, or its \e{valency}.
\end{enumerate}
\end{remark}

\begin{center}
\begin{tikzpicture}[scale=.6]
\draw(-2,5)--(0,0)--(2,5);
\draw(-.5,5)--(.1,3.5);
\draw(-.6,4)node{$q_-$};
\draw(.4,4.4)node{$q_+$};
\end{tikzpicture}
\qquad\qquad
\begin{tikzpicture}[scale=.6]
\draw(-2,5)--(0,0)--(2,5);
\draw(-.6,4)--(.5,4);
\draw(0,3.5)node{$q_-$};
\draw(0,4.4)node{$q_+$};
\end{tikzpicture}
\end{center}

\begin{definition}
We say that the cusp $SC$ is \e{periodic}, if
\begin{enumerate}[\quad\rm (a)]
\item $q_+=1$ for every wall, 
\item the sequence $j\mapsto q_-(H_j)$ is eventually periodic and
\item the numbers $q_-(W)$ only depend on the parallelity class of the non-horizontal wall $W$ which is sufficiently far away from the boundary.
\end{enumerate}
\end{definition}

\begin{definition}
By a \e{cusp section} we mean the part of a periodic cone, which lies above one $H_{j_0}$, such that the sequences $j\mapsto q_\pm(H_j)$ are periodic in this section.
We say that the tree lattice $\Ga$ is \e{cuspidal}, if $\Ga\bs X$ is the union of a finite complex $(\Ga\bs X)_\fin$ and finitely many cusp sections.
\end{definition}

For each $k\in\N_0^d(\La_0^+)$ we get an operator $T_k:P(\CC)\to P(\CC)$ which commutes with the $\Ga$-action and so induces an operator $T_{k,\Ga}:P(\Ga\bs\CC)\to P(\Ga\bs\CC)$.
An operator $A: P(\Ga\bs\CC)\to P(\Ga\bs\CC)$ is said to be \e{traceable}, if the limit $\lim_F\tr A_F$ exists, where $F$ ranges over the directed set of all finite subsets of $\Ga\bs\CC$ and $A_F$ is the operator
$$
P(F)\hookrightarrow P(\Ga\bs\CC)\tto A P(\Ga\bs\CC)\twoheadrightarrow P(F).
$$
The operator $T_{k,\Ga}$ is, if the dimension is $\ge 2$ in general not traceable, due to the possible existence of ``$\infty$-cycles'' defined as follows.
For a finite set $F\subset \Ga\bs\CC$ we write $\tr(T_{k,\Ga,F})=\sum_{C\in F}\sp{T_{k,\Ga}C,C}$ and a given $C\in F$ can only have a non-zero contribution $\sp{T_{k,\Ga}C,C}$, if $C$ has a pre-image $\tilde C\in\CC$ and there exists $\ga\in\Ga$ such that $\ga\tilde C$ is in position $k$ to $\tilde C$. Then $\ga$ is hyperbolic and $\tilde C$ lies in the distance minimizing set $\Min(\ga)$.
In particular, each geodesic joining a point in $\tilde C$ with its counterpart in $\ga\tilde C$ projects down in $\Ga\bs X$ to a closed geodesic, where we allow closed geodesics to bounce off walls. By a homotopy of closed geodesics we mean a homotopy only passing through closed geodesics.
We say that a closed geodesic is an \e{$\infty$-cycle}, if its homotopy class in $\Ga\bs X$ is not compact.
So for instance, an $\infty$-cycle may occur if a geodesic is orthogonal to the outer wall of a cusp section, is bounced off from it and a wall parallel to it in the interior as in the picture.
\begin{center}
\begin{tikzpicture}
\draw(-2,5)--(0,0)--(2,5);
\draw(-.5,1.4)--(1,5);
\draw[<->,very thick](0,2.2)--(.6,1.9);
\end{tikzpicture}
\end{center}

For $C\in\CC$ we denote
$$
T_k^0C=\sum_{C'}C',
$$
where here the sum runs over all $C'\in\CC$ in position $k$ to $C$, such that the geodesic between a point in $C$ and its counterpart in $C'$ does not project down to an $\infty$-cycle in $\Ga\bs X$.
Clearly, $T_k^0$ depends on $\Ga$ although our notation doesn't reflect this.
We denote by $T_{k,\Ga}^0$ its restriction to $P(\CC)^\Ga$.

\begin{lemma}
The limit
$$
\tr(T_{k,\Ga}^0)=\lim_F\tr\(T_{k,\Ga,F}^0\)
$$
exists. In fact, the net is eventually stationary, as there exists $F_0$ such that for each $F\supset F_0$ on has $\tr\(T_{k,\Ga,F}^0\)=\tr\(T_{k,\Ga,F_0}^0\)$
\end{lemma}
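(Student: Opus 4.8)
By the very definition of the truncated trace, $\tr\bigl(T_{k,\Ga,F}^0\bigr)=\sum_{C\in F}\sp{T_{k,\Ga}^0C,C}$, so the plan is to show that the set $S=\{C\in\Ga\bs\CC:\sp{T_{k,\Ga}^0C,C}\ne 0\}$ is finite. Granting this, one takes $F_0=S$: for every finite $F\supseteq F_0$ one gets $\tr\bigl(T_{k,\Ga,F}^0\bigr)=\sum_{C\in F\cap S}\sp{T_{k,\Ga}^0C,C}=\tr\bigl(T_{k,\Ga,F_0}^0\bigr)$, and since the finite subsets of $\Ga\bs\CC$ containing $F_0$ are cofinal in the directed set, the net is eventually stationary and the limit exists.

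Next I would recall, as in the discussion preceding the lemma, the shape of a nonzero diagonal term. If $\sp{T_{k,\Ga}^0C,C}\ne 0$, then $C$ has a lift $\tilde C\in\CC$ and there is $\ga\in\Ga$ with $\ga\tilde C$ in relative position $k$ to $\tilde C$; such a $\ga$ is hyperbolic, $\tilde C\subset\Min(\ga)$, and $\ga$ translates along the parallel lines filling $\Min(\ga)$ (Proposition \ref{prop1.4.4}) by $l(\ga)=d(x_0,\ga x_0)$, where $x_0$ is the barycentre of $\tilde C$; this distance depends only on the relative position $k$, call it $D_k$. Moreover, by the truncation built into $T^0$, the closed geodesic $\bar\delta\subset\Ga\bs X$ obtained by projecting the axis segment $[x_0,\ga x_0]$ is \emph{not} an $\infty$-cycle. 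Since the parallel axes of $\ga$ sweep out all of $\Min(\ga)$ and project to closed geodesics freely homotopic to $\bar\delta$, the image of $\Min(\ga)$ in $\Ga\bs X$ is contained in the free homotopy class of $\bar\delta$, which by hypothesis is compact; in particular the image of $\Min(\ga)$ is relatively compact.

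The heart of the argument is then to show that such a $C$ must lie near the finite core. Write $\Ga\bs X=(\Ga\bs X)_\fin\cup\Sigma_1\cup\dots\cup\Sigma_m$, each $\Sigma_i$ being the injective image of a cone section $\Cone(C_i)_{j_i}$, and suppose $C$ lies in $\Sigma_i$ with its lift $\tilde C$ at height more than $D_k$ above the separating hyperplane $H_{j_i}$. The relative position $k$ lies in $\N_0^d(\La_0^+)$, hence is strictly positive, so $\ga\tilde C=\tilde C(k)$ sits in the dominant cone of $\tilde C$. Applying $\ga$ (or $\ga^{-1}$, after orienting matters so that the dominant cone of $\tilde C$ points towards the cusp $SC$) repeatedly and using the defining inclusion $\Cone(D(k))\subseteq\Cone(D)$, one finds that the corresponding half of the $\ga$-axis through $x_0$ marches monotonically towards $SC$ and, for large parameter, runs inside $\Cone(C_i)_{j_i}$. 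As $\Cone(C_i)_{j_i}$ injects into $\Ga\bs X$, the image of that half-axis is a genuine non-compact ray lying inside the image of $\Min(\ga)$, contradicting the relative compactness just obtained. Hence every $C\in S$ lies within distance $D_k$ of $(\Ga\bs X)_\fin$, i.e.\ inside a fixed compact subset of $\Ga\bs X$ (the core together with the cusp sections truncated at bounded height). Since $X$ is locally finite, this subset contains only finitely many chambers, so $S$ is finite.

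The step I expect to cost the most work is the monotonicity claim in the last paragraph: one must verify, keeping track of the type normalisation (the type-$0$ vertex special, $\La_0$ the type-$0$ lattice) and of the two possible orientations of the dominant cone of a chamber relative to the ambient cusp cone, that starting from a chamber deep in $\Cone(C_i)_{j_i}$ the positions $k$ (with $k$ strictly positive) push the $\ga$-axis steadily towards the cusp point and never back down past $H_{j_i}$, so that the relevant half-axis genuinely escapes to the cusp inside the injecting cone section. Everything else reduces to convexity within a single apartment and local finiteness of $X$.
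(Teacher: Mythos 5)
Your reduction to the finiteness of the set $S$ of chambers with nonvanishing diagonal entry is fine, as is the observation that a nonzero entry forces a hyperbolic $\ga\in\Ga$ with $\tilde C\subset\Min(\ga)$ and translation length $D_k$ determined by $k$. The genuine gap is exactly at the step you flagged: the monotonicity claim is false in general. The translation vector of $\ga$ lies in the open dominant cone of $\tilde C$, and as $\tilde C$ ranges over the chambers deep inside the sector $\Cone(C_i)$ this cone is an arbitrary finite-Weyl-group translate of the sector's own direction. Only when that translate is the identity (resp.\ the longest element) does the forward (resp.\ backward) half-axis stay asymptotically inside the sector; for every other orientation the axis meets $\Cone(C_i)_{j_i}$ in a \emph{bounded} segment and exits through the non-horizontal boundary walls at both ends. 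For such $\ga$ you produce no unbounded subset of $\Min(\ga)$ inside the injecting cone section, hence no contradiction with relative compactness of the homotopy class. What survives of your argument without further input is only that the segment $[x_0,\ga x_0]$ cannot lie entirely in the injecting cone section (otherwise $x_0=\ga x_0$), so $x_0$ is within $D_k$ of the \emph{boundary} of $\Cone(C_i)_{j_i}$; but that boundary contains the boundary walls, which are themselves non-compact, so infinitely many candidate chambers remain unexcluded.

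These "sideways" classes are precisely the ones whose closed geodesics reflect off the boundary walls in $\Ga\bs X$, i.e.\ the situation the $\infty$-cycle discussion is designed for, and the paper's proof goes through them rather than around them: a closed geodesic that is not an $\infty$-cycle either reaches the compact core or is a billiard trajectory bouncing off \emph{all} wall families of the cusp section (hence confined near its bottom); since its length is controlled by $|k|$, the chamber must lie within distance $|k|$ of the core, and local finiteness finishes the argument. To repair your route you would have to add this dichotomy: show that when the axis is not (anti)parallel to the cusp direction, the reflected trajectory either fails to bounce off some wall family — in which case the homotopy class sweeps out arbitrarily high horospheres and the geodesic is an $\infty$-cycle, using the transitivity statements of the Remark for chambers across boundary walls — or bounces off all of them and is therefore trapped near $H_{j_i}$. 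As it stands, the proposal only disposes of the aligned case.
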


\begin{proof}
Every closed geodesic, which is not an $\infty$-cycle, has to either pass through the compact core, or bounce off at all walls of a cusp section. If this geodesic then is to leave the set $F_0$ of all chambers in distance of at least $|k|$ from the compact core, then it has length $>|k|$ which means that it does not contribute to the trace of $T^0_{k,\Ga}$.
\end{proof}

\begin{definition}
Set
$$
Z_\Ga(u)=\sum_ku^k\tr\(T^0_{k,\Ga}\).
$$
Then $Z_\Ga(u)$ count the ``interesting'' closed geodesics, i.e., those which are not $\infty$-cycles.
\end{definition}

\begin{lemma}
For $u\in\C^d$ small enough the series $Z_\Ga(u)$ converges.
\end{lemma}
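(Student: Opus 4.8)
The strategy is to bound $\big|\tr(T^0_{k,\Ga})\big|$ exponentially in $|k|:=k_1+\dots+k_d$ and then recognise $Z_\Ga(u)$ as dominated by a product of geometric series. Concretely, I aim to produce constants $C,r>0$, independent of $k$, with $\big|\tr(T^0_{k,\Ga})\big|\le C\,r^{|k|}$; granting this, for $u\in\C^d$ with $|u_j|<1/r$ for all $j$ one gets
\[
\sum_k|u^k|\,\big|\tr(T^0_{k,\Ga})\big|\ \le\ C\sum_{k\in\N_0^d}\prod_{j=1}^d(r|u_j|)^{k_j}\ =\ C\prod_{j=1}^d\frac{1}{1-r|u_j|}\ <\ \infty ,
\]
so the series converges absolutely and locally uniformly on $\{u\in\C^d:|u_j|<1/r\}$.

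For the exponential bound I would combine two ingredients. First, a counting estimate from local finiteness: writing $q$ for a global bound on the number of chambers adjacent to a given chamber across its walls (finite since $X$ is locally finite), the ball of radius $m$ in the gallery metric (the minimal number of walls one must cross) about any chamber contains at most $q^{m+1}$ chambers; since a chamber in relative position $k$ to $C$ lies at gallery distance at most $c_1|k|$ from $C$ for a fixed constant $c_1$, there are at most $q^{c_1|k|+1}$ chambers of $\CC$ in relative position $k$ to a fixed lift $\tilde C$. In particular, for every $C\in\Ga\bs\CC$ one has $\big|\langle T^0_{k,\Ga}C,C\rangle\big|\le q^{c_1|k|+1}$, because that coefficient equals the number of chambers $D$ in the orbit $\Ga\tilde C$ with $\tilde C$ in relative position $k$ to $D$. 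Second, the geometric confinement already established in the previous lemma: a closed geodesic contributing to $\tr(T^0_{k,\Ga})$ has length $\le c_1|k|$, so, not being an $\infty$-cycle, it cannot reach gallery distance more than $c_1':=c_1+\operatorname{diam}(\Ga\bs X)_\fin$ from the finite core $(\Ga\bs X)_\fin$ — otherwise the excursion out and back would already exceed its length. Hence $\tr(T^0_{k,\Ga})=\tr(T^0_{k,\Ga,F_0})$, where $F_0$ is the set of chambers within gallery distance $c_1'|k|$ of the core.

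Putting these together, $F_0$ is covered by the $N_0:=\#(\Ga\bs X)_\fin$ gallery balls of radius $c_1'|k|$ about the chambers of the core, so $|F_0|\le N_0\,q^{c_1'|k|+1}$, and therefore
\[
\big|\tr(T^0_{k,\Ga})\big|\ =\ \Big|\sum_{C\in F_0}\langle T^0_{k,\Ga}C,C\rangle\Big|\ \le\ N_0\,q^{2}\,q^{(c_1+c_1')|k|}\ =\ C\,r^{|k|},\qquad r:=q^{c_1+c_1'},
\]
which is the required estimate, and the lemma follows.

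The step I expect to be the main obstacle is the geometric confinement claim: that a non-$\infty$-cyclic closed geodesic of bounded length must stay inside a gallery ball of radius linear in $|k|$ about the core, in particular that it cannot drift indefinitely far up a cusp section while remaining closed and compactly homotopic. This is exactly what the preceding lemma on the eventual stationarity of $\tr(T^0_{k,\Ga,F})$ provides, so the real work here is to read off from that proof an explicit linear-in-$|k|$ radius for $F_0$; everything else is the elementary $q^m$-growth of gallery balls together with the geometric-series summation above.
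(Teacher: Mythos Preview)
Your argument is correct and follows the same two-step outline as the paper: use the confinement established in the preceding lemma to restrict the trace to a finite set $F_0$ of chambers at bounded distance from the compact core, then multiply a bound on $|F_0|$ by a per-chamber bound on $\langle T^0_{k,\Ga}C,C\rangle$.

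The one substantive difference is sharpness. The paper exploits the fact that a cusp section is, chamber-combinatorially, a cone in Euclidean $d$-space, so the number of chambers of $\Ga\bs X$ within distance $|k|$ of the core grows only like a power of $|k|$; this yields a polynomial bound on $|\tr(T^0_{k,\Ga})|$. You instead invoke only the crude gallery-ball estimate $q^{m+1}$, valid in any locally finite complex, and obtain an exponential bound $C\,r^{|k|}$. Both suffice for the lemma as stated, since it only asks for convergence for \emph{some} small $u$; the paper's polynomial bound would give a larger explicit domain of convergence, while your exponential bound is more elementary and avoids appealing to the Euclidean geometry of the cusps.
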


\begin{proof}
A consequence of the proof of the last Lemma is that $|\tr\(T_{k,\Ga}^0\)|$ is bounded above by the valency $(q+1)$ times the number of chambers in $\Ga\bs X$ which are in distance $\le k$ to the compact core. This number grows like a power of $|k|$ at most.
The claim follows.
\end{proof}

{\bf Conjecture.} For a non-cocompact lattice $\Ga$, the zeta-function $Z_{\Ga}(u)$ is a rational function in $u\in\C^d$.

\subsection*{An example}
By a Theorem of Tamagawa \cite{Tamagawa}, every lattice in a semi-simple group over a $p$-adic field of positive characteristic is cocompact, so in order to get a non-cocompact lattice one has to look at positive characteristic. 
We wish to study the following situation. Suppose that $\mathbb{F}_{q}$ is a finite field of cardinality $q$ and $K:=\mathbb{F}_{q}((t))$ is a non-archimedean local field of positive characteristic. Suppose that $G$ is an absolutely almost simple linear algebraic group which is defined and split over $\mathbb{F}_{q}$ (though we shall be considering $G(K)$ and the associated Bruhat-Tits building which we shall denote by $\Delta$). Denote by $\omega$ the valuation of $K$ with $\omega(t^n.u)=n$ whenever $u$ is a unit in the ring $K[[t]]$, and denote by $\mathcal{O}$ the valuation ring for the valuation $\omega$ and denote by $\mathcal{P}$ the unique maximal ideal. Denote by $T$ a maximal torus in $G$, defined over $\mathbb{F}_{q}$, and by $\mathcal{A}$ the corresponding apartment in $\Delta$. Let $\Phi$ be the system of roots of $G$ with respect to $T$ and fix a choice of a system of simple roots for $\Phi$. Denote by $v$  the vertex of $\mathcal{A}$ corresponding to $G(\mathcal{O})$ and let $\mathcal{Q}$ be the cone with vertex $v$ associated to our chosen system of simple roots. Denote by $C$ the chamber of $\mathcal{Q}$ adjacent to $v$. Let $\Gamma:=G(\mathbb{F}_{q}[1/t])$, a non-cocompact arithmetic lattice in $G(K)$. 
\bigskip

In \cite{Soule77} Soul\'e establishes that $\mathcal{Q}$ is a simplicial fundamental domain for the action of $\Gamma$ on $\Delta$. We are interested in exploring questions of the form: given a stabilizer of some simplex in $\mathcal{Q}$, of codimension one, in the lattice $\Gamma$, what are the sizes of the orbits of this stabilizer on the set of adjacent chambers? 

\bigskip

It is possible to derive an answer to this question from a lemma of Soul\'e in \cite{Soule77}. In that paper it is demonstrated that, for any $x\in\mathcal{Q}$, if we denote by $[x)$ the ray emanating from $x$ extending the straight line segment from $v$ to $x$, then the stabilizer of $x$ in $\Gamma$ is equal to the point-wise stabilizer of $[x)$ in $\Gamma$. So if we consider a simplex of codimension one $e$ in $\mathcal{Q}$ and consider the action of the point-wise stabilizer of $e$ in $\Gamma$ on the space of chambers adjacent to $e$, there will be two orbits: one of size one consisting of the chamber on the opposite side to $e$ of the vertex $v$, and one of size $q$ where $q$ is the cardinality of the residue field consisting of all the remaining chambers adjacent to $e$. This can be seen because it follows from Soul\'e's lemma that the point-wise stabilizer in $\Gamma$ of the chamber on the opposite side to $e$ of $v$ is equal to the point-wise stabilizer of $e$ in $\Gamma$, so the action of the stabilizer on that chamber must fix the chamber. The remaining chambers must form a single orbit because the cone $\mathcal{Q}$ is a simplicial fundamental domain for the action of $\Gamma$. Soul\'e only deals with the case where $G$ is split but Margaux in \cite{Margaux15} shows that this lemma of Soul\'e's also generalizes to the non-split case.

\begin{bibdiv} \begin{biblist}

\bib{Anan}{article}{
   author={Anantharaman, Nalini},
   author={Zelditch, Steve},
   title={Patterson-Sullivan distributions and quantum ergodicity},
   journal={Ann. Henri Poincar\'e},
   volume={8},
   date={2007},
   number={2},
   pages={361--426},
   issn={1424-0637},
   %review={\MR{2314452 (2008f:58028)}},
   doi={10.1007/s00023-006-0311-7},
}

\bib{Bldgs}{book}{
   author={Abramenko, Peter},
   author={Brown, Kenneth S.},
   title={Buildings},
   series={Graduate Texts in Mathematics},
   volume={248},
   note={Theory and applications},
   publisher={Springer, New York},
   date={2008},
   pages={xxii+747},
   isbn={978-0-387-78834-0},
   %review={\MR{2439729 (2009g:20055)}},
   doi={10.1007/978-0-387-78835-7},
}

\bib{BallmannBrin}{article}{
   author={Ballmann, Werner},
   author={Brin, Michael},
   title={Orbihedra of nonpositive curvature},
   journal={Inst. Hautes \'Etudes Sci. Publ. Math.},
   number={82},
   date={1995},
   pages={169--209 (1996)},
   issn={0073-8301},
   %review={\MR{1383216 (97i:53049)}},
}

\bib{Bass}{article}{
   author={Bass, Hyman},
   title={The Ihara-Selberg zeta function of a tree lattice},
   journal={Internat. J. Math.},
   volume={3},
   date={1992},
   number={6},
   pages={717--797},
   issn={0129-167X},
   %review={\MR{1194071 (94a:11072)}},
   doi={10.1142/S0129167X92000357},
}

\bib{Brid}{book}{
   author={Bridson, Martin R.},
   author={Haefliger, Andr{\'e}},
   title={Metric spaces of non-positive curvature},
   series={Grundlehren der Mathematischen Wissenschaften [Fundamental
   Principles of Mathematical Sciences]},
   volume={319},
   publisher={Springer-Verlag, Berlin},
   date={1999},
   pages={xxii+643},
   isbn={3-540-64324-9},
   %review={\MR{1744486 (2000k:53038)}},
   doi={10.1007/978-3-662-12494-9},
}

\bib{BourbInt}{book}{
   author={Bourbaki, Nicolas},
   title={Integration. II. Chapters 7--9},
   series={Elements of Mathematics (Berlin)},
   note={Translated from the 1963 and 1969 French originals by Sterling K.
   Berberian},
   publisher={Springer-Verlag, Berlin},
   date={2004},
   pages={viii+326},
   isbn={3-540-20585-3},
   %review={\MR{2098271 (2005f:28001)}},
}

\bib{BruhatTits}{article}{
   author={Bruhat, F.},
   author={Tits, J.},
   title={Groupes r\'{e}ductifs sur un corps local},
   language={French},
   journal={Inst. Hautes \'{E}tudes Sci. Publ. Math.},
   number={41},
   date={1972},
   pages={5--251},
   issn={0073-8301},
   %review={\MR{0327923}},
}

\bib{torus}{article}{
   author={Deitmar, Anton},
   title={Torus actions on compact quotients},
   journal={J. Lie Theory},
   volume={6},
   date={1996},
   number={2},
   pages={179--190},
   issn={0949-5932},
   %review={\MR{1424631 (98m:22015)}},
}

\bib{padgeom}{article}{
   author={Deitmar, Anton},
   title={Geometric zeta-functions on $p$-adic groups},
   journal={Math. Japon.},
   volume={47},
   date={1998},
   number={1},
   pages={1--17},
   issn={0025-5513},
   review={\MR{1606279 (98m:11130)}},
}

\bib{geom}{article}{
   author={Deitmar, Anton},
   title={Geometric zeta functions of locally symmetric spaces},
   journal={Amer. J. Math.},
   volume={122},
   date={2000},
   number={5},
   pages={887--926},
   issn={0002-9327},
   %review={\MR{1781924 (2002c:11112)}},
}

\bib{class}{article}{
   author={Deitmar, Anton},
   title={Class numbers of orders in cubic fields},
   journal={J. Number Theory},
   volume={95},
   date={2002},
   number={2},
   pages={150--166},
   issn={0022-314X},
   %review={\MR{1924095}},
}

\bib{HR}{article}{
   author={Deitmar, A.},
   title={A prime geodesic theorem for higher rank spaces},
   journal={Geom. Funct. Anal.},
   volume={14},
   date={2004},
   number={6},
   pages={1238--1266},
   issn={1016-443X},
   %review={\MR{2135166}},
   doi={10.1007/s00039-004-0490-7},
}

\bib{classNC}{article}{
   author={Deitmar, Anton},
   author={Hoffmann, Werner},
   title={Asymptotics of class numbers},
   journal={Invent. Math.},
   volume={160},
   date={2005},
   number={3},
   pages={647--675},
   issn={0020-9910},
   %review={\MR{2178705}},
   doi={10.1007/s00222-004-0423-y},
}

\bib{DiscLef}{article}{
   author={Deitmar, Anton},
   title={A discrete Lefschetz formula},
   journal={Topology Appl.},
   volume={153},
   date={2006},
   number={14},
   pages={2363--2381},
   issn={0166-8641},
   %review={\MR{2243717 (2008c:11079)}},
   doi={10.1016/j.topol.2005.08.012},
}

\bib{HRLefschetz}{article}{
   author={Deitmar, Anton},
   title={A higher rank Lefschetz formula},
   journal={J. Fixed Point Theory Appl.},
   volume={2},
   date={2007},
   number={1},
   pages={1--40},
   issn={1661-7738},
   %review={\MR{2336497 (2008k:58050)}},
   doi={10.1007/s11784-007-0028-3},
}

\bib{HA2}{book}{
   author={Deitmar, Anton},
   author={Echterhoff, Siegfried},
   title={Principles of harmonic analysis},
   series={Universitext},
   edition={2},
   publisher={Springer, New York},
   date={2014},
   pages={xiv+332},
   isbn={978-3-319-05791-0},
   isbn={978-3-319-05792-7},
   %review={\MR{3289059}},
   doi={10.1007/978-3-319-05792-7},
}

\bib{HRpadgeom}{article}{
   author={Deitmar, Anton},
   author={Kang, Ming-Hsuan},
   title={Geometric zeta functions for higher rank $p$-adic groups},
   journal={Illinois  J. Math.},
   volume={58},
   date={2014},
   pages={719--738},
}

\bib{treelattice}{article}{
   author={Deitmar, Anton},
   author={Kang, Ming-Hsuan},
   title={Tree-lattice zeta functions and class numbers},
   journal={Michigan Math. J.},
   volume={67},
   date={2018},
   pages={617--645},
}

\bib{PGTB}{article}{
   author={Deitmar, Anton},
   author={McCallum, Rupert},
   title={A prime geodesic theorem for higher rank buildings},
   journal={Kodai Math. J.},
   volume={41},
   date={2018},
   number={2},
   pages={440--455},
   issn={0386-5991},
   review={\MR{3824861}},
   doi={10.2996/kmj/1530496852},
}

\bib{Garrett}{book}{
   author={Garrett, Paul},
   title={Buildings and classical groups},
   publisher={Chapman \& Hall, London},
   date={1997},
   pages={xii+373},
   isbn={0-412-06331-X},
   %review={\MR{1449872}},
   doi={10.1007/978-94-011-5340-9},
}

\bib{HumCox}{book}{
   author={Humphreys, James E.},
   title={Reflection groups and Coxeter groups},
   series={Cambridge Studies in Advanced Mathematics},
   volume={29},
   publisher={Cambridge University Press, Cambridge},
   date={1990},
   pages={xii+204},
   isbn={0-521-37510-X},
   %review={\MR{1066460 (92h:20002)}},
   doi={10.1017/CBO9780511623646},
}

\bib{Ihara1}{article}{
   author={Ihara, Yasutaka},
   title={Discrete subgroups of ${\rm PL}(2,\,k_{\wp })$},
   conference={
      title={Algebraic Groups and Discontinuous Subgroups (Proc. Sympos.
      Pure Math., Boulder, Colo., 1965)},
   },
   book={
      publisher={Amer. Math. Soc.},
      place={Providence, R.I.},
   },
   date={1966},
   pages={272--278},
   %review={\MR{0205952 (34 \#5777)}},
}

\bib{Ihara2}{article}{
   author={Ihara, Yasutaka},
   title={On discrete subgroups of the two by two projective linear group
   over ${\germ p}$-adic fields},
   journal={J. Math. Soc. Japan},
   volume={18},
   date={1966},
   pages={219--235},
   issn={0025-5645},
   %review={\MR{0223463 (36 \#6511)}},
}

\bib{Juhl}{book}{
   author={Juhl, Andreas},
   title={Cohomological theory of dynamical zeta functions},
   series={Progress in Mathematics},
   volume={194},
   publisher={Birkh\"auser Verlag},
   place={Basel},
   date={2001},
   pages={x+709},
   isbn={3-7643-6405-X},
   %review={\MR{1808296 (2001j:37046)}},
   doi={10.1007/978-3-0348-8340-5},
}

\bib{Kord}{article}{
   author={Kordyukov, Yu. A.},
   title={Index theory and noncommutative geometry on manifolds with
   foliations},
   language={Russian, with Russian summary},
   journal={Uspekhi Mat. Nauk},
   volume={64},
   date={2009},
   number={2(386)},
   pages={73--202},
   issn={0042-1316},
   translation={
      journal={Russian Math. Surveys},
      volume={64},
      date={2009},
      number={2},
      pages={273--391},
      issn={0036-0279},
   },
   %review={\MR{2530918 (2010f:58026)}},
   %doi={10.1070/RM2009v064n02ABEH004616},
}

\bib{Margaux15}{article}{
   author={Margaux, Benedictus},
   title={The structure of the group $G(k[t])$: variations on a theme of
   Soul\'e},
   journal={Algebra Number Theory},
   volume={3},
   date={2009},
   number={4},
   pages={393--409},
   issn={1937-0652},
   %review={\MR{2525556 (2010h:20105)}},
   doi={10.2140/ant.2009.3.393},
}

\bib{Parkinson}{article}{
   author={Parkinson, James},
   title={Buildings and Hecke algebras},
   journal={J. Algebra},
   volume={297},
   date={2006},
   number={1},
   pages={1--49},
   issn={0021-8693},
   review={\MR{2206366 (2006k:20012)}},
   doi={10.1016/j.jalgebra.2005.08.036},
}

\bib{Pierce}{book}{
   author={Pierce, Richard S.},
   title={Associative algebras},
   series={Graduate Texts in Mathematics},
   volume={88},
   note={Studies in the History of Modern Science, 9},
   publisher={Springer-Verlag, New York-Berlin},
   date={1982},
   pages={xii+436},
   isbn={0-387-90693-2},
   %review={\MR{674652 (84c:16001)}},
}

\bib{Reiner}{book}{
   author={Reiner, Irving},
   title={Maximal orders},
   series={London Mathematical Society Monographs. New Series},
   volume={28},
   note={Corrected reprint of the 1975 original;
   With a foreword by M. J.\ Taylor},
   publisher={The Clarendon Press, Oxford University Press, Oxford},
   date={2003},
   pages={xiv+395},
   isbn={0-19-852673-3},
   %review={\MR{1972204 (2004c:16026)}},
}

\bib{Sarnak}{article}{
   author={Sarnak, Peter},
   title={Class numbers of indefinite binary quadratic forms},
   journal={J. Number Theory},
   volume={15},
   date={1982},
   number={2},
   pages={229--247},
   issn={0022-314X},
   %review={\MR{675187}},
   doi={10.1016/0022-314X(82)90028-2},
}

\bib{Serre}{book}{
   author={Serre, Jean-Pierre},
   title={Trees},
   series={Springer Monographs in Mathematics},
   note={Translated from the French original by John Stillwell;
   Corrected 2nd printing of the 1980 English translation},
   publisher={Springer-Verlag, Berlin},
   date={2003},
   pages={x+142},
   isbn={3-540-44237-5},
   review={\MR{1954121 (2003m:20032)}},
}

\bib{Soule77}{article}{
   author={Soul{\'e}, Christophe},
   title={Chevalley groups over polynomial rings},
   conference={
      title={Homological group theory},
      address={Proc. Sympos., Durham},
      date={1977},
   },
   book={
      series={London Math. Soc. Lecture Note Ser.},
      volume={36},
      publisher={Cambridge Univ. Press, Cambridge-New York},
   },
   date={1979},
   pages={359--367},
   %review={\MR{564437 (81g:20080)}},
}

\bib{Tamagawa}{article}{
   author={Tamagawa, Tsuneo},
   title={On discrete subgroups of $p$-adic algebraic groups},
   conference={
      title={Arithmetical Algebraic Geometry},
      address={Proc. Conf. Purdue Univ.},
      date={1963},
   },
   book={
      publisher={Harper \& Row, New York},
   },
   date={1965},
   pages={11--17},
   %review={\MR{0195864}},
}

\end{biblist} \end{bibdiv}

{\small Anton Deitmar \& Rupert McCallum:\\
Mathematisches Institut, Auf der Morgenstelle 10, 72076 T\"ubingen, Germany

Min-Hsuan Kang:\\
Department of Applied Mathematics, National Chiao Tung University,1001 Ta Hsueh Road, Hsinchu, Taiwan 30010, ROC}

\end{document}